\theoremstyle{plain}% default
\newtheorem{thm}{Theorem}[section]
\newtheorem{lem}{Lemma}[section]
\newtheorem{cor}{Corollary}[section]
\newtheorem{assmp}{Assumption}[section]
\theoremstyle{remark}
\newtheorem{exm}{Example}[section]
\newtheorem{rmk}{Remark}[section]
\begin{document}

\newcommand{\argmin}{\mathrm{argmin}}
\newcommand{\argmax}{\mathrm{argmax}}
\newcommand{\diag}{\mathrm{diag}}
\newcommand{\ddiag}{\mathrm{ddiag}}
\newcommand{\rank}{\mathrm{rank}}
\newcommand{\Id}{\mathrm{Id}}
\newcommand{\Arg}{\mathrm{Arg}}
\newcommand{\bb}{\mathbb}
\newcommand{\SVD}{\mathrm{SVD}}
\newcommand{\etheta}{e^{i \theta}}
\newcommand{\mtheta}{\min_{\theta}}
\newcommand{\poly}{\mathrm{poly}}
\newcommand{\polylog}{\text{polylog } n}
\newcommand{\diam}{\mathrm{diam}}
\newcommand{\rad}{\mathrm{rad}}

\renewcommand{\Re}{\operatorname{Re}}
\renewcommand{\Im}{\operatorname{Im}}
\renewcommand{\hat}{\widehat}
\renewcommand{\cal}{\mathcal}
\renewcommand{\tilde}{\widetilde}

\makeatletter
\def\@biblabel#1{\hspace*{-\labelsep}}
\makeatother

\title{Eigenvector Under Random Perturbation: A Nonasymptotic Rayleigh-Schr\"{o}dinger Theory}
\author{Yiqiao Zhong \thanks{E-mail: \textit{yiqiaoz@princeton.edu}}\vspace{0.1in}\\
  Princeton University}

\date{}
\maketitle
\sloppy

\begin{abstract}
Rayleigh-Schr\"{o}dinger perturbation theory is a well-known theory in quantum mechanics and it offers useful characterization of eigenvectors of a perturbed matrix. Suppose $A$ and perturbation $E$ are both Hermitian matrices, $A^t = A + tE$, $\{\lambda_j\}_{j=1}^n$ are eigenvalues of $A$ in descending order, and $u_1, u^t_1$ are leading eigenvectors of $A$ and $A^t$. Rayleigh-Schr\"{o}dinger theory shows asymptotically, $\langle u^t_1, u_j \rangle \propto t / (\lambda_1 - \lambda_j)$ where $ t = o(1)$. However, the asymptotic theory does not apply to larger $t$; in particular, it fails when $ t \| E \|_2 > \lambda_1 - \lambda_2$. In this paper, we present a nonasymptotic theory with $E$ being a random matrix. We prove that, when $t = 1$ and $E$ has independent and centered subgaussian entries above its diagonal, with high probability, 
\begin{equation*}
| \langle u^1_1, u_j \rangle | = O(\sqrt{\log n} / (\lambda_1 - \lambda_j)),
\end{equation*}
for all $j>1$ simultaneously, under a condition on eigenvalues of $A$ that involves all gaps $\lambda_1 - \lambda_j$. This bound is valid, even in cases where $\| E \|_2 \gg \lambda_1 - \lambda_2$. The result is optimal, except for a log term. It also leads to an improvement of Davis-Kahan theorem.
\end{abstract}

\textbf{Keywords:} Perturbation theory, Eigenvectors of random matrices, Davis-Kahan theorem.

\section{Introduction}\label{sec::intro}
Suppose $A \in \bb{C}^{n \times n}$ is a Hermitian matrix, and its spectral decomposition is given by 
\begin{equation*}
A = U\Lambda U^* = \sum_{j=1}^n \lambda_j u_j u_j^*,
\end{equation*}
where $\Lambda = \diag(\lambda_1,\ldots, \lambda_n)$, $\lambda_1 \ge \lambda_2 \ge \ldots \ge \lambda_n$, and $U := (u_1,\ldots,u_n)$ is a unitary matrix. Suppose $E \in \bb{C}^{n \times n}$ is a random Hermitian matrix, and $\tilde{A} = A + E$ is the perturbed matrix. Let $\tilde{u}_1$ be the leading eigenvector of $\tilde{A}$, which is of primary interest in this paper. To be specific, we want to derive sharp bounds on $\langle \tilde{u}_1, u_j \rangle$.

The study of eigenvector perturbation has proved to be quite useful in many fields, including statistics, applied mathematics, theoretical computer science, quantum mechanics and so on. More often than not, $A$ is an ideal matrix, usually assumed to have a relatively simple structure, and $E$ represents measurement errors, sampling errors or disturbance, generally interpreted as noise. Given the importance of spectral decomposition, it is, thus, desirable to understand how eigenvectors are perturbed. 

For example, in community detection \citep{RohChaYu11}, $A$ is usually a low rank matrix that encodes the information of unknown labels of vertices (which community a vertex belongs to), and spectral algorithms try to recover the label information from an observed random connectivity graph by computing top eigenvectors of $\tilde{A}$ (adjacency matrix, graph Laplacian, etc.). In angular synchronization \citep{Sin11}, in order to estimate unknown angles $\theta_1,\ldots, \theta_n$ from noisy measurements $y_{jk}$ of offsets $\theta_j - \theta_k$, one practical approach is to build symmetric matrix $H = (H_{jk})$ where $H_{jk} = \exp(iy_{jk})$ and then extract the leading eigenvector from $H$.    

A powerful tool for bounding the error $\tilde{u}_1 - u_1$ is Davis-Kahan theorem \citep{DavKah70}. In the simplest form, Davis-Kahan theorem bounds the $\ell^2$ norm of the error as follows:
\begin{equation}\label{ineqn::DK}
\| \tilde{u}_1 - u_1 \|_2 = O\Big( \frac{ \| E \|_2 }{\delta} \Big),
\end{equation}
where $\| E \|_2 = \max_{\| u \|_2 \le 1} \| E u \|_2$ is the matrix operator norm, and $\delta := \lambda_1 - \lambda_2$ is the gap between the first two eigenvalues of $A$. Note the global phase of $\tilde{u}_1$ has to be suitably chosen, since the leading eigenvector is not unique. In general cases where multiplicity of $\lambda_1$ is possibly greater than 1, Davis-Kahan theorem provides a bound for the error measured in terms of eigenspaces. 
%A closely related decomposition is singular value decomposition, and similar results were developed by \cite{Wed72}. 
The Davis-Kahan theorem enjoys great popularity in fields such as statistics and computer science, where it is commonly employed to study principal component analysis, spectral algorithm and so on \citep{Von07,RohChaYu11, fan2013large}.

Its great applicability has led some people to examine the theorem more carefully, and particularly two interesting questions arise: (i) Is there any bound tighter than (\ref{ineqn::DK})? (ii) Can we characterize the error $\tilde{u}_1 - u_1$ in a more delicate fashion, beyond an $\ell^2$ bound?

For question (i), in general it is impossible to see improvement over (\ref{ineqn::DK}). Rayleigh-Schr\"{o}dinger perturbation theory, as we will soon discuss, gives first-order asymptotic expansion of $\tilde{u}_1 - u_1$ that shows generally $O(\| E \|_2 / \delta)$ is indispensable. In particular, usually one has to require $\delta \gg \| E \|_2$ to make the error $\tilde{u}_1 - u_1$ vanish. To see the necessity of this condition, consider $E = (\lambda_1 - \lambda_2 + \varepsilon) u_2 u_2^*$, where $\varepsilon > 0$; in this example, the perturbation results in a switch of order of the first two eigenvectors, so in general when $\delta$ and $\| E\|_2$ are of the same magnitude, there is no hope for $\|\tilde{u}_1 - u_1 \|_2 \to 0$. One might hope to see the condition $\delta \gg \| E \|_2$ weakened when $E$ is a random matrix. However, even when $E$ is random, it is known that (\ref{ineqn::DK}) cannot be improved in general: in fact, when $A$ has rank 1 and $E$ is Gaussian Orthogonal Ensemble (GOE) or Gaussian Unitary Ensemble (GUE), there is a phase transition: when $E$ is properly scaled such that $\| E \|_2 \to 2$ a.s., we have $|\langle \tilde{u}_1, u_1 \rangle|^2 \to \big( 1 - (1/\lambda_1)^2 \big)_+$ a.s.  \citep{Ben11}.     
%In statistics, when $\| \tilde{u}_1 - u_1 \|_2 \to 0$, $\tilde{u}_1$ is said to be a \textit{consistent} estimator of $u_1$ 

%Originally developed in numerical analysis community, the Davis-Kahan theorem sees widespread application in statistics, computer science, machine learning etc.\ where it is employed to study principal component analysis, spectral algorithm and so on \citep{Von07,RohChaYu11}. Driven by applications such as factor models or community detection, there has also been efforts to develop $\ell^\infty$ perturbation bounds for structured matrices \citep{FanWanZho16}.  \\

Nevertheless, interesting results are obtained by \cite{Vu11} and \cite{ORVuWan13}, in which an improvement over Davis-Kahan theorem is indeed possible when the matrix $A$ is of low rank and $\lambda_1 \gg \delta$. Under favorable regime, $\| \tilde{u}_1 - u_1\|_2 = o_P(1)$ even when $\| E \|_2 \gg \delta$.   These results are strong indications that we should include other eigenvalues in the bound on $\| \tilde{u}_1 - u_1\|_2$, not merely the eigengap $\delta$, especially when the eigenvalues exhibit different scales. Understandably, when the eigenvector perturbation is inhomogeneous,
 %which is indeed the case depending on eigenvalues $\lambda_j$'s, 
we should look at all eigenvalues rather than simply the eigengap $\delta$. 
% when This is an inkling of \textit{inhomogeneity} of eigenvector perturbation, which is a phenomenon not captured by the classical deterministic perturbation results.\\

For question (ii), Rayleigh-Schr\"{o}dinger perturbation theory \citep{Ray96, Sch26}, which is well-known in quantum mechanics, provides asymptotic characterization of the eigenvector perturbation. It also explains, in a sense, the mysteries in \cite{Vu11} and \cite{ORVuWan13}. For $t \in [0,1]$, let $A^t = A + t E$ be an interpolation of $A$ and $\tilde{A}$, and the leading eigenvector of $A^t$ is denoted by $u^{t}_1$ accordingly. When $t=1$, $A^t$ is simply $\tilde{A}$ and $u^{t}_1$ is $\tilde{u}_1$. We can choose the global phase of $u_1^t$ appropriately, such that for any $j > 1$, 
\begin{equation}\label{eqn::RS}
\langle u^t_1 - u_1, u_j \rangle = \langle u^t_1, u_j \rangle = \frac{u_j^* E u_1}{\lambda_1 - \lambda_j} t + o(t).
\end{equation}
The asymptotic result shows the amount of projection of $u^t_1 - u_1$ onto $\mathrm{span}\{u_j\}$ is inversely proportional to $\lambda_1 - \lambda_j$.  Note it leads to (\ref{ineqn::DK}) in the asymptotic sense, since we can use a trivial inequality $\lambda_1 - \lambda_j \ge \delta$ and combine dominant terms in all projections to derive an $\ell^2$ bound. But (\ref{ineqn::DK}) is tight only when these `gaps' $\lambda_1 - \lambda_j$ are comparable. When $\lambda_1 - \lambda_j$ exhibits different scales, a bound such as (\ref{ineqn::DK}) that involves only the eigengap $\delta = \lambda_1 - \lambda_2$, is too crude. Informative and precise as it is, however, (\ref{eqn::RS}) is based on asymptotics, and cannot handle scenarios where $t \| E \|_2 \gg \delta$. This asymptotic theory falls apart when, for example, $E = (\lambda_1 - \lambda_2 + \varepsilon) u_2 u_2^*$, which we have encountered before.

%An insightful but incorrect reasoning shows, by taking $t = 1$, $\langle \tilde{u}_1, u_j \rangle$ should be comparable to $\xi_j / (\lambda_1 - \lambda_j)$, where $\xi_j = u_j^* E u_1 \sim N(0,1)$ when $E$ is a GOE. 

The main message of this paper is to show, when $E$ is a random matrix, the pathological example is rare, and the relation $|\langle \tilde{u}_1, u_j \rangle| \propto 1/(\lambda_1 - \lambda_j)$ is valid even when $\| E \|_2 \gg \delta$. Under a reasonable random regime, which includes GOE and GUE, and a condition on the eigenvalues involving all `gaps' $\lambda_1 - \lambda_j$ versus a single gap $\delta$, we prove
\begin{equation}\label{eqn::introMain}
|\langle \tilde{u}_1, u_j \rangle| = O(\sqrt{\log n} / (\lambda_1 - \lambda_j) ), \qquad \forall \, j > 1
\end{equation}
with high probability, where $O(\cdot)$ hides an absolute constant. The bound is true for all $j>1$ simultaneously. We also prove that the bound (\ref{eqn::introMain}) is tight, and that in general the condition on the eigenvalues cannot be weakened up to some poly-logarithmic term $O(\log^K n)$ (for some $K \in \bb{N}$).

Informally, we summarize our results as follows:
\begin{itemize}
\item $\tilde{u}$ is approximately the solution to a system of linear equations. 
\item The eigengap $\delta := \lambda_1 - \lambda_2$ can be as small as $\polylog$, a vanishing quantity compared with $\| E \|_2$. 
\item Roughly speaking, $|\langle \tilde{u}_1, u_j \rangle|$ is comparable to $1/(\lambda_1  - \lambda_j)$ for all $j > 1$.
\item In many scenarios, our bound improves upon Davis-Kahan theorem.
\end{itemize}

As a byproduct, we also have a result on perturbation of the leading eigenvalue that generalizes Weyl's inequality---see Theorem \ref{thm::eigvalPtb}.

Here is the arrangement of the paper. We first state our nonasymptotic Rayleigh-Schr\"{o}dinger (RS) perturbation result in Section \ref{sec::nonasympRS}. Near-optimality results concerning the condition and tightness of the bound are stated in Section \ref{sec::opt}. Further related works and notational definitions are in Section \ref{sec::related} and Section \ref{sec::notation}. Proofs of main results run from Section \ref{sec::pre} to Section \ref{sec::match}, and supporting lemmas are in the appendix.

%The quest of this paper is to study such inhomogeneity and show the perturbation depends on \textit{all} eigenvalues of $A$, rather than the eigengap $\delta$. In this respect, this paper is very different in nature with the bulk of papers in random matrix literature and statistics, where it is usually assumed that the eigenvalues of $A$ have the same scale, and often $A$ is of low rank.

\section{Main Results}\label{sec::main}
\subsection{Nonasymptotic RS perturbation theory}\label{sec::nonasympRS}
To set up our results, we first introduce the random regime and the assumption on eigenvalues of $A$, as a necessary step toward establishing our nonasymptotic Rayleigh-Schr\"{o}dinger (RS) theory.
\paragraph{Regime of random perturbation} ~\\
We suppose the random matrix $E$, under eigenbasis $\{u_1, u_2,\ldots, u_n\}$, has independent subgaussian variables above its diagonal. To be precise,  denote
\begin{equation*}
\tilde{E} = U^* E U,
\end{equation*}
We suppose $\{ \Re(\tilde{E}_{jk}), \Im(\tilde{E}_{jk}) \}_{j \le k}$ are jointly independent and have zero mean; and for any $j \le k$ the subgaussian norms of $\Re(\tilde{E}_{jk}), \Im(\tilde{E}_{jk})$ satisfy $\| \Re(\tilde{E}_{jk}) \|_{\psi_2} \le 1, \| \Im(\tilde{E}_{jk}) \|_{\psi_2} \le 1$. Here, $\Re(a)$ and $\Im(a)$ denote the real part and imaginary part of any $a \in \bb{C}$, and for a subgaussian random variable $\xi$, the subgaussian norm is defined as $\| \xi \|_{\psi_2} := \sup_{q \ge 1} q^{-1/2} ( \bb{E} |\xi|^q )^{1/q}$.

As a special case, the perturbation $E$ can be a GOE or a GUE, since they are invariant under any unitary matrix $U$. We remark that if we assume $A$ is a diagonal matrix, then we can choose $U$ to be an identity matrix. This means $\tilde{E} = E$, and the independence assumption is directly made on $E$, and also $\langle \tilde{u}_1, u_j \rangle$ is just the $j$'th entry of $\tilde{u}_1$. It is important to state that, in fact, we will not lose anything by assuming $A$ is diagonal, since we can first prove everything for the diagonal case and the general case will follow immediately. 
%Under eigenbasis $\{u_1, u_2,\ldots, u_n\}$, $A$ is a diagonal matrix, and $\langle \tilde{u}_1, u_j \rangle$ is the $j$'th coordinate of $\tilde{u}_1$.
%So we are essentially studying the perturbation of leading eigenvector in each coordinate. 

\paragraph{Condition on eigenvalues} ~\\
Let $d(\lambda) \in \bb{R}^{n-1}$ be a vector with $[d(\lambda)]_j= 1 / (\lambda_1-\lambda_{j+1})$. The vector norm $\| \cdot \|_{p/(p-2)}$ is interpreted as the infinity norm $\| \cdot \|_\infty$ when $p=2$, and as the $1$-norm $\| \cdot \|_1$ when $p = \infty$. Suppose $c_0>0$ is some small absolute constant throughout the paper. 

\begin{assmp}\label{assmp::lambda}
Suppose either of the following: (1) there exists $p \in [2,\infty)$ such that 
\begin{equation}\label{ineqn::eigvalCon}
\sqrt{p\log n}\, n^{1/p}\| d(\lambda) \|_{p/(p-2)} \le c_0, 
\end{equation}
(2)
\begin{equation}\label{ineqn::eigvalCon2}
\log n \,\| d(\lambda) \|_{1} \le c_0.
\end{equation}
\end{assmp}

Informally, we assume $\| d(\lambda) \|_{p/(p-2)} \ll n^{-1/p}$. This assumption incorporates information from all `gaps' $\lambda_1 - \lambda_j$, rather than a single eigengap $\delta = \lambda_1 - \lambda_2$. The vector $d(\lambda)$ is a reasonable quantity to expect since, according to RS perturbation theory, asymptotically $|\langle \tilde{u}_1, u_1 \rangle| \propto [d(\lambda)]_j$. When  $p = 2$, (\ref{ineqn::eigvalCon}) is equivalent to $\delta = \Omega( \sqrt{n \log n} )$, so it regresses to the usual eigengap condition for Davis-Kahan theorem ($\delta \gg \| E \|_2 = O_P(\sqrt{n})$); but for other choice of $p$, this assumption allows for more flexibility. When other `gaps' $\lambda_1 - \lambda_j$ ($j > 2$) are significantly larger, it is possible that $\delta \ll \| E \|_2$. We will soon discuss many interesting examples this assumption encompasses.

We remark that we can only look for $p$ in the range $[2, \log n]$, with the only slight difference being a modification of constant $c_0$. Indeed, when $p = \log n, \| d(\lambda) \|_{p/(p-2)} \asymp \| d(\lambda) \|_1$ and $n^{1/p} \asymp 1$, so there is no need to seek very large $p$.

%This assumption is a relaxation of eigengap assumption that usually appears in analysis of eigenvector perturbation. Instead of assuming the eigengap $\delta := \lambda_1 - \lambda_2$ being sufficiently large, this assumption incorporates all eigenvalue information from $A$. As a special case, the eigengap $\delta$ is assumed to satisfy $\delta = \Omega( \sqrt{n \log n} )$ when $p = 2$, and we recover the usual eigengap assumption.\\

%Though this assmption looks compliated, in Section \ref{sec::main} we will see indeed it encompasses many interesting configuration of eigenvalues.\\

%\paragraph{Highlights of results} ~\\
%Under this assumption, the behavior of eigenvector $\tilde{u}$ has nice characterization under very general conditions, which is informaly summarized as follows:
%\begin{itemize}
%\item $\tilde{u}$ is approximately the solution to a system of linear equations. 
%\item The eigengap $\delta := \lambda_1 - \lambda_2$ can be as small as $\log n$, a vanishing quantity compared with $\| E \|$. 
%\item Roughly speaking, $|\langle \tilde{u}_1, u_j \rangle|$ is comparable to $1/(\lambda_1  - \lambda_j)$ for all $j \neq 1$.
%\item In many scenarios, our bound improves upon Davis-Kahan Theorem.
%\end{itemize}
%
%We also have a side result on the perturbation of top eigenvalues.
%
%\section{Main Results}\label{sec::main}
%Now we state our main results.

Now we are in a position to state the main result.
\begin{thm}\label{thm::main}
Suppose $n \ge 2$ and Assumption \ref{assmp::lambda}. Then, with probability $1 - O(n^{-1})$, 
\begin{equation}\label{ineqn::main1}
|\langle \tilde{u}_1, u_j \rangle| \le C\sqrt{\log n} / (\lambda_1  - \lambda_j), \qquad \forall \, 2 \le j \le n,
\end{equation}
where $C>0$ is an absolute constant. 
\end{thm}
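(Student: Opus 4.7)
The plan is to reduce to the case $A = \Lambda$ diagonal (as permitted by the remark in the paper), derive a self-consistent linear equation for $\tilde u_1$, and extract the leading Rayleigh--Schr\"{o}dinger term via a leave-one-out decoupling. Writing $a := (\tilde u_1)_1$ and $y := \big((\tilde u_1)_j\big)_{j=2}^n$, the eigenvalue equation $\tilde A\tilde u_1 = \tilde\lambda_1 \tilde u_1$ restricted to coordinates $j \ge 2$ reads
\[ (M - F)\, y \;=\; a\, f, \]
where $M = \diag(\tilde \lambda_1 - \lambda_j)_{j=2}^n$, $F$ is the $(n{-}1)\times(n{-}1)$ principal submatrix of $E$ indexed by $\{2,\ldots,n\}$, and $f = (E_{j1})_{j=2}^n$; hence $y = a\,(M - F)^{-1} f$. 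The first-order Rayleigh--Schr\"{o}dinger term $a M^{-1} f$ already exhibits the claimed bound coordinate-wise, since each $E_{j1}$ is subgaussian of unit scale, so the task is to show that the full inverse agrees with this leading term coordinate-wise up to an absolute constant.

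Next I would establish three a priori estimates. First, a refined eigenvalue bound $|\tilde \lambda_1 - \lambda_1| = O(\sqrt{\log n})$ with high probability, which I expect to coincide with Theorem~\ref{thm::eigvalPtb}. Because Assumption~\ref{assmp::lambda} forces every $\lambda_1 - \lambda_j$ ($j\ge 2$) to dominate $\sqrt{\log n}$ (the weakest case, $p = \infty$, already gives $\lambda_1 - \lambda_j \gtrsim \log n$), this yields $\tilde \lambda_1 - \lambda_j \asymp \lambda_1 - \lambda_j$ uniformly in $j$, so $M$ may be replaced by $\diag(\lambda_1 - \lambda_j)_{j\ge 2}$ up to an absolute constant. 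Second, $|a| \ge 1/2$, equivalently $\|y\|_2 \le \sqrt{3}/2$, which follows from a Davis--Kahan-type $\ell^2$ estimate seeded by Assumption~\ref{assmp::lambda}. Third, a crude weighted bound $|y_j| \le C_0/(\lambda_1 - \lambda_j)$ with some (possibly large) constant $C_0$, to serve as the seed for a bootstrap.

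Sharp coordinate-wise control then proceeds by leave-one-out decoupling. For each fixed $j \ge 2$, let $\tilde u_1^{(j)}$ denote the leading eigenvector of the matrix obtained from $\tilde A$ by zeroing out the $j$-th row and column of $E$. Then $\tilde u_1^{(j)}$ is measurable with respect to the $\sigma$-field generated by $E$ with that row and column removed, so conditionally on this sub-$\sigma$-field the sum $\sum_{k\ne j} E_{jk}\,(\tilde u_1^{(j)})_k$ is a centered subgaussian sum of independent coordinates with variance at most $\|\tilde u_1^{(j)}\|_2^2 = 1$, hence is $O(\sqrt{\log n})$ with probability $1 - O(n^{-2})$; a union bound over $j$ yields a $1 - O(n^{-1})$ statement. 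I would then control $\|\tilde u_1 - \tilde u_1^{(j)}\|_2$ by a stability argument so that the replacement cost $\sum_{k\ne j} E_{jk}\big(y_k - (\tilde u_1^{(j)})_k\big)$ is absorbed, and substitute back into the row-$j$ equation $(\tilde \lambda_1 - \lambda_j)\,y_j = a E_{j1} + \sum_{k\ge 2} E_{jk}\, y_k$ to conclude $|y_j|(\lambda_1 - \lambda_j) \le C\sqrt{\log n}$ simultaneously in $j$.

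The main obstacle I expect is closing the bootstrap under Assumption~\ref{assmp::lambda} for general $p$. Setting $W := \max_{j\ge 2} |y_j|(\lambda_1 - \lambda_j)$ and iterating the self-consistent equation yields an inequality of the form $W \le C\sqrt{\log n} + \varepsilon\, W$ where $\varepsilon$ is a weighted operator quantity for $F$; proving $\varepsilon \le 1/2$ demands Khintchine-- or Bernstein-type moment inequalities at the $L^p$ norm whose index matches the $p$ appearing in~(\ref{ineqn::eigvalCon}). This matching of the $L^p$ tail to the exponent in the eigenvalue condition is what permits the eigengap $\delta$ to be as small as $\polylog$ while still taming all higher-order corrections to the implicit Neumann expansion of $(M - F)^{-1}$, and it is what genuinely distinguishes the nonasymptotic theory from a direct application of Davis--Kahan.
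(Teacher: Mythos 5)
Your plan shares the paper's high-level strategy (reduce to diagonal $A$, derive a self-consistent equation, decouple via leave-one-out, bootstrap at the $\ell^p$ index matched to Assumption~\ref{assmp::lambda}), but it differs from the paper's proof in a way that opens genuine gaps.

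First, the a~priori eigenvalue estimate $|\tilde\lambda_1-\lambda_1|=O(\sqrt{\log n})$ is false in general, and it is not what Theorem~\ref{thm::eigvalPtb} gives. That theorem controls the top eigenvalue of $\diag(\lambda_2,\ldots,\lambda_n)+E_{22}$, not $\tilde\lambda_1$. In fact, from the paper's identity $\tilde\lambda_1=\lambda_1+E_{11}+E_{12}q^\infty$, the shift contains a term whose leading contribution is $E_{12}D^{-1}E_{21}\approx\sum_j|E_{1j}|^2/(\lambda_1-\lambda_j)$, of typical size $\|d(\lambda)\|_1$; when $p=2$ and $\delta\asymp\sqrt{n\log n}$ this is of order $\sqrt{n/\log n}\gg\sqrt{\log n}$. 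The weaker statement you actually need, $\tilde\lambda_1-\lambda_j\asymp\lambda_1-\lambda_j$, is true but itself requires the full machinery; you cannot import it as a free a~priori input. Similarly, the a~priori claim $|a|\ge 1/2$ ``from a Davis--Kahan-type $\ell^2$ estimate'' is circular: the classical Davis--Kahan bound $\|E\|_2/\delta$ is of order $\sqrt{n}/\polylog$ when $\delta=\Theta(\polylog)$, which is what Assumption~\ref{assmp::lambda} permits, and the improved $\ell^2$ bound is Corollary~\ref{cor::main}, a consequence of the theorem you are trying to prove. The paper avoids both issues by not working with $\tilde\lambda_1$ or the true leading eigenvector at all in the core estimates: it constructs a candidate $\tilde u$ from a fixed point of the quadratic system~(\ref{eqn::quad}) whose coefficient matrix $\cal L=D-E_{22}$ involves only $E_{11}$ (small), and separately proves (Theorem~\ref{thm::match}, via Theorem~\ref{thm::eigvalPtb}) that this candidate is the top eigenvector.

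Second, and more seriously, your leave-one-out on the eigenvector does not close without further ideas. Writing $\Delta_j$ for the rank-two change to $\tilde A$ when the $j$-th row/column of $E$ is zeroed, $\|\Delta_j\|_2\asymp\sqrt{n}$, and any $\ell^2$ stability bound on $\|\tilde u_1-\tilde u_1^{(j)}\|_2$ that you could prove a~priori is at best of order $\sqrt{n}/\delta$; then the replacement cost $|\sum_{k}E_{jk}(y_k-(\tilde u_1^{(j)})_k)|\lesssim\sqrt{n}\,\|\tilde u_1-\tilde u_1^{(j)}\|_2$ is far larger than $\sqrt{\log n}$ and cannot be absorbed by the bootstrap. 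To close you would need control of the difference in the weighted norm $\|D_\lambda(\tilde u_1-\tilde u_1^{(j)})\|_p$, which is precisely the object the paper has to engineer, and which does not follow from eigenvector stability. The paper's solution is to apply leave-one-out \emph{to the iterates of a Jacobi-type fixed-point scheme} (Theorem~\ref{thm::indctL}): for the sequences $v^t,v^{t,m}$ one can show inductively that $\|v^t-v^{t,m}\|_p$ stays $O(\sqrt{\log n})$ and does not accumulate, because $\|E_{22}D^{-1}\|_{p,p}\le 1/2$ acts as a contraction. That is the technical device that makes the decoupling quantitative; working with $\tilde u_1^{(j)}$ directly skips it and leaves the key step unproven.

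A minor additional point: Assumption~\ref{assmp::lambda} in the worst allowed case ($p=\log n$) gives $\lambda_1-\lambda_j\gtrsim\log n$, not just $\sqrt{\log n}$, but this strengthens rather than weakens your side of the argument; it is not the issue.
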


%This bound has the feature that it is simultaneous for all $j$. 
Compared with the usual RS perturbation theory, there is an additional $\sqrt{\log n}$ factor, because we need to bound $|\langle \tilde{u}_1, u_j \rangle|$ simultaneously for all $j > 1$. In Section \ref{sec::opt} we show this is indeed sharp. 
A direct consequence of this bound is the $\sin \theta$ bound, which is simply the bound on the angle between two leading eigenvectors (since $\lambda_1$ has multiplicity 1). Note that $\sin \theta$ bound is a common metric for eigenspaces (see \cite{SteSun90}). Suppose that we choose the global phase of $\tilde{u}_1$ such that $ \langle \tilde{u}_1, u_1 \rangle = \big| \langle \tilde{u}_1, u_1 \rangle \big|$. We also write $K_{n,p}(\lambda)$ to denote the left-hand side of (\ref{ineqn::eigvalCon}) or (\ref{ineqn::eigvalCon2}) (depending on $p$). 

\begin{cor}\label{cor::main}
Let $\theta(\tilde{u}_1, u_1) := \arccos \langle \tilde{u}_1, u_1 \rangle$ be the angle between $\tilde{u}_1$ and $u_1$. Suppose $n \ge 2$ and Assumption \ref{assmp::lambda}. Then, holds with probability $1 - O(n^{-1})$,
\begin{equation}\label{ineqn::main2}
\sin \theta(\tilde{u}_1, u_1) \le C\sqrt{\log n}\, \big( \sum_{j \ge 2}^n (\lambda_1  - \lambda_j)^{-2} \big)^{1/2}
\end{equation}
where $C$ is the same constant as in Theorem \ref{thm::main}. As a consequence, $\sin \theta(\tilde{u}_1, u_1) \le C \cdot K_{n,p}(\lambda)$.
\end{cor}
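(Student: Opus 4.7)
}
The corollary is essentially a direct consequence of Theorem \ref{thm::main} combined with Parseval's identity, so the plan is short. First I would expand $\tilde{u}_1$ in the orthonormal basis $\{u_1,\ldots,u_n\}$. With the phase convention $\langle \tilde{u}_1, u_1\rangle = |\langle \tilde{u}_1, u_1\rangle| = \cos\theta(\tilde{u}_1, u_1)$, Parseval gives
\begin{equation*}
\sin^2\theta(\tilde{u}_1, u_1) \;=\; 1 - |\langle \tilde{u}_1, u_1\rangle|^2 \;=\; \sum_{j=2}^n |\langle \tilde{u}_1, u_j\rangle|^2.
\end{equation*}
Then I would plug in the simultaneous bound $|\langle \tilde{u}_1, u_j\rangle| \le C\sqrt{\log n}/(\lambda_1-\lambda_j)$ from Theorem \ref{thm::main}, which holds on a single event of probability $1-O(n^{-1})$ for all $j\ge 2$ at once. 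Taking square roots yields \eqref{ineqn::main2}.

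For the second statement I would reduce $\sqrt{\log n}\,\|d(\lambda)\|_2 \lesssim K_{n,p}(\lambda)$, by cases on which form of Assumption \ref{assmp::lambda} is in force. If (\ref{ineqn::eigvalCon2}) holds, then $\|d(\lambda)\|_2 \le \|d(\lambda)\|_1$ (monotonicity of $\ell^r$ norms of a fixed sequence), and $\sqrt{\log n}\le \log n$ for $n\ge 3$, so $\sqrt{\log n}\|d(\lambda)\|_2 \le \log n\, \|d(\lambda)\|_1 = K_{n,\infty}(\lambda)$. If (\ref{ineqn::eigvalCon}) holds for some $p\in[2,\infty)$, write $q = p/(p-2)$ and split into $p\le 4$ (so $q\ge 2$) and $p\ge 4$ (so $q\le 2$). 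In the first regime, $\|d(\lambda)\|_2 \le n^{1/2-1/q}\|d(\lambda)\|_q = n^{2/p-1/2}\|d(\lambda)\|_q$, and since $n^{1/p-1/2}\le 1 \le \sqrt{p}$ for $p\ge 2$, this is at most $\sqrt{p}\,n^{1/p}\|d(\lambda)\|_q$. In the second regime, $\|d(\lambda)\|_2 \le \|d(\lambda)\|_q \le \sqrt{p}\,n^{1/p}\|d(\lambda)\|_q$ trivially. Either way, $\sqrt{\log n}\,\|d(\lambda)\|_2 \le K_{n,p}(\lambda)$ up to an absolute constant.

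Combining both steps, $\sin\theta(\tilde{u}_1, u_1) \le C\sqrt{\log n}\,\|d(\lambda)\|_2 \le C'\, K_{n,p}(\lambda)$ on the event of Theorem \ref{thm::main}, which proves the corollary. There is no real obstacle here; the only thing to be careful about is the case analysis in the final norm comparison, which must cover the full range $p\in[2,\infty]$ allowed by Assumption \ref{assmp::lambda}. Everything else is a one-line application of Parseval and Theorem \ref{thm::main}.
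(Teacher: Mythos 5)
Your proposal is correct and essentially matches the paper's route: the first bound is Parseval applied to the expansion of $\tilde{u}_1$ in the $\{u_j\}$ basis together with the simultaneous bound of Theorem \ref{thm::main}, and the second bound is the vector-norm comparison $\|d(\lambda)\|_2 \le n^{1/p}\|d(\lambda)\|_{p/(p-2)}$, which the paper outsources to Lemma \ref{lem::simpleNormBound} (whose proof contains the same $p\le 4$ versus $p\ge 4$ case split you re-derive inline, plus the trivial $p=\infty$ case). No gaps.
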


Since $\sin^2 \theta(\tilde{u}_1, u_1) = 1 - \langle \tilde{u}_1, u_1 \rangle^2$, this bound is equivalent to a bound on $\langle \tilde{u}_1, u_1 \rangle$. This is also equivalent to $\ell^2$ bound $\| \tilde{u}_1 - u_1 \|_2$ up to an absolute constant, because in general we have
\begin{equation*}
\sin \theta(\tilde{u}_1, u_1) \le \| \tilde{u}_1 - u_1 \|_2 \le \sqrt{2} \sin \theta(\tilde{u}_1, u_1).
\end{equation*}
The first bound in Corollary \ref{cor::main} is a direct consequence of Theorem \ref{thm::main}. The second bound is due to a simple lemma (Lemma \ref{lem::simpleNormBound}). When $K_{n,p}(\lambda) = o(1)$, clearly the $\ell^2$ error $\| \tilde{u}_1 - u_1 \|_2$ tends to zero with high probability. Therefore, with a much weaker condition on eigenvalues, our nonasymptotic RS theory assures $\| \tilde{u}_1 - u_1 \|_2 = o_P(1)$ even when $\| E \|_2 \gg \delta$.

\paragraph{Examples} 
\begin{exm}[large $\delta$]
Suppose the eigengap $\delta = \lambda_1 - \lambda_2$ satisfies $\delta = \Omega(\sqrt{n \log n})$, then Assumption \ref{assmp::lambda} holds with the choice $p = 2$. Thus, with $p=2$, Theorem \ref{thm::main} offers a more delicate bound than (\ref{ineqn::DK}). In particular, Corollary \ref{cor::main} gives a bound $O(\sqrt{n \log n} / \delta)$ for $\sin \theta(\tilde{u}_1, u_1)$, which recovers (\ref{ineqn::DK}) except for a logarithmic term. Arguably, the additional term is incurred because we consider simultaneous control of $\langle \tilde{u}_1, u_j\rangle$ (for any $j > 1$) in the first place.

For many random models, the perturbation satisfies $\|  E\|_2 = O_P(\sqrt{n})$, and known results \citep{Ben11,BenNad12} indicate we actually only need $\delta \asymp \sqrt{n}$ for good characterization of leading eigenvectors (phase transition etc.). These results, however, assumes that $A$ has low rank, and they cannot deal with cases where `gaps' $\lambda_1 - \lambda_j$ have different scaling.

%With a large eigengap $\delta$, Corollary \ref{cor::main} gives an upper bound of $O(\sqrt{n \log n} / \delta)$ for $\sin \theta(\tilde{u}_1, u_1)$. This upper bound is almost as good as the bound Davis-Kahan gives, except for a logarithmic term; which, however, is incurred because we consider simultaneous control of $\langle \tilde{u}_1, u_j\rangle$ in the first place. \\
\end{exm}

\begin{exm}[small $\delta$]
Suppose $\lambda_j = (n+1-j) \log^{2+\varepsilon}n$ for $1\le j \le n$, where $\varepsilon > 0$. For sufficiently large $n$, (\ref{ineqn::eigvalCon2}) is true so Assumption \ref{assmp::lambda} holds. Theorem \ref{thm::main} gives $\langle \tilde{u}_1, u_{j+1} \rangle = O(j^{-1}(\log n)^{-3/2 - \varepsilon})$ for $j\ge 1$, which shows the perturbation is different along different direction $u_j$. This inhomogeneity is also indicated by RS perturbation theory, a phenomenon not captured by (\ref{ineqn::DK}). 

Interestingly, the eigengap $\delta$ is as small as $\log^{2+\varepsilon} n$, which is much smaller than the magnitude of the perturbation ($\|E\|_2 = O_P(\sqrt{n})$ for GOE). The heuristic explanation is that, the perturbation $E$ affects all eigenvectors in a mild way, rather than changes a few eigenvectors severely. Bad examples such as $E = (\lambda_1 - \lambda_2 + \varepsilon) u_2 u_2^*$, which we encountered in Section \ref{sec::intro}, are very rare. Moreover, larger eigenvalues and their associated eigenvectors are less affected by the perturbation. This is also why if we desire a small $\delta$, we usually require a larger $\lambda_1$ ($\lambda_1$ scales with $n \log^{1+\varepsilon} n$ here). 

%We also remark that if we desire a small $\delta$, we usually need to require larger $\lambda_1$. In this example, $\lambda_1$, scales with $n \log^{1+\varepsilon} n$. In general, the assumption on eigenvalues cannot be weakened much.\\

%the one we encountered in are rare,however, the price we pay is a fast growing $\lambda_1$, which scales with $n \log^{1+\varepsilon} n$.\\

%In the example, Theorem \ref{thm::main} gives $\langle \tilde{u}_1, u_j \rangle = O(j^{-1}(\log n)^{-1/2 - \varepsilon})$, which is vanishing at different rate for different $j$. The inhomogeneity of eigenvector perturbation reflects the effects of distribution of eigenvalues $\{\lambda_j\}_{j=1}^n$, which cannot be captured by an $\ell^2$ norm bound (e.g., Davis-Kahan Theorem) In Section \ref{sec::opt}, we will show that the bound in Theorem \ref{thm::main} is optimal.\\
\end{exm}

\begin{exm}[low rank]\label{exm::lowrank}
Suppose the rank of $A$ is $r$, where $r < n$. Assume there is some $p\in [2, \infty)$ such that the leading eigenvalue satisfies $\lambda_1 = \Omega(\sqrt{p \log n}\, n^{1 - 1/p})$, and the eigengap satisfies $\delta = \Omega( \sqrt{p \log n}\, n^{1/p} r )$. Then Assumption \ref{assmp::lambda} holds. In particular, choosing $p = \log n$, we only need $\lambda_1 = \Omega(n\log n), \delta = \Omega(r \log n)$. When the rank is small, e.g., $r = O(1)$, the eigengap can be as small as $\delta = \Omega(\log n)$. 

It readily follows from Corollary \ref{cor::main} that $\sin \theta(\tilde{u}_1, u_1) = \tilde{O}(\sqrt{r}/\delta + \sqrt{n}/ \lambda_1)$, where $\tilde{O}$ hides a logarithmic term. Under a slightly weaker assumption on the random model, \cite{ORVuWan13} establishes the same bound for singular vectors. Section \ref{sec::related} offers a detailed discussion.

%Under a slightly stronger random regime, our result implies the bound in Theorem 9 in \cite{ORVuWan13}, except for logarithmic terms. Section \ref{sec::related} offers a detailed discussion. (TO DO)
\end{exm}
 
\subsection{Optimality Results}\label{sec::opt}
In this subsection, we probe into two natural questions concerning the optimality of Theorem \ref{thm::main}: is the upper bound sharp? Can the assumption be relaxed? We give an affirmative answer to the first question (up to a constant). We also show that in general it is necessary to require $n^{1/p} \| d(\lambda) \|_{p/(p-2)} = O(\polylog)$, where $\polylog$ means $\log^K n$ for some constant $K$. This is an indication that Assumption \ref{assmp::lambda} cannot be weakened much except for a log term.

Without loss of generality, we constrain $A$ to be a diagonal matrix, i.e., $A = \diag \{\lambda_1,\ldots, \lambda_n\}$, where $\lambda_1 > \lambda_2 \ge \ldots \ge \lambda_n$. We also only consider real numbers for simplicity. Clearly the eigenvectors of $A$ are simply $\{e_j\}_{j=1}^n$, the standard basis in $\bb{R}^n$. We assume that $E$ has the following structure:
\begin{equation}\label{eqn::E-g}
E = \left( \begin{array}{cc}
0 & g^T \\ g & 0 \end{array}  \right),
\end{equation}
where $g$ is a standard normal vector $N(0,I_{n-1})$. Recall that $\tilde{u}_1$ is the leading eigenvector of $\tilde{A} = A + E$, so $\langle \tilde{u}_1, e_j \rangle$ is simply the $j$'th entry of $\tilde{u}_1$. The following theorem provides a lower bound on the entry-wise perturbation.
\begin{thm}\label{thm::main2}
Suppose $n \ge 2$ and Assumption \ref{assmp::lambda}. Assume $A = \diag \{\lambda_1,\ldots, \lambda_n\}$ with $\lambda_1 > \lambda_2 \ge \ldots \ge \lambda_n$, and $E$ has the form (\ref{eqn::E-g}). Then, with probability $1 - O(n^{-2})$, 
\begin{equation}
| \langle \tilde{u}_1, e_{j+1} \rangle | \ge \frac{ |g_j| }{4 (\lambda_1 - \lambda_{j+1})}, \qquad \forall  1 \le j \le n-1.
\end{equation}
As a consequence, with probability $1 - O(n^{-2})$ we have $\max_j (\lambda_1 - \lambda_{j+1}) | \langle \tilde{u}_1, e_{j+1} \rangle | \gtrsim \sqrt{\log n}$,  which matches the upper bound in Theorem \ref{thm::main} up to some constant.
\end{thm}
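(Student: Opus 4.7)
The plan is to exploit the block structure of $E$ to write down $\tilde u_1$ essentially explicitly. With $\tilde u_1 = (v_1,\ldots,v_n)^T$, the $(j{+}1)$-st row of the eigenvalue equation $\tilde A \tilde u_1 = \tilde\lambda_1 \tilde u_1$ gives $g_j v_1 + \lambda_{j+1} v_{j+1} = \tilde\lambda_1 v_{j+1}$, whence
\begin{equation*}
v_{j+1} \;=\; \frac{g_j\, v_1}{\tilde\lambda_1 - \lambda_{j+1}}, \qquad 1 \le j \le n-1,
\end{equation*}
and the first row yields the secular equation
\begin{equation*}
\tilde\lambda_1 - \lambda_1 \;=\; \sum_{j=1}^{n-1} \frac{g_j^2}{\tilde\lambda_1 - \lambda_{j+1}}.
\end{equation*}
The target bound $|v_{j+1}| \ge |g_j|/(4(\lambda_1-\lambda_{j+1}))$ will then follow from two quantitative estimates: a lower bound on $|v_1|$, and an upper bound on $\tilde\lambda_1 - \lambda_{j+1}$ in terms of $\lambda_1 - \lambda_{j+1}$.

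The essential monotonicity is $\tilde\lambda_1 \ge \lambda_1$, immediate from $\tilde\lambda_1 \ge e_1^* \tilde A e_1 = \lambda_1$. Since every $\lambda_1 - \lambda_{j+1}$ is positive, so is $\tilde\lambda_1 - \lambda_{j+1}$, and the formula for $v_{j+1}$ is well defined. The normalization $\|\tilde u_1\|=1$ gives
\begin{equation*}
v_1^{-2} \;=\; 1 + \sum_{j=1}^{n-1} \frac{g_j^2}{(\tilde\lambda_1 - \lambda_{j+1})^2} \;\le\; 1 + \sum_{j=1}^{n-1}\frac{g_j^2}{(\lambda_1 - \lambda_{j+1})^2},
\end{equation*}
and the secular equation together with $\tilde\lambda_1 \ge \lambda_1$ yields
\begin{equation*}
\tilde\lambda_1 - \lambda_1 \;\le\; \sum_{j=1}^{n-1} \frac{g_j^2}{\lambda_1 - \lambda_{j+1}}.
\end{equation*}
Both right-hand sides are weighted sums of independent $\chi_1^2$ variables with positive deterministic weights, of means $\|d(\lambda)\|_2^2$ and $\|d(\lambda)\|_1$ respectively. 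I then apply a Laurent--Massart-type tail bound at level $x \asymp \log n$ to show that, with probability $1 - O(n^{-2})$, the first sum is at most $1$ (hence $|v_1| \ge 1/\sqrt 2$) and the second sum is at most $\delta/3$ (hence $\tilde\lambda_1 - \lambda_{j+1} \le (4/3)(\lambda_1-\lambda_{j+1})$ uniformly in $j$). Combining,
\begin{equation*}
|v_{j+1}| \;=\; \frac{|g_j|\,|v_1|}{\tilde\lambda_1 - \lambda_{j+1}} \;\ge\; \frac{|g_j|}{4(\lambda_1 - \lambda_{j+1})}
\end{equation*}
for every $j$ simultaneously on a single good event.

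The main technical step is verifying, under each regime of Assumption \ref{assmp::lambda}, that the Laurent--Massart deviations $\|d(\lambda)\|_2 \sqrt{\log n}$ and $\|d(\lambda)\|_\infty \log n$ are small compared to $1$ and to $\delta$ respectively. This reduces, via H\"older's inequality and the identity $\|d(\lambda)\|_\infty = 1/\delta$, to purely algebraic bounds relating the $\ell^{p/(p-2)}$ norm of $d(\lambda)$ appearing in the assumption to the other $\ell^q$ norms; a routine but slightly delicate computation shows everything is controlled provided the absolute constant $c_0$ is small enough. Finally, for the stated consequence the bound gives $\max_j (\lambda_1 - \lambda_{j+1})|v_{j+1}| \ge \tfrac14 \max_{1 \le j \le n-1} |g_j|$ on the good event, and a standard lower-tail estimate for the maximum of $n-1$ i.i.d.\ standard normals shows $\max_j |g_j| \gtrsim \sqrt{\log n}$ with probability $1 - O(n^{-2})$, completing the proof.
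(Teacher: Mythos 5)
Your proposal is correct and derives the same key identities as the paper's proof: $v_{j+1} = g_j v_1/(\tilde\lambda_1 - \lambda_{j+1})$ and the secular equation $\tilde\lambda_1 - \lambda_1 = \sum_j g_j^2/(\tilde\lambda_1 - \lambda_{j+1})$. The route to them differs in presentation: you read these off the eigenvalue equation row by row (the natural move for the arrowhead structure), whereas the paper works variationally, maximizing $\tilde u^T\tilde A\tilde u$ subject to $\|\tilde u\|_2 = 1$ and pinning down the maximizer via an AM--GM argument with auxiliary parameters $\rho_j$ --- landing on exactly the formulas $w_j = \rho_j a/g_j$ with $\rho_j = g_j^2/(\lambda_1 - \lambda_{j+1} + \gamma)$ and $\gamma = \tilde\lambda_1 - \lambda_1$. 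The probabilistic step is where you diverge most: the paper just applies the union bound $\max_j|g_j|\le\sqrt{6\log n}$ and then $\gamma \le 6\log n\,\|d(\lambda)\|_1 \le 6c_0^2\delta \le \delta$ and $\sum_j g_j^2/(\lambda_1-\lambda_{j+1})^2 \le 6\log n\,\|d(\lambda)\|_2^2 \le 6c_0^2 \le 1$, whereas you invoke a Laurent--Massart bound on the two weighted $\chi^2_1$ sums. Both work under Assumption~\ref{assmp::lambda} once $c_0$ is small; the Laurent--Massart route is slightly sharper but requires verifying that $\|d(\lambda)\|_1$, $\|d(\lambda)\|_2\sqrt{\log n}$, and $\log n/\delta$ are all small relative to $\delta$, which does follow from $\|d(\lambda)\|_\infty = 1/\delta \le \|d(\lambda)\|_{p/(p-2)}$ and H\"older, exactly as you indicate. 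Your constants are if anything a bit generous: with $|v_1|\ge 1/\sqrt2$ and $\tilde\lambda_1-\lambda_{j+1}\le (4/3)(\lambda_1-\lambda_{j+1})$ you get $3/(4\sqrt2)>1/4$, compared to the paper's $1/(2\sqrt2)\cdot(1/2)>1/4$ via $\gamma\le\delta$. One small point worth making explicit: you need $v_1\ne 0$ before dividing; this holds because $\tilde\lambda_1 \ge \lambda_1 > \lambda_{j+1}$, so if $v_1=0$ the equations $(\lambda_{j+1}-\tilde\lambda_1)v_{j+1}=0$ would force $\tilde u_1=0$.
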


This theorem says that the upper bound in Theorem \ref{thm::main} is optimal. Roughly speaking, $| \langle \tilde{u}_1, u_j \rangle |$ has the `right' form $O(1/(\lambda_1 - \lambda_j))$, which agrees with the usual RS perturbation theory. It is possibly interesting to see if the additional $\sqrt{\log n}$ factor can be removed for a given $j$, but we do not study this problem in this paper.

Our next task is to construct an example such that Theorem \ref{thm::main} fails while $n^{1/p} \| d(\lambda) \|_{(p-2)/p} = \Theta(\polylog)$. In this example, we will see that the leading eigenvectors $\tilde{u}_1$ and $u_1$ are orthogonal to each other ($\tilde{u}_1 \bot u_1$) with high probability for large $n$, which implies that $\tilde{u}_1$ is an inconsistent estimator of $u_1$. This inconsistency contrasts with the conclusion in Corollary \ref{cor::main}, and therefore shows the near-optimality of the assumption (up to a log term).

The example is constructed as follows. Let $A$ be a diagonal matrix, i.e., $A = \{ \lambda_1, \lambda_2,\ldots, \lambda_n\}$ where $\lambda_1 > \ldots > \lambda_n$, and $E$ be a real symmetric matrix having zeros in its first row and first column. The bottom right block of $E$, denoted by $G$, is a GOE of size $n-1$, meaning the off-diagonal entries of $G$ are standard Gaussian variables $N(0,1)$, diagonal entries are $N(0,2)$, and entries above the diagonal are jointly independent. In matrix forms, we write
\begin{equation*}
A = \left( \begin{array}{ccc} \lambda_1 & \ldots & 0 \\ \vdots & \ddots & \vdots \\ 0 & \ldots & \lambda_n \end{array} \right) =: 
\left( \begin{array}{cc} \lambda_1 & 0 \\ 0 & A_0 \end{array} \right), \qquad 
E = \left( \begin{array}{cc}0 & 0 \\ 0 & G \end{array} \right).
\end{equation*}
Clearly, the leading eigenvector of $A$ is $e_1 = (1,0,\ldots,0)^T$. If, as we will prove, $\lambda_{\max}(A_0 + G) > \lambda_1$, then $\tilde{A}$ will have a leading eigenvector orthogonal to $e_1$. To this end, we set $\lambda_n = 3\sqrt{n}$. and determine other values of $\lambda_j$ by $\lambda_1 - \lambda_{j+1} = j^{(p-2)/p} \cdot n^{1/p} / \log^2 n$. Equivalently, we set
\begin{equation*}
\lambda_{j+1} = 3\sqrt{n} + ((n-1)^{(p-2)/p} - j^{(p-2)/p}) n^{1/p} / \log^2 n, \quad \forall \, j=0,1,\ldots,n-1.
\end{equation*}
With this setting, Assumption \ref{assmp::lambda} is slightly violated:
\begin{equation*}
n^{1/p} \| d(\lambda) \|_{p/(p-2)} = \log^2 n \cdot \Big( \sum_{j=1}^{n-1} \frac{1}{j} \Big)^{(p-2)/p} = \Theta(\log^{3 - 2/p} n),
\end{equation*}
whereas Assumption \ref{assmp::lambda} requires that $n^{1/p} \| d(\lambda) \|_{p/(p-2)}$ vanishes roughly as $O(1/\sqrt{\log n})$ ($O(1/\log n)$ for large $p$). The next result says Corollary \ref{cor::main} (and thus Theorem \ref{thm::main}) would be false if we weakened the assumption by a log term $O(\polylog)$.
\begin{thm}\label{thm::main3}
There exists some absolute constant $N > 0$ such that for all $n > N$, with probability $1 - O(n^{-1})$, $\lambda_{\max}(A_0 + G) > \lambda_1$. As a consequence, this implies $\tilde{u}_1 \bot u_1$.
\end{thm}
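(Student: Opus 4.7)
The plan is to first reduce the statement to a claim about $\lambda_{\max}(A_0 + G)$, and then to lower-bound $\lambda_{\max}(A_0 + G)$ by restricting to a well-chosen coordinate subspace and combining Weyl's inequality with concentration of the top GOE eigenvalue.

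For the reduction, $\tilde A = A + E$ is block diagonal with blocks $\lambda_1$ and $A_0 + G$, so $\mathrm{spec}(\tilde A) = \{\lambda_1\} \cup \mathrm{spec}(A_0 + G)$. Correspondingly, the leading eigenvector $\tilde u_1$ equals $e_1 = u_1$ when $\lambda_1 > \lambda_{\max}(A_0 + G)$, and otherwise lies entirely in the second block, that is in $\mathrm{span}(e_2, \ldots, e_n)$, which is orthogonal to $u_1 = e_1$. Hence it suffices to show $\lambda_{\max}(A_0 + G) > \lambda_1$ with probability $1 - O(n^{-1})$.

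For the main lower bound, I would introduce the subspace $V_k \subset \bb{R}^{n-1}$ spanned by the first $k$ standard basis vectors. By Courant--Fischer, $\lambda_{\max}(A_0 + G) \ge \lambda_{\max}(D_k + G_k)$, where $D_k = \diag(\lambda_2, \ldots, \lambda_{k+1})$ and $G_k$ is the top-left $k \times k$ principal submatrix of $G$, itself a GOE of size $k$. Since $D_k \succeq \lambda_{k+1} I$, Weyl's inequality gives $\lambda_{\max}(D_k + G_k) \ge \lambda_{k+1} + \lambda_{\max}(G_k)$. Thus the task reduces to choosing $k$ so that $\lambda_{\max}(G_k) > \lambda_1 - \lambda_{k+1} = k^{(p-2)/p} n^{1/p}/\log^2 n$ with the required probability. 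Gaussian (Borell--TIS) concentration applied to the $1$-Lipschitz functional $G \mapsto \lambda_{\max}(G_k)$, together with $\bb{E}\lambda_{\max}(G_k) = (2 - o(1))\sqrt{k}$, yields $\lambda_{\max}(G_k) \ge (2 - \epsilon)\sqrt{k}$ outside an event of probability $\exp(-c \epsilon^2 k)$. So the deterministic requirement reduces to $k^{(4-p)/(2p)} \gtrsim n^{1/p}/\log^2 n$ with a small margin.

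I would verify this in the clean regime $p = \log n$, where $n^{1/p} = e$ and the constraint becomes $\sqrt{k} \lesssim 2 \log^2 n / e$; taking $k = \lfloor c \log^4 n \rfloor$ for any constant $c < 4/e^2$ satisfies the deterministic inequality with margin, while the concentration failure probability is $\exp(-\Omega(\log^4 n)) = o(n^{-1})$. The main obstacle I anticipate is pushing the argument to all $p$ uniformly: near $p = 4$ the exponent $(4-p)/(2p)$ vanishes and the combined Weyl/GOE-edge lower bound is genuinely too crude, which may demand a finer BBP-style analysis of the right edge of the deformed-semicircle distribution of $D_k + G_k$. Since the theorem is invoked only as an existence statement for a counter-example witnessing near-tightness of Assumption~\ref{assmp::lambda}, the single case $p = \log n$ already suffices for the intended application.
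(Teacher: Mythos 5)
Your reduction to showing $\lambda_{\max}(A_0+G) > \lambda_1$ is correct and matches the paper's. The lower-bound argument, however, is a genuinely different route. The paper first shows $\lambda_{\min}(A_0+G) \ge \lambda_n - \|G\|_2 \ge 0$ w.h.p., so $\lambda_{\max}(A_0+G) = \|A_0+G\|_2$; then lower-bounds the \emph{second moment} by a single test vector, $\mathbb{E}\|A_0+G\|_2^2 \ge \max_{\|u\|_2=1}\mathbb{E}\|A_0u+Gu\|_2^2 = \lambda_2^2 + n$ (taking $u = e_1$); concentrates $\|A_0+G\|_2$ around $(\mathbb{E}\|A_0+G\|_2^2)^{1/2}$ via Gaussian Lipschitz concentration; and finally verifies $\lambda_2^2 + n > (\lambda_1 + O(\sqrt{\log n}))^2$, which amounts to $(\lambda_1-\lambda_2)(\lambda_1+\lambda_2) = o(n)$ and holds for every $p \in [2,\infty)$. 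Your approach---Courant--Fischer restriction to the top-left $k\times k$ block, then Weyl, then GOE-edge concentration---only reaches the conclusion for extreme $p$: the requirement $2\sqrt{k} \gtrsim k^{(p-2)/p} n^{1/p}/\log^2 n$ with $k \le n-1$ already fails at $p=3$ (where one would need $k \gtrsim n^2/\log^{12} n$), not merely near $p=4$, and continues to fail until $p$ is of order $\log n / \log\log n$. You correctly flag this limitation and correctly observe that a single value of $p$ (say $p=\log n$) is enough for the counterexample to make its intended point, so your proof is valid for that choice of $p$; but the theorem as the paper proves it covers all $p \ge 2$ uniformly, and the second-moment trick is precisely what sidesteps the bulk-versus-edge tension that defeats the Weyl-plus-edge argument at intermediate $p$. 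It is worth internalizing why: the construction makes $\lambda_1^2 - \lambda_2^2$ only $O(n/\log^4 n + n^{1/2+1/p}/\log^2 n) = o(n)$, so an energy lower bound of $\lambda_2^2+n$ always clears the bar, whereas an additive edge bound of $\lambda_{k+1} + 2\sqrt{k}$ does not.
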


When $n^{1/p} \| d(\lambda) \|_{p/(p-2)}$ grows as slow as $O(\polylog)$, this results says the perturbation may leave $\tilde{u}_1$ as a useless estimator of $u_1$, as $\tilde{u}_1$ reveals no information about $u_1$. It is possibly of interest to ascertain the optimal condition on $n^{1/p} \| d(\lambda) \|_{p/(p-2)}$ for our main results, but we do not pursue this goal in this paper.

\subsection{Further Related Work}\label{sec::related}

In classical paper by \cite{DavKah70}, only deterministic matrices were considered. \cite{Wed72} established similar bounds for singular value decomposition, which is similar to spectral decomposition but applies to non-Hermitian matrices as well. Recently, \cite{FanWanZho16} studied $\ell^\infty$ perturbation bounds for structured low-rank matrices that have incoherent eigenvectors, but their result is suboptimal when perturbation matrix is random.

In the literature of random matrix theory and statistics, there are many results of eigenvectors for spiked Wigner matrices \citep{Ben11} and spiked covariance matrices \citep{Pau07, JohLu12, BenNad12}. These results assume that the matrix to be perturbed has low rank, and (implicitly) that the eigenvalues are of the same magnitude. Thus, these works are very different in nature with our results in this paper.

\cite{Vu11} and \cite{ORVuWan13} studied the top singular vectors (containing eigenvectors as a special case) of a perturbed (rectangular) matrix. To compare our result with theirs, suppose the matrices under consideration are symmetric. When $A$ is of low-rank, and $\lambda_1 \gg \delta$, and the perturbation is random, they proved upper bounds that improve over Davis-Kahan theorem (or Wedin's theorem \citep{Wed72}). Roughly speaking, they showed that when $\lambda_1 \ge \max\{n, \sqrt{n}\delta\}$, with large probability,
\begin{equation*}
\sin \theta(\tilde{u}_1, u_1) = O(\frac{\sqrt{r}}{\delta}).
\end{equation*}
See Theorem 9 in \cite{ORVuWan13}. They also proved a more general theorem that requires a weaker assumption on the random model. Their bound is the same as what we obtained (see Example \ref{exm::lowrank}), except for a log term.  Yet it is not clear why their result requires $\lambda_1 \ge \max\{n, \sqrt{n}\delta\}$, and how $\lambda_1 \gg \delta$ helps. Our nonasymptotic RS perturbation theory offers a natural explanation and generalizes their results (with the price of a stronger independence assumption). %A nice survey on eigenvectors of random matrices is also provided in \cite{OVuWan16}.

\subsection{Notations}\label{sec::notation}
For complex vectors $u,v \in \mathbb{C}^n$, denote the conjugate transpose of $u$ by $u^*$, and denote the inner product by $\langle u, v \rangle = u^* v$. Let $\mathbb{S}^{n-1}$ denote the unit ball $\{u \in \bb{C}^n: \| u \|_2 = 1\}$. We use $\diag(u)$ to mean a diagonal matrix with its diagonal entries being $u_1,\ldots,u_n$. For a complex matrix $M$, the real and imaginary parts of $M$ are $\Re(M)$ and $\Im(M)$. We use $\ddiag(M)$ to extract its diagonal entries into a vector. For matrix norms, we denote $\| M \|_p = \sup_{ \| u \|_p \le 1 } \| M u \|_p$, and $\| M \|_{p,q} = \sup_{ \| u \|_p \le 1 } \| M u \|_q$, where $p,q \in [1, \infty]$. For nonnegative numbers $a,b$, if $a \lesssim b$ (or $a \gtrsim b$), we mean there is an absolute constant $C>0$ such that $a < Cb$ (or $a > Cb$); we also write $a = O( b )$ for $a \lesssim b$ and $a = \Omega(b)$ for $a \gtrsim b$. We write $a \asymp b$ if $a \lesssim b$ and $b \lesssim a$.

\section{Preliminary Lemmas}\label{sec::pre}
Throughout this section, we will use $X \in \bb{C}^{n \times n}$ to denote a Hermitian matrix with zero mean and independent sub-gaussian entries in its upper triangular part, i.e., $\bb{E} X_{jk} = 0$, $\Re(X_{jk}), \Im(X_{jk})$ are independent for any $1 \le j \le k \le m$, and $\| \Re(X_{jk}) \|_{\psi_2} \le 1, \| \Im(X_{jk}) \|_{\psi_2} \le 1$. 

\subsection{Matrix Norm Bounds}

We will need a comparison theorem for subgaussian processes. The form of the following theorem is from \cite{Ver15} (see Theorem 8.2).

\begin{thm}[Fernique-Talagrand's comparison theorem]\label{thm::subgCmp}
Let $(T, d)$ be a separable metric space. Suppose $G_u$ is a Gaussian random process and $H_u$ is a separable sub-gaussian process on $T$ where $u \in T$. Assume that $\bb{E} G_u = \bb{E} H_u = 0$ for all $u \in T$. Assume also that for some $M > 0$, the following increment comparison holds:
\begin{equation}\label{ineqn::increment}
\| H_u - H_v \|_{\psi_2} \le M \| G_u - G_v \|_2, \qquad \forall u,v \in T.
\end{equation}
Then, for some absolute constant $C>0$,
\begin{equation*}
\bb{E} \sup_{u \in T} H_u \le CM \bb{E} \sup_{u \in T} G_u.
\end{equation*}
\end{thm}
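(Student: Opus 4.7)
The plan is to deploy Talagrand's generic chaining machinery, with the $\gamma_2$ functional as the central object. For a metric space $(S,\rho)$ define
\[
\gamma_2(S,\rho) := \inf \sup_{u \in S}\, \sum_{k \ge 0} 2^{k/2}\, \rho(u, S_k),
\]
where the infimum runs over admissible sequences of finite subsets $S_k \subseteq S$ with $|S_0| = 1$ and $|S_k| \le 2^{2^k}$. By separability we may reduce to finite $T$, so no convergence subtleties arise, and centering of both processes is assumed throughout.

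Step one is the generic-chaining upper bound for subgaussian processes: for any centered separable process $Y_u$ on $(T, \rho)$ satisfying $\| Y_u - Y_v \|_{\psi_2} \le \rho(u,v)$, one has $\bb{E} \sup_{u \in T} Y_u \lesssim \gamma_2(T, \rho)$. I would prove this by fixing a near-optimal admissible sequence $\{T_k\}$ with base point $u_0 \in T_0$, introducing nearest-point projections $\pi_k : T \to T_k$, and telescoping $Y_u - Y_{u_0} = \sum_{k \ge 1} (Y_{\pi_k(u)} - Y_{\pi_{k-1}(u)})$. The subgaussian tail $\bb{P}(|Y_a - Y_b| > t\,\rho(a,b)) \le 2 e^{-c t^2}$, union-bounded over the at most $2^{2^{k+1}}$ consecutive pairs at level $k$ and calibrated at thresholds $t_k \asymp 2^{k/2}$, sums to give the bound.

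Step two invokes Talagrand's majorizing measure theorem, which states that for the canonical metric $d_G(u,v) := \| G_u - G_v \|_2$ of a centered Gaussian process on $T$ the two-sided equivalence $\bb{E} \sup_{u \in T} G_u \asymp \gamma_2(T, d_G)$ holds with absolute constants. I would cite this as a black box; the upper bound is again routine chaining, whereas the reverse inequality is the deep content of Talagrand's theorem and relies on a successive partitioning / growth-of-partition construction.

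Step three assembles everything. Write $d_H(u,v) := \| H_u - H_v \|_{\psi_2}$; the hypothesis \eqref{ineqn::increment} reads $d_H \le M\, d_G$ on $T \times T$, and monotonicity of $\gamma_2$ in the metric gives $\gamma_2(T, d_H) \le M\, \gamma_2(T, d_G)$. Applying step one to $H$ with $\rho = d_H$ and then step two yields
\[
\bb{E}\sup_{u \in T} H_u \;\lesssim\; \gamma_2(T, d_H) \;\le\; M\, \gamma_2(T, d_G) \;\lesssim\; M\, \bb{E}\sup_{u \in T} G_u,
\]
which is the desired inequality with absolute constant $C$. The main obstacle is precisely the majorizing measure theorem: without its matching lower bound for Gaussian suprema, chaining alone only compares one subgaussian process to another, not to a Gaussian benchmark. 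I would therefore import it as a cited ingredient rather than reprove it.
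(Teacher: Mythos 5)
The paper does not prove this theorem at all: it is imported verbatim from \cite{Ver15} (Theorem 8.2) and used as a black box, so there is no paper proof to compare against. Your argument is the standard proof of that result and is correct: reduce to finite $T$ by separability, bound $\bb{E}\sup_u H_u$ via generic chaining by $\gamma_2(T, d_H)$ where $d_H(u,v) = \|H_u - H_v\|_{\psi_2}$, use the hypothesis $d_H \le M\, d_G$ and monotonicity of $\gamma_2$ to get $\gamma_2(T,d_H) \le M\,\gamma_2(T,d_G)$, and then invoke the lower bound from Talagrand's majorizing measure theorem to convert $\gamma_2(T,d_G)$ into $\bb{E}\sup_u G_u$. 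You also correctly identify that the majorizing measure lower bound is the indispensable and deep ingredient---chaining alone compares both processes to $\gamma_2$ but cannot, by itself, recover the Gaussian supremum as the benchmark. This matches the route taken in the cited source, so it is the right proof to sketch; if anything, it would be worth remarking explicitly that the admissible-sequence reduction to finite $T$ uses monotone convergence to pass to the separable limit, but that is a routine point.
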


From Fernique-Talagrand's comparison theorem, it is easy to prove Chevet's inequality \citep{Che78} for the subgaussian process $\langle u, Xu \rangle$. For completeness we include its proof in the appendix.

\begin{thm}[Chevet's inequality]\label{thm::Chevet}
Suppose $T \subset \bb{C}^{n}$ has a countable and dense subset. Then
\begin{equation*}
\bb{E} \sup_{u \in T} \langle u, Xu \rangle \le C w(T) \rad(T),
\end{equation*}
where $C>0$ is an absolute constant, $\rad(T)$ is the radius of $T$, and $w(T) := \bb{E} \sup_{u \in T} | \langle u, g \rangle |$ is the Gaussian width of $T$. Here $g$ is a standard gaussian vector in $\bb{C}^n$.
\end{thm}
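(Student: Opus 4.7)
The plan is to apply the Fernique-Talagrand comparison theorem (Theorem~\ref{thm::subgCmp}) to the centered sub-gaussian process $H_u := \langle u, Xu\rangle$ with a suitable linear Gaussian process $G_u$. Three ingredients are needed: a clean bilinear expression for the increments $H_u - H_v$, a bound on the sub-gaussian norm of these increments in terms of $\|u-v\|_2$ and $\mathrm{rad}(T)$, and a choice of $G_u$ whose $L^2$-increments match the sub-gaussian bound up to a constant.

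First I would rewrite the increments in a symmetric way. Using that $X$ is Hermitian (so $v^{\ast} X(u+v) = \overline{(u+v)^{\ast} X v}$ and $u^{\ast}Xu \in \mathbb{R}$), a direct expansion gives the identity
\begin{equation*}
H_u - H_v \;=\; u^{\ast}Xu - v^{\ast}Xv \;=\; \Re\bigl( (u+v)^{\ast} X (u-v) \bigr).
\end{equation*}
This turns a quadratic increment into a real bilinear form in $a := u+v$ and $b := u-v$. Writing $X_{jk} = x_{jk} + i y_{jk}$ with $x_{jj}$ real (and $y_{jj} = 0$), I expand $\Re(a^{\ast}Xb)$ as a linear combination of the mutually independent centered sub-gaussians $\{x_{jj}\}_j \cup \{x_{jk}, y_{jk}\}_{j<k}$, each with $\psi_2$-norm at most $1$. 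The resulting coefficients are $\Re(\bar a_j b_j)$, $\Re(\bar a_j b_k + \bar a_k b_j)$, and $\Im(\bar a_j b_k - \bar a_k b_j)$. Bounding each by $|a_j||b_j|$ or $|a_j||b_k| + |a_k||b_j|$ and applying the standard sub-gaussian sum inequality yields
\begin{equation*}
\| H_u - H_v \|_{\psi_2}^2 \;\lesssim\; \sum_{j,k} |a_j|^2 |b_k|^2 \;=\; \|u+v\|_2^2 \, \|u-v\|_2^2 \;\le\; 4\,\mathrm{rad}(T)^2 \, \|u-v\|_2^2.
\end{equation*}

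Next, for the Gaussian majorant, take $G_u := 2\,\mathrm{rad}(T)\,\Re\langle u, g\rangle$ where $g$ is a standard complex Gaussian in $\mathbb{C}^n$. Then $G_u$ is centered, and since $\mathbb{E}\,|\Re\langle u-v, g\rangle|^2 = \|u-v\|_2^2$, we have $\|G_u - G_v\|_{L^2} = 2\,\mathrm{rad}(T)\,\|u-v\|_2$. Combined with the previous display this supplies the increment comparison \eqref{ineqn::increment} of Theorem~\ref{thm::subgCmp} with an absolute constant $M$. Separability of the index set ensures measurability of the suprema. Fernique-Talagrand then gives
\begin{equation*}
\mathbb{E}\sup_{u \in T} H_u \;\lesssim\; \mathbb{E}\sup_{u \in T} G_u \;=\; 2\,\mathrm{rad}(T)\cdot\mathbb{E}\sup_{u \in T}\Re\langle u, g\rangle \;\le\; 2\,\mathrm{rad}(T)\cdot w(T),
\end{equation*}
where the last step uses $\Re\langle u, g\rangle \le |\langle u, g\rangle|$.

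The only mildly delicate step is the sub-gaussian computation for $\Re(a^{\ast}Xb)$: one must correctly collect the contributions of $\Re(X_{jk})$ and $\Im(X_{jk})$ so that the Hermitian symmetry does not spoil independence, and then verify that the total $\ell^2$-mass of the coefficients is $O(\|a\|_2^2 \|b\|_2^2)$. Once this bilinearization is in place the rest of the argument is a routine application of Fernique-Talagrand, and everything else (centering of $H_u$, the factor $2\,\mathrm{rad}(T)$ in $G_u$, and the passage from $\Re\langle u, g\rangle$ to $w(T)$) is bookkeeping.
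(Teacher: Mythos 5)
Your proposal is correct and takes essentially the same route as the paper: expand the increment $H_u - H_v$ as a linear combination of the independent sub-gaussian entries of $X$, bound its $\psi_2$-norm by $\lesssim \rad(T)\,\|u-v\|_2$, and apply Fernique--Talagrand against the Gaussian process $\Re\langle u, g\rangle$. The only cosmetic difference is that you use the symmetric bilinearization $H_u - H_v = \Re\bigl((u+v)^{*}X(u-v)\bigr)$ to organize the coefficients, whereas the paper expands directly in terms of $|u_j|^2 - |v_j|^2$ and $\bar u_j u_k - \bar v_j v_k$ and passes through $\|uu^{*}-vv^{*}\|_F \le 2\rad(T)\|u-v\|_2$; the two are algebraically equivalent (indeed $uu^{*}-vv^{*}=\tfrac12(ab^{*}+ba^{*})$ with $a=u+v$, $b=u-v$).
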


From Chevet's inequality, we can give a sharp bound on the matrix norm $\| X \|_{p',p}$. Here $p'$ is the conjugate of $p$, meaning $1/p + 1/p' = 1$. 

\begin{thm}\label{thm::opNorm}
Suppose $n \ge 2$, and $p \in [2,\infty)$. Then\\
(i) %There is a constant $\sqrt{p}\,$ depending on $p$ such that
\begin{equation*}
\bb{E} \| X \|_{p',p} \lesssim \sqrt{p}\, n^{1/p}.
\end{equation*}
(ii) With probability $1 - O(n^{-4})$,
\begin{equation*}
| \| X \|_{p',p} - \bb{E} \| X \|_{p',p}  | \lesssim \log n.
\end{equation*}
As a consequence, $\| X \|_{p',p} \lesssim \sqrt{p \log n}\, n^{1/p}$ holds with probability $1 - O(n^{-4})$.
\end{thm}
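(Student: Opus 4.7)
The plan is to reduce $\|X\|_{p',p}$ to the supremum of a quadratic form indexed by the unit ball $B_{p'} := \{u \in \bb{C}^n : \|u\|_{p'} \le 1\}$, apply Chevet's inequality (Theorem \ref{thm::Chevet}) to this quadratic form, and then add a concentration tail via a convex Lipschitz argument for part (ii).

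For part (i), I would begin with the duality identity $\|X\|_{p',p} = \sup\{|v^*Xu| : u, v \in B_{p'}\}$ and then invoke a polarization identity: since $X = X^*$, the scalar $v^*Xu$ is a linear combination of the four quadratic forms $w^*Xw$ with $w \in \{u\pm v,\, u\pm iv\}$, each satisfying $\|w\|_{p'} \le 2$. This yields $\|X\|_{p',p} \lesssim \sup_{u \in B_{p'}} |u^*Xu|$. Writing $|u^*Xu| \le \max(u^*Xu,\, u^*(-X)u)$ and applying Chevet's inequality separately to $X$ and to $-X$ (which also has independent centered subgaussian entries in its upper triangular part) gives $\bb{E}\|X\|_{p',p} \lesssim w(B_{p'}) \cdot \rad(B_{p'})$. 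For $p \in [2,\infty)$ we have $p' \in (1, 2]$ and hence $\|u\|_2 \le \|u\|_{p'}$, so $\rad(B_{p'}) \le 1$; by duality, $w(B_{p'}) = \bb{E}\|g\|_p$, and the routine moment calculation $\bb{E}\|g\|_p \le (n\,\bb{E}|g_1|^p)^{1/p} \lesssim \sqrt{p}\, n^{1/p}$ closes part (i).

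For part (ii), the map $X \mapsto \|X\|_{p',p}$ is a matrix norm, hence convex, and it is $1$-Lipschitz with respect to the Frobenius norm. Indeed, the chain $\|X\|_{p',p} \le \|X\|_2 \le \|X\|_F$ uses $\|u\|_2 \le \|u\|_{p'}$ on the input side and $\|Xu\|_p \le \|Xu\|_2$ on the output side (both relying on $p \ge 2$). Since the upper-triangular real and imaginary parts of $X$ are jointly independent with subgaussian norm bounded by $1$, Talagrand's concentration inequality for convex Lipschitz functions of independent subgaussian variables yields $\bb{P}\bigl( | \|X\|_{p',p} - \bb{E}\|X\|_{p',p} | > t \bigr) \le C \exp(-c t^2)$. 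Taking $t$ a sufficiently large constant multiple of $\sqrt{\log n}$ produces the deviation bound $\lesssim \sqrt{\log n} \le \log n$ with probability $1 - O(n^{-4})$. Combining this with part (i) and the elementary inequality $\log n \lesssim \sqrt{p\log n}\, n^{1/p}$ (which holds for all $p \ge 2$ and $n \ge 2$, since $\min_{p \ge 2} \sqrt{p}\, n^{1/p} \asymp \sqrt{\log n}$) yields the stated consequence.

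The most delicate step is the polarization-plus-Chevet reduction handling the absolute-value supremum cleanly; I resolve this by separately bounding $\sup \langle u, Xu\rangle$ and $\sup \langle u, -Xu\rangle$, both of which fall under the hypotheses of Theorem \ref{thm::Chevet}. Obtaining the sharp $\sqrt{p}\,n^{1/p}$ scaling of $\bb{E}\|g\|_p$ is a standard moment computation, and the invocation of Talagrand's convex Lipschitz concentration is essentially a black box, so I do not foresee serious technical obstacles beyond careful bookkeeping of absolute constants.
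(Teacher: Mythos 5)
Your proof of part (i) is correct and follows the paper's argument essentially verbatim: duality plus polarization to reduce to the quadratic form $\sup_{u \in B_{p'}}|u^*Xu|$, Chevet's inequality applied to $\pm X$, and the moment computation $\bb{E}\|g\|_p \lesssim \sqrt{p}\,n^{1/p}$.

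Part (ii), however, has a genuine gap. You invoke ``Talagrand's concentration inequality for convex Lipschitz functions of independent subgaussian variables'' to conclude a tail bound $\bb{P}(|\,\|X\|_{p',p} - \bb{E}\|X\|_{p',p}\,| > t) \le C\exp(-ct^2)$. No such result holds: Talagrand's concentration inequality for convex Lipschitz functions requires the underlying independent random variables to be \emph{bounded} (say, taking values in an interval of length $R$), and the resulting tail scales as $\exp(-ct^2/(L R)^2)$ where $L$ is the Lipschitz constant. Subgaussianity alone does not suffice --- there is no known off-the-shelf convex-Lipschitz concentration inequality at the subgaussian level of generality. A tell-tale symptom is that your argument would yield a deviation of order $\sqrt{\log n}$, \emph{strictly better} than the $\log n$ asserted in the theorem; the extra $\sqrt{\log n}$ in the paper's bound is not slack but precisely the price of repairing this gap.

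The paper handles this by first passing to a truncated matrix $\tilde{X}$ whose upper-triangular entries are coordinatewise bounded by $\tau\sqrt{\log n}$ (with $\tau$ a fixed constant), applies Talagrand's inequality to $\tilde{X}$, and then controls the two discrepancies $|\bb{E}\|\tilde{X}\|_{p',p} - \bb{E}\|X\|_{p',p}|$ and $\bb{P}(\tilde{X} \neq X)$ via the subgaussian tail. Since the truncated variables live on a scale $\tau\sqrt{\log n}$, Talagrand gives $\bb{P}(|\,\|\tilde{X}\|_{p',p} - \bb{E}\|\tilde{X}\|_{p',p}\,| \ge t\,\tau\sqrt{\log n}) \le C\exp(-ct^2)$, and plugging in $t \asymp \sqrt{\log n}$ produces the deviation scale $\log n$, not $\sqrt{\log n}$. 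Your convex-Lipschitz reduction $\|X\|_{p',p} \le \|X\|_2 \le \|X\|_F$ is a clean way to get the Lipschitz constant and can replace the paper's polarization-based Lipschitz bound, but the truncation step is essential and cannot be bypassed.
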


An easy consequence of this theorem is a vector norm bound as follows.

\begin{cor}\label{cor::conctrNorm}
Suppose $p \in [2, \infty)$, and $\xi \in \bb{C}^n$ is a random vector with independent entries. Assume $\| \Re(\xi_j) \|_{\psi_2} \le 1,  \| \Im(\xi_j) \|_{\psi_2} \le 1$ for all $j \le n$. Then, with probability $1 - O(n^{-4})$, 
\begin{equation*}
\| \xi \|_p \lesssim \sqrt{p \log n}\, n^{1/p}.
\end{equation*}
\end{cor}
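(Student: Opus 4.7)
The plan is to deduce the corollary from Theorem \ref{thm::opNorm} by a one-line embedding that places $\xi$ into a single row of a larger Hermitian matrix. Specifically, I would introduce the matrix $Y \in \bb{C}^{(n+1) \times (n+1)}$ whose only non-zero entries are $Y_{1,j+1} = \xi_j$ and $Y_{j+1,1} = \overline{\xi_j}$ for $j = 1,\ldots,n$. Then $Y$ is Hermitian, and the entries in its strict upper triangle consist of $\xi_1,\ldots,\xi_n$ (in row $1$) together with deterministic zeros elsewhere. These are jointly independent (zeros are independent of everything), and both real and imaginary parts have subgaussian norm at most $1$. The zero-mean condition required in Theorem \ref{thm::opNorm} is not stated in the corollary, but after subtracting $\bb{E}\xi$ one may assume $\xi$ is centered: the deterministic shift has $\ell^p$ norm at most $2n^{1/p}$, since $|\bb{E}\xi_j| \le \|\Re(\xi_j)\|_{\psi_2} + \|\Im(\xi_j)\|_{\psi_2} \le 2$, and this is absorbed into the final bound.

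Next I would pass from the matrix norm of $Y$ back to the $\ell^p$ norm of $\xi$ by testing $Y$ against $u = e_1 \in \bb{C}^{n+1}$, which satisfies $\|e_1\|_{p'} = 1$. Since $Y e_1 = (0, \overline{\xi_1}, \ldots, \overline{\xi_n})^T$, one has $\|Y e_1\|_p = \|\xi\|_p$, hence
\begin{equation*}
\|\xi\|_p \le \|Y\|_{p', p}.
\end{equation*}
Applying Theorem \ref{thm::opNorm}(ii) to the $(n+1) \times (n+1)$ matrix $Y$ gives
\begin{equation*}
\|\xi\|_p \le \|Y\|_{p', p} \lesssim \sqrt{p \log(n+1)}\,(n+1)^{1/p} \lesssim \sqrt{p \log n}\,n^{1/p}
\end{equation*}
with probability $1 - O((n+1)^{-4}) = 1 - O(n^{-4})$, which is exactly the claimed bound.

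There is essentially no hard step here: the corollary is a direct repackaging of Theorem \ref{thm::opNorm} in the special case where the Hermitian matrix has only one nontrivial row, so the $\ell^{p'} \to \ell^p$ operator norm collapses to the $\ell^p$ norm of that row. The only things requiring any verification are the hypotheses of Theorem \ref{thm::opNorm} for $Y$ (independence, subgaussianity, and centering), each of which is immediate from the construction and the observation about $|\bb{E}\xi_j|$ noted above.
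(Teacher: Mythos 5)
Your construction is essentially the paper's: embed $\xi$ as the first row/column of a Hermitian $(n+1)\times(n+1)$ matrix and invoke Theorem~\ref{thm::opNorm}. You obtain $\|\xi\|_p \le \|Y\|_{p',p}$ by testing against $e_1$ (first column), whereas the paper looks at the first coordinate of $Xu$ and uses $L^p$ duality; these are interchangeable. Your explicit centering step is a reasonable addition that the paper's own proof leaves implicit.

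There is one small hypothesis gap. The corollary only assumes the complex variables $\xi_1,\dots,\xi_n$ are independent of one another; it does not say $\Re(\xi_j)$ and $\Im(\xi_j)$ are independent for the same $j$. Theorem~\ref{thm::opNorm} inherits the standing regime of Section~\ref{sec::pre}, under which the real and imaginary parts of the upper-triangular entries must be \emph{jointly} independent (this is used in the proof of Chevet's inequality when bounding $\|X_u - X_v\|_{\psi_2}$ as a linear combination of independent subgaussians). So the claim that the upper-triangular entries of $Y$ ``are jointly independent'' is not justified by the corollary's hypotheses alone: $\Re(\xi_j)$ and $\Im(\xi_j)$ could be arbitrarily coupled. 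The paper dispatches this in its opening line by reducing to a real vector, i.e.\ bounding $\|\xi\|_p \le \|\Re(\xi)\|_p + \|\Im(\xi)\|_p$ and treating each real vector separately, after which there are no imaginary parts to couple. Inserting that reduction before your embedding closes the gap; otherwise the argument goes through as you wrote it.
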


\subsection{Eigenvalue Perturbation}\label{sec::eigvalPtb}

Suppose $n \ge 2$ and $\mu_1, \mu_2 , \ldots , \mu_n$ are all positive numbers.  Let us denote $D_\mu = \diag( \mu_1,\ldots, \mu_n )$, i.e., a diagonal matrix consisting of $\mu_1,\ldots \mu_n$ on its diagonal. We will use notation $\ddiag(D_\mu^{-1})$ to extract the diagonal of $D_\mu^{-1}$ into an $n$-dimensional vector. 

Our goal is to establish $X \preceq D_\mu$ with high probability under some condition on $\{ \mu_j\}_{j=1}^n$. Apparently, when $D_\mu$ is a multiple of identity matrix (i.e., $\mu_1 = \ldots = \mu_n$), $X \preceq D_\mu$ is equivalent to $\lambda_{\max}(X) \le \mu_1$, and this problem has been studied in a wealth of papers (e.g., \citep{RudVer10,Ver10}). Allowing $D_{\mu}$ to be inhomogeneous (non-identical $\mu_j$'s), our quest can be regarded as a generalization of bounding the largest eigenvalue of a random matrix. 

Suppose $c_0' > 0$ is a suitably small absolute constant. We need the following condition on $\{ \mu_j\}_{j=1}^n$ that controls the fluctuation of $X$. This condition is almost identical to Assumption \ref{assmp::lambda} except that $d(\lambda)$ is replaced by $\ddiag(D_{\mu}^{-1})$. 
\begin{assmp}\label{assmp::mu}
Suppose either of the following: (1) there exists $p \in [2, \infty)$ such that  
\begin{equation}\label{ineqn::eigvalCon-2}
\sqrt{p\log n}\, n^{1/p}\| \ddiag(D_{\mu}^{-1}) \|_{p/(p-2)} \le c_0', 
\end{equation}
(2)
\begin{equation}\label{ineqn::eigvalCon2-2}
\log n \,\| \ddiag(D_{\mu}^{-1}) \|_{1} \le c_0'.
\end{equation}
\end{assmp}
Note that in fact (\ref{ineqn::eigvalCon2-2}) is a special case of (\ref{ineqn::eigvalCon-2}) up to some absolute constant (which we discussed in Section \ref{sec::main}). To see this, we set $p = 4\log n$ and use the fact that $n^{1/p} \asymp 1$, $\| u \|_{p/(p-2)} \asymp \| u \|_1$. The condition on $\| \ddiag(D_{\mu}^{-1}) \|_{p/(p-2)}$ aims to control the fluctuation of $X$ in an inhomogeneous way, where $p$ offers flexibility of choices that can be tailed to different settings of $\{ \mu_j \}_{j=1}^n$. With $p=2$, this condition encompasses the homogenous case (namely $\mu_j$'s are identical), though an additional $\sqrt{\log n}$ factor appears.

We will state our result for eigenvalue perturbation in a slightly more general form, which is useful in Section \ref{sec::match}. Suppose $\tau \in [0, 1]$ is any fixed number, and $g \in \bb{C}^n$ is a random vector whose entries have jointly independent subgaussian real parts and imaginary parts, with $\| \Re(g_j) \|_{\psi_2} \le 1, \| \Im(g_j) \|_{\psi_2} \le 1$ for any $j$, and $g$ is independent of $X$. 

\begin{thm}\label{thm::eigvalPtb}
Suppose $n \ge 2$. Under Assumption \ref{assmp::mu}, the following holds with probability $1 - O(n^{-4})$:
\begin{equation}\label{ineqn::eigval1}
z^* X z + \tau \| z \|_2 \Re(g^* z) \le z^* D_\mu z, \qquad \forall z \in \bb{C}^n. 
\end{equation}
In particular, 
\begin{equation*}
X \preceq D_{\mu}
\end{equation*}
holds with probability $1 - O(n^{-4})$. 
\end{thm}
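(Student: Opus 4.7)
The plan is to reduce~\eqref{ineqn::eigval1} to a single deterministic inequality in $z$, governed by the matrix norm $\|X\|_{p',p}$ and the vector norm $\|g\|_p$, both already controlled in the preceding subsection. Since every term is $2$-homogeneous in $z$, it suffices to bound each piece of the left-hand side by a fixed multiple of $z^* D_\mu z$ whose constants sum to strictly less than $1$; a small enough choice of $c_0'$ then closes the inequality. Throughout I work on the intersection of the events $\{\|X\|_{p',p} \lesssim \sqrt{p \log n}\, n^{1/p}\}$ (Theorem~\ref{thm::opNorm}) and $\{\|g\|_p \lesssim \sqrt{p \log n}\, n^{1/p}\}$ (Corollary~\ref{cor::conctrNorm}, applicable since $g$ is independent of $X$); a union bound gives this event probability $1 - O(n^{-4})$.

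The deterministic core is the H\"older-type estimate
\begin{equation*}
\| z \|_{p'}^{\,2} \le \| \ddiag(D_\mu^{-1}) \|_{p/(p-2)}\, z^* D_\mu z,
\end{equation*}
which I prove by writing $|z_j|^{p'} = (\mu_j |z_j|^2)^{p'/2}\,\mu_j^{-p'/2}$ and applying H\"older with conjugate exponents $2/p'$ and $2/(2-p')$; the algebraic identities $p'/(2-p') = p/(p-2)$ and $(2-p')/p' = (p-2)/p$ deliver exactly the $p/(p-2)$-norm of $\ddiag(D_\mu^{-1})$ appearing in Assumption~\ref{assmp::mu}. Combined with the standard duality $z^* X z \le \|z\|_{p'}\,\|X z\|_p \le \|X\|_{p',p}\,\|z\|_{p'}^{\,2}$ and Assumption~\ref{assmp::mu}(1), this yields $z^* X z \le C_1 c_0' \cdot z^* D_\mu z$ for an absolute constant $C_1$.

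For the cross term, H\"older gives $\Re(g^* z) \le \|g\|_p\,\|z\|_{p'}$, and since $p \ge 2$ we have $p' \le 2$ and hence $\|z\|_2 \le \|z\|_{p'}$ in finite dimensions, so $\|z\|_2 \Re(g^* z) \le \|g\|_p\,\|z\|_{p'}^{\,2}$. Applying the deterministic core once more together with the event on $\|g\|_p$ gives $\tau \|z\|_2 \Re(g^* z) \le C_2 c_0' \cdot z^* D_\mu z$. Summing the two contributions and taking $c_0' \le 1/(C_1+C_2)$ yields~\eqref{ineqn::eigval1} uniformly in $z$, and $X \preceq D_\mu$ follows by setting $\tau = 0$. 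Case~(2) of Assumption~\ref{assmp::mu} reduces to case~(1) with $p = 4\log n$, as noted below the assumption, since then $n^{1/p} \asymp 1$ and $\|\cdot\|_{p/(p-2)} \asymp \|\cdot\|_1$ up to absolute constants. The one step requiring actual care is the exponent arithmetic matching $\|z\|_{p'}^{\,2}$ to $\|\ddiag(D_\mu^{-1})\|_{p/(p-2)}$; once that identification is in place, the probabilistic input is supplied entirely by Theorem~\ref{thm::opNorm} and Corollary~\ref{cor::conctrNorm}.
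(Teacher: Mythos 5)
Your proof is correct, and it takes a genuinely different route from the paper's. The key deterministic estimate $\|z\|_{p'}^{\,2} \le \|\ddiag(D_\mu^{-1})\|_{p/(p-2)}\, z^* D_\mu z$ checks out: writing $|z_j|^{p'}=(\mu_j|z_j|^2)^{p'/2}\mu_j^{-p'/2}$ and applying H\"older with exponents $2/p'$ and $2/(2-p')$ gives exactly this (using $p'/(2-p')=p/(p-2)$ and $(2-p')/p'=(p-2)/p$, with the $p=2$ boundary case reducing to the trivial bound via $\|\cdot\|_\infty$). Combined with $z^* X z \le \|X\|_{p',p}\|z\|_{p'}^{\,2}$, the cross-term bound via $\|z\|_2 \le \|z\|_{p'}$, Theorem~\ref{thm::opNorm}, and Corollary~\ref{cor::conctrNorm}, the whole inequality closes once $c_0'$ is small.

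The paper instead runs a Dudley chaining argument: it reformulates~\eqref{ineqn::eigval1} as $\sup_{z^*D_\mu z \le \mu_n} X_z \le \mu_n$, bounds the entropy integral of the ellipsoid $\Omega$ via Dumer's ellipsoid-covering bound (Lemma~\ref{lem::covering}), and only at the end applies a H\"older step to reach the quantity $\|\ddiag(D_\mu^{-1})\|_{p/(p-2)}$. Your approach is considerably more elementary: it avoids chaining and the covering-number lemma entirely, and the probabilistic input is reduced to the two norm bounds already established in the preceding subsection. What the paper's entropy-integral bound buys, in principle, is a somewhat sharper intermediate quantity $\sqrt{\log n\cdot\mu_n}\sum_k((n-k)\mu_k)^{-1/2}$ before the H\"older reduction — in some spectra (e.g.\ $\mu_j \asymp (n+1-j)\log^3 n$) this is smaller than your $\sqrt{p\log n}\,n^{1/p}\|\ddiag(D_\mu^{-1})\|_{p/(p-2)}$ by polylog factors — but since the theorem is stated under Assumption~\ref{assmp::mu}, which is already expressed in terms of $\|\ddiag(D_\mu^{-1})\|_{p/(p-2)}$, that extra sharpness is not visible in the statement and your proof suffices for it. Two minor notational points: Lemma~\ref{lem::diagMatNorm} essentially packages the same exponent arithmetic you carry out by hand; and the independence of $g$ from $X$ is not needed for the union bound, only for the context in which the theorem is later applied.
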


\begin{rmk}
As we have seen in examples in Section \ref{sec::main}, when $p$ is large, the smallest element of $\{ \mu_j \}_{j=1}^n$ can be as small as $O(\polylog)$, which is vanishing compared to $\| X \|_2$. For example, if $\mu_j = C(n+1-j)\log^3 n$ where $C$ is a large constant, then (\ref{ineqn::eigvalCon2-2}) is satisfied.

Theorem \ref{thm::eigvalPtb} also generalizes Weyl's inequality in terms of top eigenvalues in the random setting. To see this point, suppose $\lambda_1 \ge \ldots \ge \lambda_n$, and $\mu_j = \lambda_1 - \lambda_{j+1}$. Rearranging $X \preceq D_{\mu}$, we deduce that the top eigenvalue of $\diag(\lambda_2,\ldots,\lambda_n) + X$ is no greater than $\lambda_1$. This is more general than Weyl's inequality in the sense that, with high probability, the perturbation $X$ shifts the top eigenvalue upward by no more than $\lambda_1 - \lambda_2$, which can be much smaller than $\| X \|_2$.
\end{rmk}
 
\vspace{2mm}

The proof of Theorem \ref{thm::eigvalPtb} is based on the chaining argument (\cite{dudley1967sizes}; see \cite{Van14} for a nice treatment). It turns out that we need to bound the covering number of an ellipsoid because $\mu_1,\ldots,\mu_n$ are not identical. Let $E_a$ be an ellipsoid in $\bb{R}^m$ with the lengths of axes$a_1,a_2,\ldots,a_m$, and $N(E_a)$ be the minimum number of unit balls that cover $E_a$. The following bound is from \cite{Dum04} and \cite{Dum06} (also see \cite{dumer2007covering}).

\begin{lem}\label{lem::covering}
For any $\theta \in (0,1/2)$, the covering number of any ellipsoid $E_a$ satisfies
\begin{equation}
\log N(E_a) \le |J| \log h(a) + |J_{\theta}| \log (\bar{C}/\theta),
\end{equation}
where $\bar{C}$ is a constant, $J = \{j: a_j > 1\}, J_{\theta} = \{j: a_j^2 \ge 1- \theta \}$ and $h(a) = \prod_{j \in J}a_j^{1/|J|}$.
\end{lem}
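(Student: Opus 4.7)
The plan is to obtain the cover by projecting $E_a$ onto the two coordinate blocks indexed by $J_\theta$ and its complement, bounding each piece separately, and then gluing the two covers via the Pythagorean relation $\theta+(1-\theta)=1$ so that the product gives a unit cover of $E_a$. Intuitively, the coordinates with ``tiny'' axes ($j \notin J_\theta$) contribute negligibly to the diameter of $E_a$ and are handled for free, while the remaining coordinates admit a clean volumetric bound after a single rescaling.

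Concretely, let $\pi$ and $\pi^\perp$ denote the coordinate projections onto $J_\theta$ and $J_\theta^c$. For every $x \in E_a$ the defining inequality $\sum_j (x_j/a_j)^2 \le 1$, combined with $a_j^2 < 1-\theta$ for $j \notin J_\theta$, gives
\begin{equation*}
\| \pi^\perp(x) \|_2^2 \;=\; \sum_{j \notin J_\theta} a_j^2 \cdot (x_j/a_j)^2 \;\le\; (1-\theta) \sum_{j \notin J_\theta} (x_j/a_j)^2 \;\le\; 1-\theta,
\end{equation*}
so $\pi^\perp(E_a)$ lies in a single ball of radius $\sqrt{1-\theta}$. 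Because the two coordinate blocks are orthogonal, a $\sqrt{\theta}$-net of $\pi(E_a)$ together with the trivial $\sqrt{1-\theta}$-net of $\pi^\perp(E_a)$ combine into a unit-radius net of $E_a$, and hence $N(E_a) \le N(\pi(E_a),\,\sqrt{\theta})$.

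It remains to cover the ellipsoid $\pi(E_a) \subset \bb{R}^{|J_\theta|}$, whose axes are $\{a_j\}_{j \in J_\theta}$, by balls of radius $\sqrt{\theta}$. Rescaling by $1/\sqrt{\theta}$ reduces this to covering $E_{\tilde a}$ with $\tilde a_j = a_j/\sqrt{\theta}$ by unit balls, and for $\theta\le 1/2$ every $\tilde a_j$ satisfies $\tilde a_j \ge \sqrt{(1-\theta)/\theta}\ge 1$. For such a ``fat'' ellipsoid one has the Minkowski inclusion $E_{\tilde a} + \tfrac12 B \subseteq \tfrac32 E_{\tilde a}$ (since $\tfrac12 B \subseteq \tfrac12 E_{\tilde a}$), so the standard packing-covering inequality yields
\begin{equation*}
N(E_{\tilde a},\,1) \;\le\; \frac{\mathrm{vol}(E_{\tilde a} + \tfrac12 B)}{\mathrm{vol}(\tfrac12 B)} \;\le\; 3^{|J_\theta|}\prod_{j \in J_\theta}\tilde a_j \;=\; \Big(\frac{3}{\sqrt{\theta}}\Big)^{|J_\theta|}\prod_{j \in J_\theta} a_j.
\end{equation*}
Since $a_j \le 1$ for $j \in J_\theta\setminus J$, the last product is bounded by $\prod_{j\in J} a_j = h(a)^{|J|}$, and taking logarithms delivers the stated bound with $\bar C$ an absolute constant (any $\bar C \ge 9$ suffices).

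The main obstacle I anticipate is matching the geometric and algebraic parameters: the split radius must be exactly $\sqrt{\theta}$ so that after rescaling $\pi(E_a)$ every residual axis $\tilde a_j$ is at least $1$, because that is precisely what enables the Minkowski inclusion $E_{\tilde a} + \tfrac12 B \subseteq \tfrac32 E_{\tilde a}$ and hence the clean product formula $3^{|J_\theta|} \prod \tilde a_j$. Everything else is essentially bookkeeping, but this calibration is what forces the constraint $\theta < 1/2$ and yields the sharp form $|J|\log h(a) + |J_\theta|\log(\bar C/\theta)$ rather than a looser bound with extraneous dimensional factors.
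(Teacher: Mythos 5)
Your proof is correct and self-contained. The paper does not supply its own proof of this lemma — it is cited directly from Dumer's work on ellipsoid covering — so there is no in-paper argument to compare against. Your route is a clean, elementary derivation: split the coordinates at $J_\theta$, observe that the projection onto $J_\theta^c$ sits inside a ball of radius $\sqrt{1-\theta}$, glue that to a $\sqrt{\theta}$-net of the $J_\theta$-projection via the Pythagorean identity $\theta + (1-\theta) = 1$ to obtain a unit cover of $E_a$, then rescale by $1/\sqrt{\theta}$ so that every surviving axis $\tilde a_j = a_j/\sqrt{\theta}$ exceeds $1$ and apply the standard volume/packing estimate. Two small remarks. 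First, your argument actually delivers the sharper term $|J_\theta|\log(3/\sqrt{\theta})$; the stated form follows a fortiori since $3/\sqrt{\theta} \le \bar C/\theta$ for any $\bar C \ge 3$ and $\theta \le 1$, so $\bar C = 3$ suffices. Second, you correctly locate where $\theta < 1/2$ is needed: it forces $(1-\theta)/\theta > 1$ so that $\tilde a_j \ge 1$ for all $j \in J_\theta$, which is exactly what makes $\tfrac12 B \subseteq \tfrac12 E_{\tilde a}$ and hence the fattening inclusion $E_{\tilde a} + \tfrac12 B \subseteq \tfrac32 E_{\tilde a}$ legitimate, yielding the product bound $3^{|J_\theta|}\prod_{j\in J_\theta}\tilde a_j$ without extraneous dimensional factors.
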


We will pick $\theta = 1/4$ in the proof of Theorem \ref{thm::eigvalPtb}. The above lemma says $\log N(E_a)$ is upper bounded by $|J| \log h(a) + O(|J_{\theta}|)$. \cite{Dum04} also provides a lower bound that shows $\log N(E_a) \ge  |J| \log h(a)$. The extra term $O(|J_{\theta}|)$, as is shown in the proof, is easy to control once we know how to bound $ |J| \log h(a)$.

\section{Eigenvector and Quadradic Equations}
Recall the spectral decomposition of $A$:
\begin{equation*}
A = \sum_{j=1}^n \lambda_j u_j u_j^*,
\end{equation*}
where $\lambda_1 \ge \lambda_2 \ge \ldots \ge \lambda_n$. To ease notations, henceforth we will drop the subscript and simply write $u$ (or $\tilde{u}$) to denote the leading eigenvectors. Let us denote $U = (u, U_\bot)$, where $U_\bot = (u_2,\ldots,u_n)$ is a $n \times (n-1)$ matrix. We are interested in the leading eigenvector $\tilde{u}$ of the perturbed Hermitian matrix $\tilde{A} = A + E$. Denote
\begin{equation} \label{eqn::EblockMat}
\tilde{E} = (u, U_\bot)^* E (u, U_\bot) = \left( \begin{array}{cc}
u^* E u & u^* E U_\bot \\ U_\bot^* E u & U_\bot^* E U_\bot \end{array} \right) =: 
\left( \begin{array}{cc}
E_{11} & E_{12} \\ E_{21} & E_{22} \end{array} \right).
\end{equation}

Let us also denote
\begin{align}
\tilde{u} &= (u + U_\bot q ) ( 1 + \| q \|_2^2)^{-1/2}, \label{eqn::tildeu}\\
\tilde{U}_\bot &= (U_\bot - u q^* ) ( I_{n-1} + qq^*)^{-1/2}, \label{eqn::tildeuBot}
\end{align}
where $q \in \bb{C}^{n-1}$. It is easy to check that $(\tilde{u}, \tilde{U}_\bot)$ forms an orthogonal basis. Our task is to study the $q$ for which $\tilde{u}$ is the leading eigenvector of $\tilde{A}$. 

If $\tilde{u}$ is an eigenvector of $\tilde{A}$, then we can find other orthogonal $n-1$ eigenvectors in the space spanned by $\tilde{U}_\bot$. Thus, we expect the necessary condition $\tilde{U}_\bot^* \tilde{A} \tilde{u} = 0$. It is also a sufficient condition, as shown in the next theorem. This leads to a system of quadratic equations (\ref{eqn::quad}).

\begin{thm}\label{thm::quadEqn}
Suppose $q \in \bb{C}^{n-1}$ satisfies
\begin{equation}\label{eqn::quad}
\cal{L} q = E_{21} - qE_{12}q,
\end{equation}
where
\begin{equation*}
\cal{L} := (\lambda_1 + E_{11}) I_{n-1} - ( \diag(\lambda_2,\ldots, \lambda_n) + E_{22} ).
\end{equation*}
Then $\tilde{u}$, defined in (\ref{eqn::tildeu}), is an eigenvector of $\tilde{A}$.
\end{thm}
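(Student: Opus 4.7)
The plan is a direct verification: the theorem claims that solving a certain quadratic equation in $q$ guarantees that the candidate vector $\tilde{u}$ is an eigenvector of $\tilde{A}$. Since $(\tilde{u}, \tilde{U}_\bot)$ is an orthonormal basis (a one-line check using $U_\bot^* u = 0$, $u^* u = 1$, $U_\bot^* U_\bot = I_{n-1}$, and the symmetric factor $(I+qq^*)^{-1/2}$), showing $\tilde{u}$ is an eigenvector is equivalent to showing $\tilde{U}_\bot^* \tilde{A}\tilde{u} = 0$. So the entire task reduces to computing this quantity and recognizing the quadratic equation on the nose.

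First I would expand $\tilde{A}$ in the basis $(u, U_\bot)$ using the block form (\ref{eqn::EblockMat}):
\begin{equation*}
\tilde{A}(u + U_\bot q) = u\bigl[(\lambda_1 + E_{11}) + E_{12} q\bigr] + U_\bot\bigl[E_{21} + (\Lambda_0 + E_{22})q\bigr],
\end{equation*}
where $\Lambda_0 = \diag(\lambda_2,\ldots,\lambda_n)$. The scalar $(1+\|q\|_2^2)^{-1/2}$ in the definition of $\tilde{u}$ is an overall nonzero factor and can be ignored.

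Next I would apply $\tilde{U}_\bot^* = (I_{n-1} + qq^*)^{-1/2}(U_\bot^* - q u^*)$ on the left. The factor $(I_{n-1}+qq^*)^{-1/2}$ is invertible, so it suffices to show $(U_\bot^* - q u^*)\tilde{A}(u + U_\bot q) = 0$. Using orthogonality of $u$ and the columns of $U_\bot$, this collapses to
\begin{equation*}
\bigl[E_{21} + (\Lambda_0 + E_{22})q\bigr] - q\bigl[(\lambda_1 + E_{11}) + E_{12} q\bigr]
= E_{21} - \mathcal{L}q - q E_{12} q.
\end{equation*}
The hypothesis $\mathcal{L} q = E_{21} - q E_{12} q$ makes this vanish, which completes the proof.

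The only place any care is required is in keeping track of which objects are scalars versus vectors versus row vectors: $E_{11}$ is a scalar, $E_{12}$ is a $1 \times (n-1)$ row, $E_{21}$ is an $(n-1)$-column, and $E_{12} q$ is a scalar multiplying the vector $q$ in the term $q E_{12} q$. Once the bookkeeping is set up, there is no genuine obstacle—the computation is forced, and the quadratic form of the equation is precisely what is needed to cancel the cross term $q E_{12} q$ coming from the off-diagonal block $E_{12}$ acting on $U_\bot q$.
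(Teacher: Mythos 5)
Your proposal is correct and follows essentially the same route as the paper: both reduce the claim to checking $(U_\bot^* - q u^*)\tilde{A}(u + U_\bot q) = 0$ and observe this is exactly the quadratic equation. You spell out the block-basis expansion that the paper summarizes as "it can be checked," and your final step (from $\tilde{U}_\bot^*\tilde{A}\tilde{u}=0$, write $\tilde{A}\tilde{u}=\tilde{u}(\tilde{u}^*\tilde{A}\tilde{u})+\tilde{U}_\bot(\tilde{U}_\bot^*\tilde{A}\tilde{u})=\tilde{u}(\tilde{u}^*\tilde{A}\tilde{u})$) is a slightly more direct version of the paper's argument, which instead introduces a unitary $R$ diagonalizing $\tilde{U}_\bot^*\tilde{A}\tilde{U}_\bot$.
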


\begin{proof}
It can be checked that (\ref{eqn::quad}) is equivalent to 
\begin{equation*}
(U_\bot^* - qu^*) ( A + E) (u + U_\bot q) = 0,
\end{equation*}
or simply $\tilde{U}_\bot^* \tilde{A} \tilde{u} = 0$. Let $R \in \bb{C}^{(n-1) \times (n-1)}$ be a unitary matrix that diagonalizes $\tilde{U}_\bot^* \tilde{A}\tilde{U}_\bot$. Then it is clear that $(\tilde{u}, \tilde{U}_\bot R)$ diagonalizes $\tilde{A}$, and thus $\tilde{u}$ is an eigenvector of $\tilde{A}$.
\end{proof}
This theorem does not associate $\tilde{u}$ with the largest eigenvalue $\lambda_{\max}(\tilde{A})$ (or any particular eigenvalue), and therefore we have to match $\tilde{u}$ with $\lambda_{\max}(\tilde{A})$ with some additional efforts (see Section \ref{sec::match}). Differing from methods that analyze $\tilde{u}$ from its variational definition (e.g., \cite{Vu11}), our analysis approaches $\tilde{u}$ from equations, which was adopted in the proof of Davis-Kahan $\sin \Theta$ theorem \citep{DavKah70}. The use of Davis-Kahan theorem is commonly aided with Weyl's inequality $| \lambda_j(\tilde{A}) - \lambda_j(A)| \le \| E \|_2$, which controls the fluctuation of the top eigenvalue and helps match eigenvectors with eigenvalues. Thanks to Theorem \ref{thm::eigvalPtb}, which serves as a random counterpart of Weyl's inequality, we are able to match $\tilde{u}$ with $\lambda_{\max}(\tilde{A})$.

\section{Properties of $\cal{L}^{-1}$}\label{sec::L-1}

The random matrix $\cal{L} \in \bb{C}^{(n-1) \times (n-1)}$ is a linear operator defined in Theorem \ref{thm::quadEqn}---see (\ref{eqn::quad}). The properties of $\cal{L}^{-1}$ are important because, if the quadratic equations (\ref{eqn::quad}) are approximately linear, then $q$ is roughly given by $q \approx \cal{L}^{-1} E_{21}$. We will justify this heuristics in Section \ref{sec::quad}.

We begin with a few notational definitions. Let 
\begin{equation*}
D_\lambda = \diag(\lambda_1 - \lambda_2, \lambda_1  - \lambda_3, \ldots, \lambda_1   - \lambda_n), \qquad D = D_\lambda + E_{11} I_{n-1},
\end{equation*}
be diagonal matrices of size $n-1$. For $m = 1,\ldots, n-1$, let $E_{22}^{(m)}$ be a copy of $E_{22}$ except that the $m$'th row and $m$'th column is set to zero, and denote $\Delta E_{22}^{(m)} = E_{22} - E_{22}^{(m)}$. Similarly we can define $\cal{L}^{(m)}$:
\begin{equation*}
\cal{L} := D - E_{22}, \qquad \cal{L}^{(m)} := D - E_{22}^{(m)}.
\end{equation*}

It is easy to see that $D \succeq D_\lambda/2$ under Assumption \ref{assmp::lambda} (where $c_0$ is small enough), since $\delta \gg E_{11}$ with high probability. A rigorous statement is given in Lemma \ref{lem::indct}. By Theorem \ref{thm::eigvalPtb}, $\cal{L}$ is invertible with probability $1-O(n^{-4})$ (for small $c_0$). A more useful result is the following lemma, which is a consequence of contraction mapping theorem. Its condition holds with high probability (see Lemma \ref{lem::indct}), so we do not need to worry about invertibility of $\cal{L}$ (and also $\cal{L}^{(m)}$).

\begin{lem}\label{lem::L-1norm}
Suppose $n \ge 2$, $D$ is invertible, and $\| E_{22} D^{-1} \|_{p,p} \le 1/2$. Then, $\cal{L}$ is invertible, and for any $u \in \bb{C}^{n-1}$, we have 
\begin{equation*}
\| D \cal{L}^{-1} u \|_p \le 2\|u\|_p.
\end{equation*}
The same conclusion holds if $E_{22}$ is replaced by $E_{22}^{(m)}$, and $\cal{L}$ by $\cal{L}^{(m)}$.
\end{lem}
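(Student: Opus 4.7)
The plan is to rewrite $\cal{L}$ so that the assumption $\|E_{22}D^{-1}\|_{p,p}\le 1/2$ can be fed directly into a Neumann series argument. Since $D$ is invertible, observe that
\begin{equation*}
\cal{L} \;=\; D - E_{22} \;=\; (I_{n-1} - E_{22}D^{-1})\,D,
\end{equation*}
so that, assuming $I_{n-1} - E_{22}D^{-1}$ is invertible, we obtain the key identity
\begin{equation*}
D\cal{L}^{-1} \;=\; (I_{n-1} - E_{22}D^{-1})^{-1}.
\end{equation*}
Getting $D\cal{L}^{-1}$ on the left (rather than $\cal{L}^{-1}D$) is the reason for factoring $D$ out on the right; this is the one small bookkeeping point to be careful about.

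Next I would invoke the standard Neumann series: because $\|E_{22}D^{-1}\|_{p,p}\le 1/2<1$, the operator $I_{n-1}-E_{22}D^{-1}$ is invertible on $(\bb{C}^{n-1},\|\cdot\|_p)$, with
\begin{equation*}
(I_{n-1} - E_{22}D^{-1})^{-1} \;=\; \sum_{k=0}^{\infty} (E_{22}D^{-1})^{k},
\end{equation*}
and by submultiplicativity of $\|\cdot\|_{p,p}$,
\begin{equation*}
\|(I_{n-1}-E_{22}D^{-1})^{-1}\|_{p,p} \;\le\; \sum_{k=0}^{\infty} \|E_{22}D^{-1}\|_{p,p}^{k} \;\le\; \sum_{k=0}^{\infty} 2^{-k} \;=\; 2.
\end{equation*}
In particular $\cal{L} = (I_{n-1}-E_{22}D^{-1})D$ is the product of two invertible operators and is itself invertible, and applying the above bound to an arbitrary $u$ gives $\|D\cal{L}^{-1}u\|_p \le 2\|u\|_p$, which is the claim. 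The proof for $\cal{L}^{(m)}$ is identical since the hypothesis is symmetric in $E_{22}$ and $E_{22}^{(m)}$ (in applications one verifies the norm bound for each).

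There is no real obstacle here: the argument is just a two-line Neumann series estimate once the factorization is chosen correctly. The only content lies upstream, namely in showing that the hypothesis $\|E_{22}D^{-1}\|_{p,p}\le 1/2$ actually holds with high probability under Assumption \ref{assmp::lambda}; but that verification is the job of Lemma \ref{lem::indct} and is not part of the present lemma.
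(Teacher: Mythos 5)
Your proof is correct and follows essentially the same route as the paper: both use the factorization $\cal{L} = (I_{n-1} - E_{22}D^{-1})D$ together with the bound $\|E_{22}D^{-1}\|_{p,p} \le 1/2$. The paper phrases the invertibility via the contraction mapping theorem and gets the norm bound from a reverse triangle inequality on $(I_{n-1}-E_{22}D^{-1})D\cal{L}^{-1}u = u$, whereas you use the Neumann series; these are the same argument in two standard costumes.
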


Lemma \ref{lem::L-1norm} is not sufficient for our analysis, due to its deterministic nature. Suppose $E_{21}$ is a standard Gaussian vector and we set $u = E_{21}$, this lemma will lead to $\| D \cal{L}^{-1} E_{21} \|_p \le 2\|E_{21} \|_p \asymp n^{1/p}$ for any fixed $p$. There is no obvious way to convert the $\ell^p$ norm into $\ell^\infty$ norm while achieving the optimal bound $\| D \cal{L}^{-1} E_{21} \|_\infty = O(\sqrt{\log n})$ (with high probability).

Our solution is to look at $\cal{L}^{-1}$ from an iterative perspective. Our first useful observation is that, for any $y \in \bb{C}^{n-1}$, $x = D\cal{L}^{-1} y$ is the fixed point of a linear operator $\cal{T}$, where $\cal{T}x := E_{22}D^{-1} x + y$. In fact, $x = D\cal{L}^{-1} y \Leftrightarrow \cal{L} D^{-1}x = y \Leftrightarrow x = E_{22}D^{-1}x + y$, which is exactly $x = \cal{T}x$. This is also the underpinning of \textit{Jacobi iterative method} for numerically computing inverse of matrices.

This observation motivates us to study $D\cal{L}^{-1}y$ by looking at the sequence $\{ x^t \}$ generated by the iteration $x^{t+1} = \cal{T} x^t$. This algorithmic thinking, however, soon encounters a difficulty: we don't know how $x^t$ depends on $E_{22}D^{-1}$, which are both random. Analysis would be much easier if we could pretend that $E_{22}D^{-1}$ and $x^t$ are independent. 
%This algorithmic thinking is beneficial in that we can exploit independence by manipulating $E_{22}$ and $E_{22}^{(m)}$ in each iteration. \\

It turns out that the dependence is so weak as if $E_{22}D^{-1}$ and $x^t$ were independent. To justify it, we need to track $n$ sequences closely related to $\{x^t\}$. 
%Before stating our result, let us make additional notations. 
For $m = 1,\ldots, n-1$ and $t \ge 1$, given (possibly non-deterministic) $v^0, v^{0,m} \in \bb{C}^{n-1}$, let 
\begin{align}
v^{t+1} &= E_{22}D^{-1} v^t + v^0, \label{eqn::vtIter} \\
v^{t+1,m} &= E_{22}^{(m)}D^{-1} v^{t,m} + v^{0,m}. \label{eqn::vtmIter}
\end{align}
In this paper, the superscript $m$ over a vector or a matrix signifies that the random quantity (e.g., $v^{t,m}, E_{22}^{(m)}$) is independent of the $m$'th row and $m$'th column of $E_{22}$. Clearly, from the iterative definition, if $v^{0,m}$ is independent of the $m$'th row and $m$'th column of $E_{22}$, so does $v^{t,m}$ for all $t\ge 1$.

The key insight is that we can rewrite (\ref{eqn::vtIter}) into
\begin{equation*}
v^{t+1}_m = [E_{22}D^{-1} v^{t,m}]_m +  [E_{22}D^{-1} (v^t - v^{t,m})]_m + v^0_m.
\end{equation*}
where the subscripts mean the $m$'th coordinates. On the right-hand side, the first term is amenable for analysis because of independence. The second term concerns that proximity of $v^t$ and $v^{t,m}$, whose difference under $\ell^p$ norm, crucially, does not accumulate as $t \to \infty$. To understand this, we need this following lemma.

\begin{lem}\label{lem::indct}
(i) With probability $1 - O(n^{-4})$,
\begin{equation*}
\| \tilde{E} \|_{\infty} \le C_0 \sqrt{\log n},
\end{equation*}
where $C_0>0$ is an absolute constant. 

(ii) Under Assumption \ref{assmp::lambda}, for $p \in [2,\infty]$, with probability $1 - O(n^{-2})$, the following holds: $D_{jj} \ge [D_\lambda]_{jj}/2$ and thus $\| D^{-1} \|_{p,p'} \le 2 \| D_\lambda^{-1} \|_{p,p'}$; and furthermore, for any $m=1,\ldots n-1$,
\begin{equation}
\| E_{22} D^{-1} \|_{p,p} \le 1/2, \quad \| E_{22}^{(m)} D^{-1} \|_{p,p} \le 1/2, \quad \| D^{-1} E_{21} \|_{p'} \le 1/2, \label{ineqn::lemIndct1} 
\end{equation}
where $p'$ satisfies $1/p + 1/p' = 1$.

(iii) For any $m = 1,\ldots,n-1$, if a random vector $\xi \in \bb{C}^{n-1}$ is independent of $m$'th row and $m$'th column of $E_{22}$, then under Assumption \ref{assmp::lambda}, for any $p \in [2,\infty]$, with probability $1 - O(n^{-4})$, 
\begin{equation}\label{ineqn::sparse}
\| \Delta E_{22}^{(m)} D^{-1} \xi \|_p \le \| \xi \|_{\infty} / 12.
\end{equation}
\end{lem}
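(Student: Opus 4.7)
The three parts can be handled largely independently: (i) is a direct union bound, (ii) combines diagonal-operator computations with the operator norm bounds from Theorem~\ref{thm::opNorm} and Corollary~\ref{cor::conctrNorm}, and (iii) requires a careful row/column decomposition of $\Delta E_{22}^{(m)}$ together with a conditional sub-gaussian argument.

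For~(i), each entry $\tilde{E}_{jk}$ has $\|\Re(\tilde{E}_{jk})\|_{\psi_2}, \|\Im(\tilde{E}_{jk})\|_{\psi_2} \le 1$, so by the standard sub-gaussian tail bound $|\tilde{E}_{jk}| \le C_0\sqrt{\log n}$ with probability at least $1-n^{-6}$ once $C_0$ is a large enough absolute constant; a union bound over the $O(n^2)$ entries yields the claim.

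For~(ii), I would first verify $D_{jj} \ge [D_\lambda]_{jj}/2$. The assumption $\sqrt{p\log n}\,n^{1/p}\|d(\lambda)\|_{p/(p-2)} \le c_0$ together with $\|d(\lambda)\|_{p/(p-2)} \ge \|d(\lambda)\|_\infty = 1/\delta$ forces $\delta \gtrsim \sqrt{\log n}/c_0$, while $|E_{11}| = |\tilde{E}_{11}| \lesssim \sqrt{\log n}$ w.h.p.\ by sub-gaussian concentration, so $E_{11} \ge -\delta/2$ once $c_0$ is small. Diagonal monotonicity of the $(p,p')$ operator norm then gives $\|D^{-1}\|_{p,p'} \le 2\|D_\lambda^{-1}\|_{p,p'}$, and a direct H\"older computation shows $\|\diag(m)\|_{p,p'} = \|m\|_{p/(p-2)}$ for $p \ge 2$ with $1/p+1/p'=1$, hence $\|D_\lambda^{-1}\|_{p,p'} = \|d(\lambda)\|_{p/(p-2)}$. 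The three bounds in~(\ref{ineqn::lemIndct1}) follow from the submultiplicative estimates
\begin{equation*}
\|E_{22} D^{-1}\|_{p,p} \le \|E_{22}\|_{p',p}\,\|D^{-1}\|_{p,p'}, \qquad \|D^{-1} E_{21}\|_{p'} \le \|D^{-1}\|_{p,p'}\,\|E_{21}\|_p,
\end{equation*}
combined with Theorem~\ref{thm::opNorm} applied to $E_{22}$ and to $E_{22}^{(m)}$ (both are Hermitian with independent centered sub-gaussian upper-triangular entries of $\psi_2$ norm at most $1$) and Corollary~\ref{cor::conctrNorm} for $\|E_{21}\|_p$. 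Each resulting estimate is a constant multiple of $\sqrt{p\log n}\,n^{1/p}\|d(\lambda)\|_{p/(p-2)} \le c_0$, hence $\le 1/2$ for $c_0$ small enough; case~(2) of the assumption is analogous, essentially the limit $p \asymp \log n$.

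For~(iii), the key observation is that $\Delta E_{22}^{(m)}$ is supported on the $m$-th row and column. Letting $r := E_{22} e_m$ and using Hermiticity (so $[E_{22}]_{mm}$ is real), a direct computation gives
\begin{equation*}
\Delta E_{22}^{(m)} D^{-1}\xi = \eta_m\, r + \Bigl(\sum_{k \ne m} \overline{r_k}\, \eta_k\Bigr) e_m, \qquad \eta := D^{-1}\xi.
\end{equation*}
Since $D^{-1}$ depends only on $E_{11}$ and $\xi$ is independent of the $m$-th column of $E_{22}$ by hypothesis, so is $\eta$. The triangle inequality splits $\|\cdot\|_p$ into two pieces. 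For $|\eta_m|\|r\|_p$ I would use $|\eta_m| \le 2\|d(\lambda)\|_\infty\|\xi\|_\infty \le 2\|d(\lambda)\|_{p/(p-2)}\|\xi\|_\infty$ together with Corollary~\ref{cor::conctrNorm} to get $\|r\|_p \lesssim \sqrt{p\log n}\,n^{1/p}$, producing $\lesssim c_0\|\xi\|_\infty$ via Assumption~\ref{assmp::lambda}. For $|\sum_{k\ne m}\overline{r_k}\eta_k|$, conditioning on $\eta$ (which is independent of $r$) makes this a centered sub-gaussian scalar of $\psi_2$ norm $\lesssim \|\eta\|_2$, so with probability $1-O(n^{-4})$ it is $\lesssim \sqrt{\log n}\,\|\eta\|_2$; and $\|\eta\|_2 \le 2\|d(\lambda)\|_2\|\xi\|_\infty$. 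A short H\"older interpolation ($\|d\|_2 \le n^{(4-p)/(2p)}\|d\|_{p/(p-2)}$ when $p\le 4$, and $\|d\|_2 \le \|d\|_{p/(p-2)}$ when $p\ge 4$) translates the assumption into $\sqrt{\log n}\,\|d(\lambda)\|_2 \lesssim c_0$, so this term is also $\lesssim c_0\|\xi\|_\infty$. Taking $c_0$ small enough yields $\|\xi\|_\infty/12$.

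The main obstacle is~(iii): extracting the right order requires both the conditional argument (which is the very reason $E_{22}^{(m)}$ is introduced, so that $\eta$ remains independent of the $m$-th row/column randomness we use for concentration) and the slightly delicate interpolation that converts the $\ell^{p/(p-2)}$-norm assumption on $d(\lambda)$ into an $\ell^2$-norm bound absorbing the extra $\sqrt{\log n}$ factor.
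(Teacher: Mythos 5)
Your proposal is correct and follows essentially the same route as the paper: for (iii), your decomposition $\Delta E_{22}^{(m)}D^{-1}\xi = \eta_m r + \bigl(\sum_{k\ne m}\overline{r_k}\,\eta_k\bigr)e_m$ is algebraically equivalent to the paper's coordinate-wise split (the $m$'th coordinate handled by conditional sub-gaussian concentration giving a $\sqrt{\log n}\,\|\ddiag(D_\lambda^{-1})\|_2\|\xi\|_\infty$ term, the remaining coordinates handled via the $\ell^p$-norm of the $m$'th column of $E_{22}$ divided by $D_{mm}$), and both arguments close with the norm conversion $\|\ddiag(D_\lambda^{-1})\|_2 \le n^{1/p}\|\ddiag(D_\lambda^{-1})\|_{p/(p-2)}$ of Lemma~\ref{lem::simpleNormBound}. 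Parts (i) and (ii) are treated identically to the paper: an entrywise sub-gaussian union bound, and the submultiplicative chain $\|E_{22}D^{-1}\|_{p,p}\le\|E_{22}\|_{p',p}\|D^{-1}\|_{p,p'}$ combined with Theorem~\ref{thm::opNorm}, Corollary~\ref{cor::conctrNorm}, and the diagonal-norm identity of Lemma~\ref{lem::diagMatNorm}.
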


This first part (i) is rather evident from the subgaussian assumption. Part (ii) is crucial in order to argue $\|v^t - v^{t,m} \|_p$ does not accumulate. Part (iii) utilizes independence, and leads to good coordinate-wise control of $v^t$ and $v^{t,m}$. 

Now we can begin our analysis for the iterative sequences. Fix $T := 5 + n$. Let $\cal{A}:= \cal{A}_1 \cap \cal{A}_2 \cap \cal{A}_3$, where
\begin{align}
\cal{A}_1 &:= \{ \| \tilde{E} \|_{\infty} \le C_0 \sqrt{\log n} \}, \label{eqn::A1}\\
\cal{A}_2 &:= \{ \| E_{22} D^{-1} \|_{p,p} \le 1/2 \} \cap \{ \| E_{22}^{(m)} D^{-1} \|_{p,p} \le 1/2, ~ \forall m \}, \label{eqn::A2}\\
\cal{A}_3 &:=\{ \| \Delta E_{22}^{(m)} D^{-1} v^{t,m} \|_p \le \| v^{t,m} \|_\infty / 12, ~ \forall 0 \le t \le T-1, ~ \forall m \} \label{eqn::A3}.
\end{align}
We will choose $v^{0,m}$, for each $m$, in a way such that it is independent of $m$'th row and $m$'th column of $E_{22}$ (see Section \ref{sec::quad}). According to the definitions, so are $v^{t,m}$. By Lemma \ref{lem::indct} and the union bound, the event $\cal{A}$
has probability $1 - O(n^{-2})$. Under the event $\cal{A}$, which holds high probability, bad examples of $E$ are excluded. The next theorem is deterministic in nature, because results in (i)-(iii) hold pointwise in $\cal{A} \cap \cal{B}$.

\begin{thm}\label{thm::indctL}
Suppose $n \ge 2$ and Assumption \ref{assmp::lambda}. Let $\omega > 0$ be a fixed positive number, and $\cal{B}$ be an event determined by
\begin{equation*}
\| v^0 - v^{0,m} \|_{p} \le \omega \sqrt{\log n}, \quad \| v^0 \|_{\infty} \le 3\omega \sqrt{\log n}, \quad \| v^{0,m} \|_{\infty} \le 3\omega \sqrt{\log n}.
\end{equation*}
Under $\cal{A} \cap \cal{B}$, the following inequalities hold: \\
(i) for $t = 0,\ldots T$,
\begin{align}
&\| v^t - v^{t,m} \|_{p} \le \omega' \sqrt{\log n}, \label{ineqn::vtDiffpNorm}\\
& \| v^t \|_{\infty} \le 3\omega' \sqrt{\log n}, \label{ineqn::vtInf}\\
 &\| v^{t,m} \|_{\infty} \le 3\omega' \sqrt{\log n}, \label{ineqn::vtmInf}
\end{align}
where $\omega' = 4\omega$. \\
(ii) The sequences $\{ v^t \}$ and $\{ v^{t,m} \}$ have limits in $\bb{C}^{n-1}$. Let $v^\infty = \lim_{t \to \infty} v^t$ and $v^{\infty,m} = \lim_{t \to \infty} v^{t,m}$. For $t=1,2,\ldots$ and $t = \infty$, inequalities (\ref{ineqn::vtDiffpNorm}) - (\ref{ineqn::vtmInf}) hold with $\omega'$ being replaced by $\omega'' := 8\omega$.\\
(iii) As a consequence, $D^{-1}v^{\infty}$ and $D^{-1}v^{\infty,m}$ are, respectively, a solution to $\cal{L}x = v^0$ and $\cal{L}^{(m)}x = v^{0,m}$.
\end{thm}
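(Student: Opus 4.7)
The plan is to prove (i) by induction on $t$, to deduce (ii) from the geometric contraction implied by $\cal{A}_2$, and to obtain (iii) by passing to the limit in the defining recursions.

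For (i), the base case $t=0$ follows from $\cal{B}$ and $\omega'>\omega$. For the inductive step, two computations are needed. The $\ell^p$ difference bound rests on the algebraic identity
\begin{equation*}
v^{t+1} - v^{t+1,m} = E_{22}D^{-1}(v^t - v^{t,m}) + \Delta E_{22}^{(m)} D^{-1} v^{t,m} + (v^0 - v^{0,m}),
\end{equation*}
obtained by writing $E_{22} = E_{22}^{(m)} + \Delta E_{22}^{(m)}$ and then adding and subtracting $E_{22}D^{-1}v^{t,m}$. Taking $\|\cdot\|_p$ and invoking $\cal{A}_2$ (so $\|E_{22}D^{-1}\|_{p,p}\le \tfrac12$), $\cal{A}_3$ (so $\|\Delta E_{22}^{(m)} D^{-1} v^{t,m}\|_p\le \|v^{t,m}\|_\infty/12$), together with $\cal{B}$ and the inductive hypothesis, yields the bound $(\tfrac{3}{4}\omega' + \omega)\sqrt{\log n}$, which is $\le \omega'\sqrt{\log n}$ precisely when $\omega' \ge 4\omega$; this is the motivation for $\omega'=4\omega$.

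For the coordinatewise bound $\|v^{t+1}\|_\infty$, I evaluate $v^{t+1}_m$ after splitting $v^t = (v^t - v^{t,m}) + v^{t,m}$. Because the $m$-th row of $E_{22}^{(m)}$ vanishes, $[E_{22}^{(m)}D^{-1}v^{t,m}]_m = 0$, so
\begin{equation*}
v^{t+1}_m = [E_{22}D^{-1}(v^t-v^{t,m})]_m + [\Delta E_{22}^{(m)} D^{-1} v^{t,m}]_m + v^0_m.
\end{equation*}
Using $|x_m|\le\|x\|_\infty\le\|x\|_p$, the events $\cal{A}_2$ and $\cal{A}_3$ bound the first two terms by $\tfrac{1}{2}\omega'\sqrt{\log n}$ and $\tfrac{1}{4}\omega'\sqrt{\log n}$ respectively, while $|v^0_m| \le 3\omega\sqrt{\log n}$ by $\cal{B}$. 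Summing gives $|v^{t+1}_m| \le \tfrac{3}{2}\omega'\sqrt{\log n} \le 3\omega'\sqrt{\log n}$, and the bound on $\|v^{t+1,m}\|_\infty$ follows from the triangle inequality $\|v^{t+1,m}\|_\infty \le \|v^{t+1}\|_\infty + \|v^{t+1}-v^{t+1,m}\|_p \le \tfrac{5}{2}\omega'\sqrt{\log n}$, closing the induction.

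For (ii), subtracting consecutive iterates gives $v^{t+1}-v^t = E_{22}D^{-1}(v^t - v^{t-1})$, so $\cal{A}_2$ yields the geometric contraction $\|v^{t+1}-v^t\|_p \le \tfrac{1}{2}\|v^t - v^{t-1}\|_p$, whence $\{v^t\}$ is Cauchy in $\ell^p$ and converges to some $v^\infty$. Telescoping gives $\|v^t - v^T\|_p \lesssim (1/2)^T \|v^0\|_p \lesssim (1/2)^T n^{1/p}\|v^0\|_\infty$ for any $t \ge T$, which is exponentially small in $n$ because $T=5+n$ and $p \ge 2$; hence the bounds from (i) carry over to all $t$ and to $t=\infty$ at the cost of doubling the constant, giving $\omega''=2\omega'=8\omega$. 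The same argument applies to $\{v^{t,m}\}$, and part (iii) is immediate by passing to the limit in $v^{t+1} = E_{22}D^{-1}v^t + v^0$, which rearranges to $\cal{L}(D^{-1}v^\infty)=v^0$, and analogously $\cal{L}^{(m)}(D^{-1}v^{\infty,m})=v^{0,m}$. The main obstacle is identifying the decompositions that make the independence encoded in $\cal{A}_3$ usable: a naive $\ell^p$-to-$\ell^\infty$ passage would cost a prohibitive $n^{1/p}$ factor, which is why introducing the surrogate $v^{t,m}$ and splitting $E_{22} = E_{22}^{(m)} + \Delta E_{22}^{(m)}$ is essential.
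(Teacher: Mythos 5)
Your proposal is correct and follows essentially the same route as the paper's proof: the identical decomposition $E_{22} = E_{22}^{(m)} + \Delta E_{22}^{(m)}$ to expose the independent piece, the same induction with the $3/4$-plus-$\omega$ bookkeeping motivating $\omega'=4\omega$, the same use of $[E_{22}^{(m)}D^{-1}v^{t,m}]_m=0$ for the coordinatewise bound, and the same geometric contraction from $\mathcal{A}_2$ to push the estimates past $t=T$ at the cost of a factor of $2$. The only difference is cosmetic: your triangle-inequality step for $\|v^{t+1,m}\|_\infty$ is written via $\|v^{t+1}\|_\infty + \|v^{t+1}-v^{t+1,m}\|_p$, which is a slightly cleaner rendering than the paper's wording of that same step.
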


This theorem establishes sharp $\ell^\infty$ bounds on rescaled $\cal{L}^{-1}v^0$ and $\cal{L}^{-1}v^{0,m}$. The following easy corollary is obtained by setting $v^0 = v^{0,m} = E_{21}$, and it is proved in the proof of Theorem \ref{thm::indctQs}.

\begin{cor}\label{cor::L-1E21}
With probability $1 - O(n^{-2})$, $\| D \cal{L}^{-1} E_{21} \|_{\infty} = O( \sqrt{\log n} )$.
\end{cor}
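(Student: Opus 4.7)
The plan is to specialize Theorem \ref{thm::indctL} to the choice $v^0 = v^{0,m} = E_{21}$ for every $m$. First I would check that this choice is admissible for the theorem's hypotheses. Recall $E_{21} = U_\bot^* E u$ collects the off-diagonal entries of $\tilde E$ in its first column below the $(1,1)$ entry, while $E_{22}^{(m)}$ and the $m$-th row/column of $E_{22}$ are built from entries of $\tilde E$ in its lower-right $(n-1)\times(n-1)$ block. By the joint independence of the entries of $\tilde E$ above the diagonal, $E_{21}$ is independent of the entire matrix $E_{22}$, hence in particular of its $m$-th row and column, which is exactly the independence requirement needed to make $v^{t,m}$ (and the event $\cal{A}_3$) well-defined.

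Next I would verify the event $\cal{B}$ with an appropriate absolute constant $\omega$. Since $v^0 = v^{0,m}$, the difference $\|v^0 - v^{0,m}\|_p = 0$ trivially satisfies the first bound for every $\omega > 0$. For the remaining two bounds, observe that on the event $\cal{A}_1$ of Lemma \ref{lem::indct}(i),
\begin{equation*}
\|v^0\|_\infty = \|v^{0,m}\|_\infty = \|E_{21}\|_\infty \le \|\tilde E\|_\infty \le C_0 \sqrt{\log n}.
\end{equation*}
Choosing $\omega := C_0/3$, both $\|v^0\|_\infty \le 3\omega\sqrt{\log n}$ and $\|v^{0,m}\|_\infty \le 3\omega \sqrt{\log n}$ are satisfied, so $\cal{B}$ holds on $\cal{A}_1$.

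Now I would invoke Theorem \ref{thm::indctL}. Part (iii) identifies $v^\infty = D\cal{L}^{-1} v^0 = D\cal{L}^{-1} E_{21}$, and part (ii) gives
\begin{equation*}
\|v^\infty\|_\infty \le 3\omega'' \sqrt{\log n} = 24\omega \sqrt{\log n} = 8 C_0 \sqrt{\log n}.
\end{equation*}
Combining this with the probability estimate $\mathbb{P}(\cal{A}) = 1 - O(n^{-2})$ from Lemma \ref{lem::indct} plus a union bound over the $m$-dependent pieces of $\cal{A}_2$ and $\cal{A}_3$, we conclude $\|D\cal{L}^{-1} E_{21}\|_\infty = O(\sqrt{\log n})$ with probability at least $1 - O(n^{-2})$.

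There is no real obstacle here since all the machinery is in place; the only point that deserves care is the independence verification for $v^{0,m}$, which is what allows us to use a single vector (namely $E_{21}$) as $v^{0,m}$ for every $m$ simultaneously. This collapses the diameter term $\|v^0 - v^{0,m}\|_p$ to zero, which is precisely what makes the constant $\omega$ depend only on the subgaussian concentration of $\tilde E$ rather than on any finer structural quantity.
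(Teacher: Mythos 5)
Your proof is correct and is essentially the same argument that the paper uses: the paper establishes this corollary as the base case $s=1$ in the proof of Theorem \ref{thm::indctQs}, by setting $v^0=v^{0,m}=E_{21}$, $\omega = C_0/3$, and invoking Theorem \ref{thm::indctL} under $\cal{A}^0 \cap \cal{B}^0$, which yields $\|Dq^1\|_\infty \le \kappa_2\sqrt{\log n}$ with $\kappa_2 = 8C_0$, exactly your bound $3\omega'' = 8C_0$. The independence check (that $E_{21}$ is independent of $E_{22}$, hence of its $m$-th row and column for each $m$) and the verification of $\cal{B}$ via $\|E_{21}\|_\infty \le C_0\sqrt{\log n}$ on $\cal{A}_1$ are both exactly what the paper relies on.
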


This corollary says if we could ignore the quadratic term in (\ref{eqn::quad}) and solve an approximate linear equation $\cal{L} q \approx E_{21}$, the solution would already have the bound $O(\sqrt{\log n}/(\lambda_1 - \lambda_j))$ for each coordinate. The next section justifies this approximation.

\begin{proof}[Proof of Theorem \ref{thm::indctL}]
(i) From the iterative definitions, we have
\begin{align}
v^{t+1} - v^{t+1,m} &= \Delta E_{22}^{(m)} D^{-1} v^{t,m} + E_{22} D^{-1} (v^t - v^{t,m}) + (v^0 - v^{0,m}), \\
v^{t+1} &= E_{22}D^{-1}v^{t,m} + E_{22}D^{-1}(v^t - v^{t,m}) + v^0,
\end{align}
for any $m=1,\ldots n-1$. We now begin our induction on $t$. For $t=0$, the bounds (\ref{ineqn::vtDiffpNorm}) -- (\ref{ineqn::vtmInf}) are trivially true under $\cal{B}$. Suppose that we have proved (\ref{ineqn::vtDiffpNorm}) -- (\ref{ineqn::vtmInf}) for $t$. To deal with $t+1$, we notice that under $\cal{A}$, 
\begin{align*}
\| v^{t+1} - v^{t+1,m} \|_p & \le \| v^{t,m} \|_{\infty} / 12 + \| v^t - v^{t, m} \|_p / 2 + \| v^0 - v^{0,m} \|_p \\
&\le 3\omega' \sqrt{\log n}  / 12+ \omega' \sqrt{\log n}/2 + \omega \sqrt{\log n} \\
&= \omega' \sqrt{\log n}.
\end{align*}
For any $m=1,\ldots,n-1$, we can bound the $m$'th coordinate of $v^{t+1}$ as follows:
\begin{align*}
| v^{t+1}_m | &\le | [ \Delta E_{22}^{(m)} D^{-1} v^{t,m}]_m | + \| E_{22} D^{-1} ( v^t - v^{t,m} ) \|_{\infty} + \| v^0 \|_{\infty} \\
&\le \| \Delta E_{22}^{(m)} D^{-1} v^{t,m} \|_p + \| E_{22} D^{-1} ( v^t - v^{t,m} ) \|_p + \| v^0 \|_{\infty} \\
&\le \| v^{t,m} \|_{\infty}/12 + \| v^t - v^{t,m} \|_p /2 + \| v^0 \|_{\infty} \\
&< 2\omega' \sqrt{\log n},
\end{align*}
where, in the first inequality, we used the identity
\begin{equation*}
 [E_{22} D^{-1} v^{t,m}]_m  =   ((E_{22})_{m \cdot})D^{-1} v^{t,m} = [\Delta E_{22}^{(m)} D^{-1} v^{t,m}]_m 
\end{equation*} 
where $(E_{22})_{m \cdot}$ denotes the $m$'th row of $E_{22}$. 
% is due to the fact that $m$'th coordinate of $v^{t+1}$ only depends on $m$'th row of $E_{22}$. 
This proves (\ref{ineqn::vtInf}) for the case $t+1$. The bound on $v^{t+1,m}$ follows easily, since 
\begin{equation*}
\| v^{t+1,m} \|_\infty \le \|v^t \|_\infty + \|v^t - v^{t,m} \|_\infty \le 3w' \sqrt{\log n}.
\end{equation*}
By induction, (\ref{ineqn::vtDiffpNorm}) -- (\ref{ineqn::vtmInf}) are true for $t \le T$ (we cannot prove the bounds for $t>T$ due to the definition of $\cal{A}_3$).

(ii) For any $t \ge 1$, 
\begin{equation*}
v^{t+1} - v^t = E_{22} D^{-1} (v^t - v^{t-1}).
\end{equation*}
Under $\cal{A}$, we have
\begin{equation*}
\| v^{t+1} - v^t \|_p \le \| E_{22} D^{-1} \|_{p,p} \| v^t - v^{t-1} \|_p \le  \| v^t - v^{t-1} \|_p /2.
\end{equation*}
This implies that $\{v^t\}$ is a Cauchy sequence, and therefore has a limit $v^{\infty} = \lim_{t \to \infty} v^t$ in $\bb{C}^{n-1}$. The same argument applies to $v^{t,m}$, and we let $v^{\infty, m}$ denote their respective limits. Moreover, for any integer $t > T$, $\| v^{t} - v^T \|_p \le \sum_{k=T}^{t-1} \| v^{k+1} - v^k \|_p \le \sum_{k=T}^{t-1} 2^{k-T} \| v^{T+1} - v^T \|_p$, and similar bounds hold for $ \| v^{t,m} - v^{T,m} \|_p$. This leads to
\begin{align*}
\| v^t - v^{t,m} \|_{p} &\le \| v^T - v^{T,m} \|_{p} + \| v^{t} - v^T \|_p + \| v^{t,m} - v^{T,m} \|_p \\
&\le \omega' \sqrt{\log n} + 2 ( \| v^{T+1} - v^T \|_p + \| v^{T+1,m} - v^{T,m} \|_p) \\
&\le \omega' \sqrt{\log n} + 2^{1-T}n^{1/p} \cdot ( \| v^{1} - v^0 \|_\infty + \| v^{1,m} - v^{0,m} \|_\infty ) \\
&\le \omega' \sqrt{\log n} + 2^{1-T}n^{1/p} \cdot 12\omega'  \sqrt{\log n}.
\end{align*}
where we used the trivial bound $\| x \|_p \le n^{1/p} \|x \|_{\infty}$ to convert the norms (note $n^{1/p}=1$ when $p=\infty$). Since $12 \cdot 2^{1-T} n^{1/p} \le 24 \cdot 2^{-T} n < 1$ (recall $T = n+5$), it follows that $\| v^t - v^{t,m} \|_{p} \le \omega'' \sqrt{\log n}$. To bound $\| v^t \|_{\infty}$, for any integer $t > T$, notice
\begin{align*}
\| v^t \|_\infty &\le \| v^T \|_\infty + \| v^t - v^T \|_p \le 3\omega'\sqrt{\log n} + 2^{1-T} \|v^1 - v^0\|_p \\
&\le 3\omega'\sqrt{\log n} + 2^{1-T} n^{1/p} \cdot 6\omega' \sqrt{\log n} \\
&\le 3\omega'' \sqrt{\log n},
\end{align*}
where we used $2^{2-T}n^{1/p} \le 1$. A similar bound holds for $\| v^{t,m} \|_\infty$. By taking limits, the same three bounds hold for $t = \infty$. This finishes the proof of part (ii).

(iii) We let $t \to \infty$ in (\ref{eqn::vtIter}) and (\ref{eqn::vtmIter}), which leads to $v^{\infty} = E_{22} D^{-1} v^{\infty} + v^0$ and a similar identity for $v^{\infty, m}$. So $D^{-1}v^\infty$ is a solution to the equation $Dx = E_{22}x + v^0$, or equivalently $\cal{L}x = v^0$. Similarly, $D^{-1}v^{\infty,m}$ is a solution to $\cal{L}^{(m)}x = v^{0,m}$. This concludes the proof.
\end{proof}

\section{Analysis of Quadratic Equations}\label{sec::quad}
Recall that our goal is to study $q$, the solution to the system of quadratic equations:
\begin{equation}\tag{\ref{eqn::quad}}
\cal{L} q = E_{21} - qE_{12}q,
\end{equation}
If we could discard the quadratic term, then $q$ would be determined by a linear system and Corollary \ref{cor::L-1E21} would have achieved our goal. However, this is not the case, but the good news is that the quadratic equations are approximately linear.
%The way we justify this approximation is similar to what we have done in Section \ref{sec::L-1}. 
The analysis is similar to what we have done in Section \ref{sec::L-1}. Let $q^0 = q^{0,m} = 0$, and define recursively
\begin{align*}
q^{s+1} &= \cal{L}^{-1} E_{21} + \cal{L}^{-1} q^s E_{12} q^s, \\
q^{s+1, m} &= (\cal{L}^{(m)})^{-1} E_{21} +  (\cal{L}^{(m)})^{-1} q^{s,m} E_{12} q^{s,m}.
\end{align*}
for $s = 0,1,\ldots$ For each $s$, we analyze $\cal{L}^{-1} q^s$ and $(\cal{L}^{(m)})^{-1} q^{s,m}$ via the sequences we produced in Section \ref{sec::L-1}. So there are two loops of iterations: $\{q^s\}, \{q^{s,m}\}$ are sequences produced by the outer iteration loop; and for any fixed $s \ge 1$, we begin an inner iteration loop by setting $v^0$ to $q^s$ and $v^{0,m}$ to $q^{s,m}$, and iterates according to (\ref{eqn::vtIter}) and (\ref{eqn::vtmIter}). Let $\cal{A}^s$ $(s \ge 1)$ be the intersection of $\cal{A}_1, \cal{A}_2$ and $\cal{A}_3$ as defined in (\ref{eqn::A1}) and (\ref{eqn::A2}), with initializers being $v^0 = q^s$ and $v^{0,m} = q^{s,m}$. For $s=0$, we set $v^0 = v^{0,m} = E_{21}$, and define $\cal{A}^0 = \cal{A}_1 \cap \cal{A}_2 \cap \cal{A}_3$ with initializer $E_{21}$. Note that the way $q^{s, m}$ is defined assures independence of $q^{s,m}$ and $m$'th row and $m$'th column of $E_{22}$. We also define events
\begin{align*}
\cal{B}^0 &:= \{  \| E_{21} \|_{\infty} \vee |E_{11}| \le C_0 \sqrt{\log n} \} \cap \{  \| D^{-1} E_{21} \|_{p'} \le 1/2 \},
 \\
\cal{B}^s &:= \{ \| q^s - q^{s,m} \|_{p} \le 4\kappa_1\delta^{-1} \sqrt{\log n}, ~ \| q^s \|_{\infty} \le 4\kappa_2\delta^{-1} \sqrt{\log n}, \\
&~~~~~~ \| q^{s,m} \|_{\infty} \le 4\kappa_2\delta^{-1} \sqrt{\log n} \}, \qquad \forall s \ge 1.
\end{align*}
Here $C_0>0$ is the same constant as in Lemma \ref{lem::indct} part (i), and $\kappa_1 = 8C_0/3, \kappa_2 = 3\kappa_1$. \\

%The analysis about $\cal{L}^{-1}$, as needed for the inner loop, is encapsulated in the results in Section \ref{sec::L-1}, so it is less burdensome to obtain bounds for $q^s$ and $q^{s,m}$ in the outer loop.

The results in Section \ref{sec::L-1} serve as building blocks for the analysis here, as we will need bounds for $\cal{L}^{-1} q^s$ and $(\cal{L}^{(m)})^{-1} q^{s,m}$.
\begin{thm}\label{thm::indctQs}
Suppose $n\ge 2$ and Assumption \ref{assmp::lambda}. Under $\bigcap_{s=0}^{T-1}\cal{A}^s \cap \cal{B}^0$, we have \\
(i) for $s=1,\ldots,T$ and $m = 1,\ldots,n-1$, 
\begin{align}
&\| D (q^{s,m} - q^s) \|_p \le 2\kappa_1 \sqrt{ \log n}, \label{ineqn::qDiff}\\
&\| D q^s \|_\infty \le 2\kappa_2\sqrt{ \log n}, \label{ineqn::qt}\\
&\| D q^{s,m} \|_\infty \le 2\kappa_2\sqrt{ \log n}, \label{ineqn::qtm}
\end{align}
where $\kappa_1 = 8C_0/3, \kappa_2 = 3\kappa_1$ are absolute constants.\\
(ii) for all $s \ge 1$,
\begin{align}
\begin{cases}
\| D q^s \|_\infty \le 3 \kappa_2 \sqrt{ \log n}, \\
\| D (q^{s+1} - q^s) \|_p \le \| D (q^s - q^{s-1}) \|_p / 2.
\end{cases}
\end{align}
As a consequence, $\{ q^s \}_{s=1}^\infty$ converges to a limit $q^\infty$, which is a solution to
\begin{equation*}
\cal{L} q = E_{21} + q E_{12} q.
\end{equation*}
And $q^\infty$ satisfies $\| D_\lambda q^\infty \|_\infty \le 6\kappa_2 \sqrt{\log n}$. 
\end{thm}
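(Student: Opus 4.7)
The plan is to prove part (i) by induction on $s$ from $1$ to $T$, where each inductive step reduces to one application of Theorem \ref{thm::indctL} on an inner iteration loop, and then to derive part (ii) from a contraction estimate on the outer iteration. The enabling algebraic observation is that $E_{12}q^s$ is a \emph{scalar} (row times column), so the outer iteration factorizes as
\begin{equation*}
q^{s+1} \;=\; \cal{L}^{-1} E_{21} \;+\; (E_{12} q^s)\,\cal{L}^{-1} q^s,
\end{equation*}
and analogously for $q^{s+1,m}$. Thus $Dq^{s+1}$ is a linear combination of two inner-loop limits $D\cal{L}^{-1}E_{21}$ and $D\cal{L}^{-1}q^s$, each governed by one invocation of Theorem \ref{thm::indctL}: the first with initializer $v^0 = v^{0,m} = E_{21}$ at scale $\omega \asymp C_0$ (event $\cal{A}^0 \cap \cal{B}^0$), yielding $\|D\cal{L}^{-1}E_{21}\|_\infty \le \kappa_2\sqrt{\log n}$ and $\|D\cal{L}^{-1}E_{21} - D(\cal{L}^{(m)})^{-1}E_{21}\|_p \le \kappa_1\sqrt{\log n}$; the second with initializer $v^0 = q^s, v^{0,m} = q^{s,m}$ (event $\cal{A}^s$).

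For the inductive step, the hypotheses of Theorem \ref{thm::indctL} at scale $\omega = 4\kappa_1/\delta$ follow from the clauses of $\cal{B}^s$ together with $\|D^{-1}\|_{\infty,\infty} \le 2/\delta$ and the identity $\kappa_2 = 3\kappa_1$; the theorem then outputs $\|D\cal{L}^{-1}q^s\|_\infty, \|D(\cal{L}^{(m)})^{-1}q^{s,m}\|_\infty \lesssim \kappa_1\sqrt{\log n}/\delta$ and the analogous $\ell^p$ bound for their difference. The scalar coefficient is controlled by
\begin{equation*}
|E_{12}q^s| \;=\; \bigl|(D^{-1}E_{21})^{*}(Dq^s)\bigr| \;\le\; \|D^{-1}E_{21}\|_{1}\,\|Dq^s\|_\infty \;\le\; \kappa_2\sqrt{\log n},
\end{equation*}
where $\|D^{-1}E_{21}\|_1 \le 1/2$ is the $p=\infty$ instance of Lemma \ref{lem::indct}(ii). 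Assembling,
\begin{equation*}
\|Dq^{s+1}\|_\infty \;\le\; \kappa_2\sqrt{\log n} \;+\; \kappa_2\sqrt{\log n}\cdot O\!\bigl(\kappa_1\sqrt{\log n}/\delta\bigr),
\end{equation*}
and the smallness $\sqrt{\log n}/\delta \lesssim c_0$ forced by Assumption \ref{assmp::lambda} (since $\|d(\lambda)\|_{p/(p-2)} \ge 1/\delta$) absorbs the second summand to close $\|Dq^{s+1}\|_\infty \le 2\kappa_2\sqrt{\log n}$. The companion estimate (\ref{ineqn::qDiff}) is obtained from the three-way split $q^{s+1}-q^{s+1,m} = [\cal{L}^{-1}-(\cal{L}^{(m)})^{-1}]E_{21} + (E_{12}q^s)[\cal{L}^{-1}q^s - (\cal{L}^{(m)})^{-1}q^{s,m}] + [E_{12}(q^s-q^{s,m})](\cal{L}^{(m)})^{-1}q^{s,m}$, where the last cross term uses the refinement $|E_{12}(q^s-q^{s,m})| \le \|D^{-1}E_{21}\|_{p'}\|D(q^s-q^{s,m})\|_p \lesssim \kappa_1\sqrt{\log n}$ from the $\cal{B}^0$-clause.

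For part (ii), the contraction is extracted from
\begin{equation*}
q^{s+1}-q^s \;=\; (E_{12}q^s)\,\cal{L}^{-1}(q^s - q^{s-1}) \;+\; \bigl(E_{12}(q^s - q^{s-1})\bigr)\,\cal{L}^{-1}q^{s-1},
\end{equation*}
combined with Lemma \ref{lem::L-1norm} in the form $\|D\cal{L}^{-1}y\|_p \le 2\|y\|_p$ and $|E_{12}y| \le \|D^{-1}E_{21}\|_{p'}\|Dy\|_p$. Each scalar factor contributes $O(\sqrt{\log n})$ and each $\cal{L}^{-1}$ contributes $O(1/\delta)$, so the net ratio $O(\sqrt{\log n}/\delta)$ is $\le 1/2$ under Assumption \ref{assmp::lambda}. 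Cauchy completeness yields $q^s \to q^\infty$, passage to the limit identifies $\cal{L}q^\infty = E_{21} + q^\infty E_{12}q^\infty$, and the final bound $\|D_\lambda q^\infty\|_\infty \le 6\kappa_2\sqrt{\log n}$ follows from $D \succeq D_\lambda/2$. The main obstacle is the constant bookkeeping: Theorem \ref{thm::indctL} inflates the scale by the factor $\omega \mapsto 8\omega$ per call, and every pass through the quadratic nonlinearity injects a multiplicative $\sqrt{\log n}/\delta$; closing the induction with the specific constants $\kappa_1 = 8C_0/3$ and $\kappa_2 = 3\kappa_1$ built into $\cal{B}^s$ requires choosing $c_0$ in Assumption \ref{assmp::lambda} small enough to absorb both of these inflations simultaneously.
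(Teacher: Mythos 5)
Your overall strategy is the same as the paper's: the base case $s=1$ via Theorem~\ref{thm::indctL} with $\omega \asymp C_0$, the same three-term decomposition of $q^{s+1}-q^{s+1,m}$ in the inductive step, another call to Theorem~\ref{thm::indctL} at scale $\omega = 4\kappa_1/\delta$, H\"older to control the scalar $E_{12}q^s$, and for part~(ii) the factorization $q^{s+1}-q^s = (E_{12}q^s)\,\cal{L}^{-1}(q^s-q^{s-1}) + (E_{12}(q^s-q^{s-1}))\,\cal{L}^{-1}q^{s-1}$ together with Lemma~\ref{lem::L-1norm}. So the architecture is right.

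However, there is a recurring error in how you bound the scalar $|E_{12}q^s|$. You invoke $\| D^{-1}E_{21} \|_1 \le 1/2$, calling it ``the $p=\infty$ instance of Lemma~\ref{lem::indct}(ii),'' and conclude $|E_{12}q^s| \le \kappa_2\sqrt{\log n}$. But Assumption~\ref{assmp::lambda} fixes a single exponent $p$, and Lemma~\ref{lem::indct}(ii) only delivers $\| D^{-1}E_{21} \|_{p'} \le 1/2$ for that $p'$ (the bound at a different exponent would require Assumption~\ref{assmp::lambda} at the corresponding $p$, which is not guaranteed). Since $\|\cdot\|_1 \ge \|\cdot\|_{p'}$, the $\ell^1$ bound does not follow from the $\ell^{p'}$ one. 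The correct estimate is $|E_{12}q^s| \le \|D^{-1}E_{21}\|_{p'}\|Dq^s\|_p \le n^{1/p}\|Dq^s\|_\infty/2$, which carries an extra $n^{1/p}$; this is still absorbed because Assumption~\ref{assmp::lambda} forces $\delta \gtrsim n^{1/p}\sqrt{\log n}$ (not merely $\delta \gtrsim \sqrt{\log n}$ as you state), but as written your bound $|E_{12}q^s| \le \kappa_2\sqrt{\log n}$ and the subsequent ``smallness $\sqrt{\log n}/\delta \lesssim c_0$'' are both off by the factor $n^{1/p}$. Curiously, you apply the correct dual-norm bound to $|E_{12}(q^s - q^{s,m})|$ in the same paragraph, so the inconsistency seems inadvertent. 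The same omission of $n^{1/p}$ recurs in the contraction ratio in part~(ii).

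A secondary gap: part~(ii) asserts $\|Dq^s\|_\infty \le 3\kappa_2\sqrt{\log n}$ and the halving for \emph{all} $s \ge 1$, but the events $\cal{A}^s$ (and hence Theorem~\ref{thm::indctL}) are only available for $s \le T-1$, so part~(i) only covers $s\le T$. The paper bridges $s>T$ via a separate induction that bootstraps the geometric decay $\|D(q^{s+1}-q^s)\|_p \le 2^{T-s}\|D(q^{T+1}-q^T)\|_p$ to keep $\|Dq^s\|_\infty$ under $3\kappa_2\sqrt{\log n}$ and thus keep the halving alive. Your sketch jumps directly from the contraction identity to Cauchy completeness without explaining how the needed uniform-in-$s$ bound on $\|Dq^s\|_\infty$ survives past $s=T$, which is the only nontrivial point of part~(ii).
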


Together with Theorem \ref{thm::quadEqn}, this theorem implies $\tilde{u} = (u+U_\bot q^\infty)( 1 + \| q \|_2^2)^{-1/2}$ is an eigenvector of $\tilde{A}$ and that 
\begin{equation*}
| \langle \tilde{u}, u_j \rangle | = ( 1 + \| q \|_2^2)^{-1/2}|q_{j-1}|  \le 6\kappa_2\sqrt{\log n}/(\lambda_1 - \lambda_j), \qquad \, \forall2 \le  j \le n.
\end{equation*}
with probability $1 - O(n^{-1})$. The remaining task, therefore, is to show that $\tilde{u}$ is indeed the leading eigenvector, which is proved in Section \ref{sec::match}.

\begin{proof}[Proof of Theorem \ref{thm::indctQs}]
We assume that the constant $c_0$ in Assumption \ref{assmp::lambda} is small enough such that $\delta \ge \max\{2C_0\sqrt{\log n}, 96\kappa_2n^{1/p}\sqrt{\log n} \}$. Under $\cal{B}^0$ and this assumption, each element along the diagonal of $D$ satisfies $D_{jj} \ge \delta /2$.\\

(i) We will prove the inequalities through induction on $s$. For $s=1$, observe that $q^1 = \cal{L}^{-1} E_{21}, q^{1,m} = (\cal{L}^{(m)})^{-1} E_{21}$, and thus we can invoke Theorem \ref{thm::indctL}, in which we set initializers $v^0 = v^{0,m} = E_{21}$ and $\omega = C_0/3$. By Theorem \ref{thm::indctL}, under $\cal{A}^0 \cap \cal{B}^0$, the results (i)-(iii) apply, so
\begin{equation}\label{ineqn::indctQuadBase}
\| D (q^{1,m} - q^1) \|_p \le \kappa_1 \sqrt{ \log n}, ~ \| D q^1 \|_\infty \le \kappa_2\sqrt{ \log n}, ~ \| D q^{1,m} \|_\infty \le \kappa_2\sqrt{ \log n},
\end{equation}
This proves inequalities (\ref{ineqn::qDiff}) - (\ref{ineqn::qtm}) for the base case $s=1$, which also implies Corollary \ref{cor::L-1E21} since $\bb{P}(\cal{A}^0 \cap \cal{B}^0) = 1 - O(n^{-2})$. \\

To proceed with the inductive step, suppose (\ref{ineqn::qDiff}) -- (\ref{ineqn::qtm}) are true for $s$, and we will consider the case $s+1$. First we bound $\| D(q^{s+1,m} - q^{s+1})\|_p$. 
\begin{align}
& ~~~~ \| D(q^{s+1,m} - q^{s+1}) \|_p \notag \\
&\le \| D\cal{L}^{-1}E_{21} - D(\cal{L}^{(m)})^{-1} E_{21} \|_p + \| ( D\cal{L}^{-1} q^s - D(\cal{L}^{(m)})^{-1} q^{s,m} ) E_{12} q^s \|_p  \notag \\  
&  ~~~~ + \| D(\cal{L}^{(m)})^{-1} q^{s,m} ( E_{12} q^{s,m} - E_{12}q^s ) \|_p. \label{ineqn::qbound1}
\end{align}
The first term in the right-hand side is just $\| D(q^1 - q^{1,m}) \|_p$, which is bounded by $\kappa_1 \sqrt{\log n}$. To bound the second term and third term, note that $\| x \|_p \le 2\delta^{-1}\| D x \|_p, \| x \|_\infty \le 2\delta^{-1} \| Dx \|_\infty$, so (\ref{ineqn::qDiff}) -- (\ref{ineqn::qtm}) imply 
\begin{equation}\label{ineqn::q1}
\| q^s - q^{s,m} \|_{p} \le 4\kappa_1 \delta^{-1} \sqrt{\log n}, ~ \| q^s \|_{\infty} \le 4\kappa_2 \delta^{-1} \sqrt{\log n}, ~ \| q^{s,m} \|_{\infty} \le 4\kappa_2 \delta^{-1} \sqrt{\log n}.
\end{equation}
which is exactly the event $\cal{B}^s$. Now we resort to Theorem \ref{thm::indctL}, in which we set $v^0 = q^s$, $v^{0,m} = q^{s,m}$, and $\omega = 4\kappa_1\delta^{-1}$. According to Theorem \ref{thm::indctL}, under $\cal{A}^s\cap \cal{B}^s$,
\begin{align}
&\| D\cal{L}^{-1} q^s - D(\cal{L}^{(m)})^{-1} q^{s,m} \|_p \le 32\kappa_1\delta^{-1} \sqrt{\log n}, \label{ineqn::qbound2}\\
&\| D\cal{L}^{-1} q^{s} \|_{\infty} \le 32\kappa_2\delta^{-1} \sqrt{\log n}, \label{ineqn::qbound3}\\
&\| D(\cal{L}^{(m)})^{-1} q^{s,m} \|_{\infty} \le 32\kappa_2\delta^{-1} \sqrt{\log n}. \label{ineqn::qbound4}
\end{align}
Moreover, by H\"{o}lder's inequality, under $\cal{B}^0$,
\begin{align}
&| E_{12} q^s | \le \| D^{-1} E_{21} \|_{p'} \| D q^s \|_{p} \le n^{1/p} \| D q^s \|_\infty / 2, \label{ineqn::qbound5}\\
& | E_{12} (q^{s,m} - q^s) | \le \|  D^{-1}E_{21} \|_{p'} \| D (q^{s,m} - q^s) \|_{p} \le  \| D (q^{s,m} - q^s) \|_{p} /2, \notag
\end{align}
where we used the trivial bound $\| x \|_p \le n^{1/p} \| x \|_\infty$ (note $n^{1/p}=1$ when $p=\infty$). These inequalities, together with (\ref{ineqn::qDiff}), (\ref{ineqn::qt}), (\ref{ineqn::qbound2}), (\ref{ineqn::qbound4}), lead to a bound on (\ref{ineqn::qbound1}):
\begin{align*}
\| D(q^{s+1,m} - q^{s+1}) \|_p &\le \kappa_1 \sqrt{\log n} + 32\kappa_1\delta^{-1} \sqrt{\log n}\cdot n^{1/p}\kappa_2\sqrt{ \log n} \\
& ~~~~ +32\kappa_2\delta^{-1} n^{1/p}\sqrt{\log n} \cdot \kappa_1 \sqrt{ \log n} \\
& \le 2\kappa_1 \sqrt{\log n},
\end{align*}
since $64\kappa_2n^{1/p} \sqrt{\log n} \le \delta$. This proves (\ref{ineqn::qDiff}) for the case $s+1$. 

To prove (\ref{ineqn::qt}) for the case $s+1$, note
\begin{align*}
\| Dq^{s+1} \|_\infty &\le \| D \cal{L}^{-1} E_{21} \|_{\infty} + \| \cal{D} \cal{L}^{-1} q^s \|_{\infty} | E_{12} q^s |  \\
&\le  \| D q^1 \|_{\infty} + 32\kappa_2\delta^{-1} \sqrt{\log n} \cdot n^{1/p} \| Dq^s\|_{\infty}/2 \\
&\le \kappa_2  \sqrt{\log n} + 32\kappa_2\delta^{-1} \sqrt{\log n} \cdot n^{1/p} \kappa_2 \sqrt{\log n},
\end{align*}
where we used (\ref{ineqn::qbound3}) and (\ref{ineqn::qbound5}) in the second inequality, and (\ref{ineqn::indctQuadBase}) and (\ref{ineqn::qt}) in the third inequality. Since $32\kappa_2n^{1/p} \sqrt{\log n} \le \delta/3$, we derive $\| Dq^{s+1} \|_\infty \le 4\kappa_2\sqrt{\log n}/3$, which proves (\ref{ineqn::qt}) for the case $s+1$. Finally, 
\begin{align*}
\| Dq^{s+1,m} \|_\infty &\le \| Dq^{s+1} \|_\infty  + \| D(q^{s+1,m} - q^{s+1}) \|_p \le 2\kappa_1 \sqrt{\log n} + 4\kappa_2 \sqrt{\log n}/3 \\
&= 2\kappa_2 \sqrt{\log n}.
\end{align*}
This completes the inductive step. Therefore, under $\bigcap_{s=0}^{T-1}\cal{A}^s \cap \cal{B}^0$, inequalities (\ref{ineqn::qDiff}) - (\ref{ineqn::qtm}) are true for $s = 1,\ldots, T$ and any $m$.

(ii) Observe 
\begin{equation*}
\| D(q^{s+1} - q^s) \|_p \le \| D \cal{L}^{-1} (q^s - q^{s-1}) \|_p | E_{12}q^s| + \| D \cal{L}^{-1}q^{s-1}\|_p | E_{12} (q^s - q^{s-1}) |.
\end{equation*}
By Lemma \ref{lem::L-1norm}, we can bound $\| D \cal{L}^{-1} (q^s - q^{s-1}) \|_p$ by $2\|q^s - q^{s-1} \|_p$ and $\| D \cal{L}^{-1}q^{s-1}\|_p$ by $2 \|q^{s-1}\|_p$. Similar to the proof of part (i), we can use H\"{o}lder's inequality and $\| D^{-1} E_{21} \|_{p'} \le 1/2$ to bound $| E_{12}q^s|$ and $| E_{12} (q^s - q^{s-1}) |$. This results in
\begin{align}
\| D(q^{s+1} - q^s) \|_p &\le 2\| q^s - q^{s-1} \|_p  \cdot \| Dq^s \|_p/2 + 2\| q^{s-1} \|_p \cdot \|  D(q^s - q^{s-1}) \|_p/2 \notag\\
&\le 2\delta^{-1} n^{1/p} \|  D(q^s - q^{s-1}) \|_p (\| Dq^s \|_\infty + \| Dq^{s-1}\|_\infty), \label{ineqn::qbound5-2}
\end{align}
where, again, we used $\| x \|_p \le 2\delta^{-1} \|Dx\|_p$ and $\| x \|_p \le n^{1/p} \|x \|_\infty$. When $s\le T$, we can use what we proved in part (i) to derive $\| Dq^s \|_\infty + \| Dq^{s-1}\|_\infty \le 4\kappa_2\sqrt{\log n}$, so for $s=1,\ldots, T$, 
\begin{equation*}
\| D(q^{s+1} - q^s) \|_p \le 8\kappa_2\delta^{-1} n^{1/p}\sqrt{\log n}\,  \|  D(q^s - q^{s-1}) \|_p \le \|  D(q^s - q^{s-1}) \|_p/2.
\end{equation*}
This implies $\| D(q^{T+1} - q^T) \|_p \le 2^{-T} n^{1/p} \| Dq^1\|_\infty \le 2^{1-T} \kappa_2 n^{1/p} \sqrt{\log n}$. Since $2^{1-T} n^{1/p} \le 2^{1-T} n \le 1/12$, this yields $\| D(q^{T+1} - q^T) \|_p \le \kappa_2 \sqrt{\log n}/12$. Now we are going to show, via induction on $s$, that 
\begin{equation}
\begin{cases}
\| Dq^s \|_{\infty} \le 3\kappa_2 \sqrt{\log n}, \\
\| D(q^{s+1} - q^s) \|_p \le \|  D(q^s - q^{s-1}) \|_p/2,
\end{cases}
\forall s \ge 1. \label{ineqn::qbound6}
\end{equation}
From part (i) and the above argument, we know (\ref{ineqn::qbound6}) hold for $s \le T$. Suppose for $\ell \ge T$, the inequalities in (\ref{ineqn::qbound6}) are true for any $s \le \ell$; and let us consider case $\ell + 1$. Since $\| D(q^{s+1} - q^s) \|_p$ decays geometrically, we have $\| D(q^{s+1} - q^s) \|_p \le 2^{T-s} \| D(q^{T+1} - q^T) \|_p$ for $T \le s \le \ell$. Thus,
\begin{align*}
\| D q^{\ell+1} \|_\infty &\le \| D q^{T} \|_\infty + \sum_{s=T}^\ell \| D(q^{s+1} - q^s) \|_p \le  \| D q^{T} \|_\infty + \sum_{s=T}^\ell 2^{T-s} \| D(q^{T+1} - q^T) \|_p \\
&\le 2\kappa_2 \sqrt{\log n} + 2 \cdot \kappa_2 \sqrt{\log n}/12 < 3\kappa_2 \sqrt{\log n}.
\end{align*} 
Furthermore, in (\ref{ineqn::qbound5-2}) we set $s = \ell + 1$, and the desired inequality $\| D(q^{\ell+2} - q^{\ell+1}) \|_p \le \|  D(q^{\ell+1} - q^{\ell}) \|_p/2$ follows from $\| D q^{\ell+1} \|_\infty + \| D q^{\ell} \|_\infty \le 6\kappa_2\sqrt{\log n}$ and $24\kappa_2n^{1/p}\sqrt{\log n} \le \delta$. This completes the induction process and establishes (\ref{ineqn::qbound6}) for any $s \ge 1$. Apparently, (\ref{ineqn::qbound6}) implies that $q^s$ converges to a limit $q^\infty := \lim_{s \to \infty} q^s$ and $\| D q^\infty \|_\infty \le 3\kappa_2 \sqrt{\log n}$. Letting $s \to \infty$ in the recursive formula of $q^s$, we derive
\begin{equation*}
q^\infty = \cal{L}^{-1}E_{21} + \cal{L}^{-1} q^\infty E_{12} q^\infty.
\end{equation*}
It follows that
\begin{equation*}
\cal{L} q^\infty = E_{21} +  q^\infty E_{12} q^\infty.
\end{equation*}
Finally, we use $|E_{11}| \le \delta / 2$ to obtain $D_{jj} \ge (\lambda_1 - \lambda_j)/2 = [D_{\lambda}]_{jj}/2$, and therefore $\| D_\lambda q^\infty \|_\infty \le  2\| D q^\infty \|_\infty \le 6\kappa_2 \sqrt{\log n}$.
%By Theorem \ref{thm::quadEqn}, this $q^\infty$ gives the eigenvector $\tilde{u}$ according to (\ref{eqn::tildeu}).\\
%
%(iii) In part (ii), we have proved $\| D(q^{T+1} - q^T) \|_p \le \kappa_2 \sqrt{\log n} / 12$, and for $s \ge T, \| D(q^{s+1} - q^s) \|_p \le 2^{T-s} \| D (q^{T+1} - q^T) \|_p$. So $\| D(q^{\infty} - q^T) \|_p  \le 2\| D(q^{T+1} - q^T) \|_p \le \kappa_2 \sqrt{\log n} / 6$. We can use this bound and (\ref{ineqn::qbound3}) to derive
%\begin{align*}
%\| D \cal{L}^{-1} q^\infty \|_\infty & \le \| D \cal{L}^{-1} q^T \|_\infty + \| D \cal{L}^{-1} (q^T - q^\infty) \|_p \\
%&\le \| D \cal{L}^{-1} q^T \|_\infty + 2 \| q^T - q^\infty \|_p \\
%&\le \| D \cal{L}^{-1} q^T \|_\infty + 2\delta^{-1} \| D(q^T - q^\infty) \|_p \\
%&\le 16 \kappa_2 \delta^{-1} \sqrt{ \log n} +  \kappa_2\delta^{-1} \sqrt{\log n} / 3,
%\end{align*}
%where we used Lemma \ref{lem::L-1norm} in the second inequality. In part (ii), we proved $\| Dq^\infty \|_\infty \le 3 \kappa_2 \sqrt{\log n}$. Thus,
%\begin{equation*}
%| E_{12} q^\infty | \le \| D^{-1} E_{21} \|_p \cdot \| Dq^\infty \|_\infty \le 3\kappa_2\sqrt{\log n}/2.
%\end{equation*} 
%In part (i), we know that $q^1 = \cal{L}^{-1} E_{21}$ and (\ref{ineqn::q1}) gives The bounds for $\| D \cal{L}^{-1} q^\infty \|_\infty$ and $| E_{12} q^\infty | $ lead to 
%\begin{equation*}
%\| D q^\infty \|_\infty \le \| D \cal{L}^{-1} E_{21} \|_\infty + \| D \cal{L}^{-1} q^\infty E_{12} q^\infty \|_\infty 
%\end{equation*}

\end{proof}

Clearly the event $\bigcap_{s=0}^{T-1}\cal{A}^s \cap \cal{B}^0$ has probability $ 1 - O(n^{-1})$. The following corollary is a preparation for Section \ref{sec::match}.

\begin{cor}\label{cor::q2norm}
Suppose $n \ge 2$ and Assumption \ref{assmp::lambda} with sufficiently small constant $c_0$. With probability $1 - O(n^{-1})$, 
\begin{equation*}
\| \tilde{u} - u \|_2 \le \| q^\infty \|_2 \le 6\kappa_2\sqrt{\log n} \| \ddiag(D_\lambda^{-1}) \|_2.
\end{equation*}
As a consequence, when $c_0$ in Assumption \ref{assmp::lambda} is small enough, $\| q^\infty \|_2  \le 1/4$.
\end{cor}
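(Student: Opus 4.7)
The plan is to reduce the claim to (a) a one-line geometric identity relating $\|\tilde u-u\|_2$ to $\|q^\infty\|_2$, (b) the coordinate-wise bound $\|D_\lambda q^\infty\|_\infty\le 6\kappa_2\sqrt{\log n}$ already supplied by Theorem \ref{thm::indctQs}(ii), and (c) a purely deterministic Hölder argument that controls $\|\ddiag(D_\lambda^{-1})\|_2$ under Assumption \ref{assmp::lambda}.

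First, I would establish the leftmost inequality $\|\tilde u - u\|_2 \le \|q^\infty\|_2$. Using the representation (\ref{eqn::tildeu}) and the fact that $U_\bot^*u=0$, the inner product $\langle\tilde u,u\rangle = (1+\|q^\infty\|_2^2)^{-1/2}$ is real and nonnegative, so $\|\tilde u-u\|_2^2 = 2 - 2(1+\|q^\infty\|_2^2)^{-1/2}$. Setting $x=\|q^\infty\|_2^2\ge 0$, the inequality $2-2/\sqrt{1+x}\le x$ reduces, after rationalizing $\sqrt{1+x}-1 = x/(\sqrt{1+x}+1)$, to $1 \le x+\sqrt{1+x}$, which is immediate.

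Second, for the middle inequality I would write $q^\infty = D_\lambda^{-1}(D_\lambda q^\infty)$ and apply the elementary bound
\begin{equation*}
\|q^\infty\|_2^2 = \sum_{j=1}^{n-1} [D_\lambda^{-1}]_{jj}^{\,2}\,|[D_\lambda q^\infty]_j|^2 \le \|D_\lambda q^\infty\|_\infty^2 \cdot \|\ddiag(D_\lambda^{-1})\|_2^2.
\end{equation*}
Combined with Theorem \ref{thm::indctQs}(ii), which certifies $\|D_\lambda q^\infty\|_\infty \le 6\kappa_2\sqrt{\log n}$ on an event of probability $1-O(n^{-1})$, this yields the asserted bound. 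Since Theorem \ref{thm::indctQs}(ii) is invoked on the high-probability event $\bigcap_{s=0}^{T-1}\cal A^s\cap\cal B^0$, both inequalities hold with probability $1-O(n^{-1})$.

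The main work is the deterministic consequence $\|q^\infty\|_2\le 1/4$, which requires showing $\sqrt{\log n}\,\|\ddiag(D_\lambda^{-1})\|_2 \lesssim c_0$ under Assumption \ref{assmp::lambda}; noting $[\ddiag(D_\lambda^{-1})]_j = [d(\lambda)]_j$, I would split on the form of the assumption. If (\ref{ineqn::eigvalCon2}) holds, then $\|d(\lambda)\|_2\le \|d(\lambda)\|_1\le c_0/\log n$, and the claim follows. If (\ref{ineqn::eigvalCon}) holds for some $p\in[2,\infty)$, I would split further into $p\in[2,4]$ and $p\ge 4$. For $p\in[2,4]$ the conjugate index $p/(p-2)\ge 2$ and Hölder (via the standard $\ell^q$-embedding $\|x\|_2\le n^{1/2-(p-2)/p}\|x\|_{p/(p-2)}$) gives
\begin{equation*}
\sqrt{\log n}\,\|d(\lambda)\|_2 \le \sqrt{\log n}\, n^{(4-p)/(2p)}\|d(\lambda)\|_{p/(p-2)} \le c_0\, n^{(2-p)/(2p)}/\sqrt{p}\le c_0/\sqrt{p}.
\end{equation*}
For $p\ge 4$ the conjugate index satisfies $p/(p-2)\le 2$, so the monotonicity $\|d(\lambda)\|_2\le \|d(\lambda)\|_{p/(p-2)}$ directly gives $\sqrt{\log n}\,\|d(\lambda)\|_2 \le c_0/(\sqrt{p}\,n^{1/p})\le c_0/\sqrt{p}$. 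Plugging into the middle inequality yields $\|q^\infty\|_2\le 6\kappa_2 c_0/\sqrt{2}$, which is $\le 1/4$ once $c_0$ is chosen small enough. The only mildly tricky step is keeping track of the Hölder/embedding direction across the $p\le 4$ vs.\ $p\ge 4$ cases; there is no probabilistic content beyond invoking the events already used in Theorem \ref{thm::indctQs}.
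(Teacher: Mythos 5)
Your proof is correct and follows essentially the same route as the paper: reduce $\|\tilde u - u\|_2$ to $\|q^\infty\|_2$ via the explicit inner-product formula, bound $\|q^\infty\|_2$ by $\|\ddiag(D_\lambda^{-1})\|_2\,\|D_\lambda q^\infty\|_\infty$, and then combine Theorem \ref{thm::indctQs}(ii) with a H\"older/$\ell^q$-embedding step under Assumption \ref{assmp::lambda}. The only difference is cosmetic: you verify the elementary inequality $2 - 2(1+x)^{-1/2}\le x$ by rationalization and redo the $p\le 4$ vs.\ $p\ge 4$ embedding by hand, whereas the paper invokes $(1+a)^{-1/2}\ge 1-a/2$ and cites Lemma \ref{lem::simpleNormBound} directly.
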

\begin{proof}
By Theorem \ref{thm::indctQs}, $\| D_\lambda q^\infty \|_\infty \le 6 \kappa_2 \sqrt{\log n}$ holds with probability $1 - O(n^{-2})$. So
\begin{equation*}
\| q^\infty \|_2 \le \big( \sum_{j=1}^{n-1} \frac{ 1}{[D_\lambda]_{jj}^2} \big)^{1/2} \| D_\lambda q^\infty \|_\infty = \| \ddiag(D_\lambda^{-1}) \|_2 \| D_\lambda q^\infty \|_{\infty},
\end{equation*}
This leads to the desired bound on $\| q^\infty \|_2$. In light of Lemma \ref{lem::simpleNormBound} (also see (\ref{DlambdaConv})), under Assumption \ref{assmp::lambda} where constant $c_0$ is small enough, $\| q^\infty \|_2 \le 1/4$. It is easy to check that $(1 + a )^{-1/2} \ge 1 - a/2$ for any $a \in [0,1]$, so
\begin{equation*}
\| \tilde{u} - u \|_2^2 = 2 - 2\Re(\langle \tilde{u}, u \rangle) = 2 - 2(1 + \|q^\infty \|_2^2)^{-1/2} \le \| q^\infty \|_2^2.
\end{equation*}
This finishes the proof of the corollary.
\end{proof}

\section{Matching Top Eigenvalue with $\tilde{u}$}\label{sec::match}
In the previous section, we constructed a solution $q^\infty$ to the quadratic equations (\ref{eqn::quad}) via an iterative process, in which we maintained a sharp bound $\| Dq^s \|_\infty$ for each iteration $s$. From Theorem \ref{thm::quadEqn}, the associated vector
\begin{equation}\tag{\ref{eqn::tildeu}}
\tilde{u} = (u + U_\bot q^\infty )(1 + \| q^\infty \|_2^2)^{-1/2} 
\end{equation}
is an eigenvector of $\tilde{A}$. In this section, we will show that $\tilde{u}$ is indeed the \textit{leading} eigenvector of $\tilde{A}$.\\

To accomplish this, it suffices to show that the corresponding eigenvalue $\tilde{\lambda}$ is larger than other eigenvalues of $\tilde{A}$. Our strategy is to argue that (i) $\tilde{\lambda} > (\lambda_1 + \lambda_2)/2$; (ii) any other eigenvalue of $\tilde{A}$ is smaller than $(\lambda_1 + \lambda_2)/2$. The first claim (i) can be verified with the help of the bound we have established for $q^\infty$; and the second claim (ii) can be demonstrated by finding an upper bound on $\sup_{x: x \bot \tilde{u}} x^* \tilde{A} x$, which is achieved by using the eigenvalue perturbation result in Section \ref{sec::eigvalPtb}.

\begin{thm}\label{thm::match}
Under Assumption \ref{assmp::lambda}, with probability $1 - O(n^{-1})$, the leading eigenvalue of $\tilde{A}$ has multiplicity $1$, and $\tilde{u}$ is a leading eigenvector of $\tilde{A}$. 
\end{thm}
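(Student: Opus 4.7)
The plan is to verify that the eigenvalue $\tilde{\lambda}$ attached to the constructed eigenvector $\tilde{u}$ strictly exceeds every other eigenvalue of $\tilde{A}$. Since $\tilde{A}$ is Hermitian, it suffices to prove (i) $\tilde{\lambda} > \mu := (\lambda_1 + \lambda_2)/2$, and (ii) $\sup\{y^* \tilde{A} y : \|y\|_2 = 1,\, y \bot \tilde{u}\} \le \mu$. Together these pin $\tilde{\lambda}$ as the simple top eigenvalue with eigenvector $\tilde{u}$. Throughout, abbreviate $q := q^\infty$.

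For step (i), I would identify $\tilde{\lambda}$ explicitly: left-multiplying $\tilde{A} \tilde{u} = \tilde{\lambda} \tilde{u}$ by $u^*$ and using $u^* U_\bot q = 0$ produces $\tilde{\lambda} = \lambda_1 + E_{11} + E_{12} q$. Lemma \ref{lem::indct} bounds $|E_{11}| \le C_0 \sqrt{\log n}$, and H\"older's inequality combined with Lemma \ref{lem::indct}(ii) and the control $\|D q\|_\infty \lesssim \sqrt{\log n}$ from Theorem \ref{thm::indctQs}(ii) yields
\begin{equation*}
|E_{12} q| \le \| D^{-1} E_{21} \|_{p'}\, \| D q \|_p \le \tfrac{1}{2}\, n^{1/p} \| D q \|_\infty \lesssim n^{1/p} \sqrt{\log n}.
\end{equation*}
Assumption \ref{assmp::lambda} forces $\delta \gtrsim \sqrt{p \log n}\, n^{1/p}/c_0$, so for $c_0$ small both corrections are below $\delta/4$, giving $\tilde{\lambda} > \lambda_1 - \delta/2 = \mu$.

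For step (ii), I would parametrize any unit $y \bot \tilde{u}$ as $y = \alpha u + U_\bot z$, where $\tilde{u}^* y = 0$ forces $\alpha = -q^* z$. A direct computation gives
\begin{equation*}
y^* \tilde{A} y - \mu \|y\|_2^2 = (\delta/2 + E_{11}) |q^* z|^2 + z^* (E_{22} - D_*) z - 2 \Re\big((q^* z)^* E_{12} z\big),
\end{equation*}
where $D_* := \mu I - \diag(\lambda_2,\ldots,\lambda_n)$ satisfies $[D_*]_{jj} \ge [D_\lambda]_{jj}/2$, in particular $D_* \succeq (\delta/2) I$. Hence $\ddiag(D_*^{-1}) \le 2\, d(\lambda)$ entrywise, so Assumption \ref{assmp::mu} is inherited from Assumption \ref{assmp::lambda} (with $c_0$ slightly shrunk), and I can invoke Theorem \ref{thm::eigvalPtb} with $D_\mu = (3/4) D_*$ to obtain $z^* E_{22} z \le (3/4)\, z^* D_* z$ uniformly in $z$.

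The chief obstacle is the bilinear cross-term $-2 \Re\big((q^* z)^* E_{12} z\big)$, whose randomness couples $q$ (a function of $E_{22}$) to $E_{12}$. The saving grace is that $E_{21}$ and $E_{22}$ are independent. Accordingly, I would apply the full form of Theorem \ref{thm::eigvalPtb} with $X = E_{22}$ and $g$ proportional to $E_{21}$, iterating over the four sign choices $\pm g, \pm i g$ (each still subgaussian and independent of $X$) to convert the signed bound $\tau \|z\|_2 \Re(g^* z)$ into a two-sided estimate $z^* E_{22} z + c \|z\|_2 |E_{12} z| \le (3/4)\, z^* D_* z$ for an absolute constant $c > 0$. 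Combined with $|q^* z| \le \|q\|_2 \|z\|_2 \le \|z\|_2/4$ from Corollary \ref{cor::q2norm}, this absorbs the cross term via $|2 \Re\big((q^* z)^* E_{12} z\big)| \le (1/2)\|z\|_2 |E_{12} z|$, while the residual positive term $(\delta/2 + E_{11})|q^* z|^2 \le (\delta/16)\|z\|_2^2$ is dominated by the remaining slack $z^* D_* z/4 \ge \delta \|z\|_2^2/8$. Balancing all constants---which ultimately rests on taking $c_0$ in Assumption \ref{assmp::lambda} small enough that $\|q\|_2$ lies strictly below a fixed absolute threshold---is the most delicate bookkeeping step.
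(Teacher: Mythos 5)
Your proof is correct and follows the same high-level plan as the paper: show $\tilde{\lambda} > \mu := (\lambda_1+\lambda_2)/2$, and show the Rayleigh quotient on $\tilde{u}^\perp$ stays below $\mu$. Your step (i) is essentially identical to the paper's (your derivation of $\tilde\lambda = \lambda_1 + E_{11} + E_{12}q$ via $u^*\tilde A \tilde u = \tilde\lambda\, u^*\tilde u$ is in fact a slightly cleaner way to get the same identity than the paper's manipulation of the quadratic equation, and the subsequent bounds are the same). Step (ii) uses the same key tool (Theorem \ref{thm::eigvalPtb}, with $X = E_{22}$ and $g$ taken from $E_{21}$), but a genuinely different bookkeeping strategy for the troublesome cross term $-2\Re\big(\overline{q^*z}\,E_{12}z\big)$. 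The paper parametrizes $z = a u_1 + \sqrt{1-|a|^2}\,U_\perp x$, observes that for fixed $x$ the quantity is \emph{affine} in $\tau_a = 2a/\sqrt{1-a^2}$ and hence maximized at an endpoint of $[0,\sqrt{2}/4]$, and then applies Theorem \ref{thm::eigvalPtb} only twice (once with $\tau = 0$ and once with $\tau = 2/\sqrt{7}$). Your route instead applies the theorem four times with $\pm g,\pm ig$ to synthesize a uniform two-sided bound $z^*E_{22}z + c\,\|z\|_2\,|E_{12}z| \le (3/4)\,z^*D_*z$, and then closes via Cauchy--Schwarz $|q^*z| \le \|q\|_2\|z\|_2 \le \|z\|_2/4$. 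Both are valid; the paper's monotonicity observation is a bit slicker, while your four-sign union bound is more mechanical and avoids the phase-normalization discussion. One thing you should make explicit: your final absorption step requires $c \ge 2\|q\|_2$, and the four-sign trick only delivers $c = 1/\sqrt{2}$ (from $\max\{|\Re w|,|\Im w|\} \ge |w|/\sqrt{2}$), so the argument works only because Corollary \ref{cor::q2norm} gives $\|q\|_2 \le 1/4$, leaving a margin $1/\sqrt{2} > 1/2$. You flagged this dependency but stated only ``for an absolute constant $c>0$''; the proof actually depends on the specific numerical constants lining up, so $c \ge 1/2$ should be checked rather than left generic.
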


Together with Theorem \ref{thm::indctQs}, this proves the main result Theorem \ref{thm::main} in Section \ref{sec::main}.

\appendix
\section{Appendix}  
We provide additional proofs in the appendix. The notations of constants (e.g., $C_1,C_2$) will be reused in different proofs.
\subsection{Proof of Theorem \ref{thm::main2}}

\begin{proof}
Since $A = \diag\{ \lambda_1,\ldots, \lambda_n \}$, where $\lambda_1 > \lambda_2 \ge \ldots \ge \lambda_n$, clearly if we choose $u_j = e_j$ (which forms the standard basis in $\bb{R}^n$), the assumption on randomness holds. And because $E$ is a real symmetric matrix, we only need to consider real vectors. Denote $\tilde{u} = (a, w)^T$, where $a \in \bb{R}$ and $w \in \bb{R}^{n-1}$. Without loss of generality we assume $a \ge 0$. By definition, the leading eigenvector maximizes
\begin{equation*}
\tilde{u}^T \tilde{A} \tilde{u} = \lambda_1 a^2 + \sum_{j=1}^{n-1} \lambda_{j+1} w_j^2 + 2a \langle g, w \rangle = \lambda_1 + 2a \langle g, w \rangle -  \sum_{j=1}^{n-1} (\lambda_1 - \lambda_{j+1}) w_j^2,
\end{equation*}
where we used the normalization $a^2 + \| w \|_2^2 = 1$. Let $\rho_1,\ldots, \rho_{n-1}$ be positive numbers determined by equations
\begin{equation}\label{eqn::rho}
g_j^2 \rho_j^{-1} = \lambda_1- \lambda_{j+1} + \sum_{j =1 }^{n-1} \rho_j, \qquad \forall 1 \le j \le n-1.
\end{equation} 
The existence of $\rho_j$'s and their positivity will be verified later. Using AM-GM inequality,
\begin{align*}
\sum_{j=1}^{n-1} \big( 2a g_j w_j - (\lambda_1 - \lambda_{j+1}) w_j^2 \big) &\le \sum_{j=1}^{n-1} \big(  \rho_j a^2 + \rho_j^{-1} g_j^2 w_j^2 -  (\lambda_1 - \lambda_{j+1}) w_j^2 \big)\\
&= \gamma + \sum_{j=1}^{n-1} ( - \gamma + g_j^2 \rho_j^{-1}- (\lambda_1 - \lambda_{j+1}) ) w_j^2 \\
& = \gamma,
\end{align*}
where we denote $\gamma := \sum_{j =1 }^{n-1} \rho_j$. We claim that the maximum of $\tilde{u}^T \tilde{A} \tilde{u}$ is $\lambda_1 + \gamma$,  and its unique maximizer satisfies $\rho_j a = g_j w_j$ for all $j = 1,\ldots, n-1$. In fact, the equality holds if and only if $\sqrt{\rho_j}a = g_jw_j/\sqrt{\rho_j}$ for all $j$; and using $a^2 + \| w \|_2^2 = 1$ and $a \ge 0$, we can uniquely determine
\begin{equation*}
a = \big( 1 + \sum_{j=1}^{n-1} \rho_j^2 / g_j^2 \big)^{-1/2}, \quad w_j = \rho_j a / g_j, \qquad \forall j=1,\ldots, n-1.
\end{equation*}
The expression is valid almost surely (excluding $g_j = 0$). Note the amount of projection of $\tilde{u}$ onto $u_j$ is $\langle \tilde{u}, u_{j+1} \rangle = w_j$. To derive a lower bound on $|w_j|$, we need to study $\rho_j$. From (\ref{eqn::rho}), we have
\begin{equation*}
\gamma = \sum_{j=1}^{n-1}\frac{g_j^2}{\lambda_1 - \lambda_{j+1} + \gamma}, \qquad \rho_j = \frac{g_j^2}{\lambda_1 - \lambda_{j+1} + \gamma}, \quad \forall j=1,\ldots, n-1.
\end{equation*}
Here the first equation is obtained by summing the second one over $j$. It is clear that there exists a positive $\gamma$ that satisfies the first equation (almost surely); and the positivity of $\rho_j$ follows immediately from the second equation. Moreover, with probability $1 - O(n^{-2})$, $\max_j |g_j| \le \sqrt{6\log n}$, so $\gamma \le 6\log n \sum_j   (\lambda_1 - \lambda_{j+1})^{-1}$. Recall that $d(\lambda)$ is an $(n-1)$-dimensional vector with $j$'th entry being $(\lambda_1 - \lambda_{j+1})^{-1}$. Thus,
\begin{equation*}
\gamma \le 6 \log n \| d(\lambda) \|_1 \le 6 \log n \cdot n^{2/p} \| d(\lambda) \|_{p/(p-2)}, 
\end{equation*}
where we used H\"{o}lder's inequality. Under Assumption \ref{assmp::lambda}, $\delta \ge \| d(\lambda) \|_{p/(p-2)}^{-1} \ge c_0^{-1} \sqrt{\log n}\, n^{1/p}$, so
\begin{equation*}
\gamma \le 6 \sqrt{\log n} \, n^{1/p} \cdot \sqrt{\log n}\, n^{1/p} \| d(\lambda) \|_{p/(p-2)} \le 6 c_0^2 \delta.
\end{equation*}
When $c_0$ is small enough such that $6 c_0^2 \le 1$, we deduce $\gamma \le \delta$, and hence
\begin{equation*}
| w_j | = \frac{ |g_j| a }{\lambda_1 - \lambda_{j+1} + \gamma} \ge \frac{ |g_j| a }{2(\lambda_1 - \lambda_{j+1})}.
\end{equation*}
We claim that $a \ge 1/2$. Once we establish this claim, we will arrive at $|w_j| \ge |g_j|/4(\lambda_1 - \lambda_{j+1})$, thus finishing this proof. To prove this claim, observe that 
\begin{equation*}
\sum_{j=1}^{n-1} \rho_j^2 / g_j^2 \le \sum_{j=1}^{n-1} \frac{6\log n}{(\lambda_1 - \lambda_{j+1})^2} = 6 \log n \| d(\lambda) \|_2^2 \le 6 \log n \cdot n^{2/p} \| d(\lambda) \|_{p/(p-2)}^2,
\end{equation*}
where we used $\max_j |g_j| \le \sqrt{6\log n}$, and also $\| d(\lambda) \|_2 \le n^{1/p} \| d(\lambda) \|_{p/(p-2)}$ from Lemma \ref{lem::simpleNormBound}. By Assumption \ref{assmp::lambda}, we deduce
\begin{equation*}
\sum_{j=1}^{n-1} \rho_j^2 / g_j^2 \le 6c_0^2 \le 1,
\end{equation*}
and therefore $a \ge 1/\sqrt{2} > 1/2$. The proof is complete.
\end{proof}

\subsection{Proof of Theorem \ref{thm::main3}}
\begin{proof}
Denote $\tilde{A}_0 = A_0 + G$. In this proof, we will suppress the subscript $2$ in matrix operator norm $\| \cdot \|_2$. First notice that $\lambda_{\min}(A_0 + G) \ge \lambda_n - \| G \|$, which is nonnegative with probability $1 - O(e^{-cn})$ where $c$ is a constant, (see e.g., \cite{Ver16}). Thus, $ \lambda_{\max}(\tilde{A}_0) = \| \tilde{A}_0 \|$ and it suffices to prove $\| \tilde{A}_0 \| > \lambda_1$ with probability $1 - O(n^{-2})$.  It is easy to see that $\| \tilde{A}_0 \|$ concentrates well around its mean $\bb{E} \| \tilde{A}_0 \|$. Indeed, we view $ \| A_0 + G \|$ as a function of $n(n-1)/2$ i.i.d.\ standard Gaussian variables $\{G_{jk} \}_{1 \le j < k \le n-1}$ and $\{ G_{jj}/\sqrt{2} \}_{j=1}^n$, and we find that $\| A_0 + G \|$ is a $\sqrt{2}$-Lipschitz function. By Gaussian concentration inequality \citep{BouLugMas13}, for any $t \ge 0$,
\begin{equation}\label{ineqn::tailA0}
\bb{P} ( \| \tilde{A}_0 \| - \bb{E} \| \tilde{A}_0 \| > t ) \le e^{- t^2/4}.
\end{equation}
A lower tail bound also holds, which reads
\begin{equation}\label{ineqn::tailA0-2}
\bb{P} ( \| \tilde{A}_0 \| - \bb{E} \| \tilde{A}_0 \| < - t ) \le e^{- t^2/4}.
\end{equation}
We will use (\ref{ineqn::tailA0}) to show $(\bb{E} \| \tilde{A}_0 \| + 2)^2 \ge \bb{E} \| \tilde{A}_0 \|^2$, as follows:
\begin{align*}
\bb{E} \| \tilde{A}_0 \|^2 &= 2\int_0^\infty t\, \bb{P} ( \| \tilde{A}_0 \| > t ) \; dt \\
&\le 2 \int_0 ^{\bb{E} \| \tilde{A}_0 \|}  t \;dt + 2 \int_{\bb{E} \| \tilde{A}_0 \|}^\infty t \, \bb{P} ( \| \tilde{A}_0 \| > t ) \; dt \\
&\le (\bb{E} \| \tilde{A}_0 \| )^2 + 2 \int_0^\infty (\bb{E} \| \tilde{A}_0 \|  + t) \bb{P} ( \| \tilde{A}_0 \| > \bb{E} \| \tilde{A}_0 \| + t ) \; dt \\
&\le (\bb{E} \| \tilde{A}_0 \| )^2 + 2 \int_0^\infty (\bb{E} \| \tilde{A}_0 \|  + t) e^{- t^2/4} \; dt \\
& = (\bb{E} \| \tilde{A}_0 \| )^2 + 2\sqrt{\pi}\, \bb{E} \| \tilde{A}_0 \| + 4 \\
&\le (\bb{E} \| \tilde{A}_0 \| + 2)^2.
\end{align*}
Setting $t = 2\sqrt{2 \log n}$ in (\ref{ineqn::tailA0-2}), we deduce $\| \tilde{A}_0 \| \ge \bb{E} \| \tilde{A}_0 \|-  2\sqrt{2\log n} \ge (\bb{E} \| \tilde{A}_0 \|^2)^{1/2} - 2\sqrt{2\log n} - 2$ with probability $1 - O(n^{-2})$. It is sufficient, therefore, to prove
\begin{equation*}
\bb{E} \| \tilde{A}_0 \|^2 > (\lambda_1 + 2\sqrt{2 \log n} + 2 )^2.
\end{equation*}
Observe that 
\begin{equation*}
\bb{E} \| \tilde{A}_0 \|^2 = \bb{E} \max_{\| u \|_2 = 1} \| A_0u + Gu \|_2^2 \ge \max_{\| u \|_2 = 1} \bb{E} \| A_0u + Gu \|_2^2.
\end{equation*}
The $j$'th entry of $Gu$ is a Gaussian variable with variance $1 + u_j^2$, so
\begin{equation}\label{ineqn::lambda1-2}
\bb{E} \| \tilde{A}_0 \|^2 \ge \max_{\| u \|_2 = 1} \bb{E} \| A_0u \|_2^2 + n = \lambda_2^2 +n,
\end{equation}
where the maximum is attained at $u = (1,0,\ldots,0)^T$. Thus, it boils down to showing 
\begin{equation*}
\lambda_2^2 +n > (\lambda_1 + 2\sqrt{2 \log n} + 2 )^2.
\end{equation*}
To verify it, we use $\lambda_1 - \lambda_2 = n^{1/p}/\log^2 n$ and $\lambda_1 - \lambda_n = (n-1)^{(p-2)/p} \cdot n^{1/p}/\log^2 n$ to obtain
\begin{align*}
~~~~ &(\lambda_1 + 2\sqrt{2 \log n} + 2 )^2 - \lambda_2^2 \\
&\le (\lambda_1 - \lambda_2 + 2\sqrt{2\log n} + 2) (2\lambda_1 + 2\sqrt{2\log n} + 2) \\
&\le \Big(\frac{n^{1/p}}{\log^2 n} + O(\sqrt{\log n})\Big) \Big( \frac{2(n-1)^{(p-2)/p} \cdot n^{1/p}}{\log^2 n} + 2\lambda_n + O(\sqrt{\log n}) \Big)\\
&\le \Big(\frac{n^{1/p}}{\log^2 n} + O(\sqrt{\log n})\Big) \Big( \frac{2n^{1 - 1/p}}{\log^2 n} + O(\sqrt{ n}) \Big)\\
&= o(n).
\end{align*}
The last line is due to $p \ge 2$. Thus, for large enough $n$, (\ref{ineqn::lambda1-2}) is true, which completes the proof.

\end{proof}

\subsection{Additional Proofs in Section \ref{sec::pre}}
\begin{proof}[Proof of Theorem \ref{thm::Chevet}]
Let us write $X_u = \langle u, Xu \rangle$. For each $u \in T$, $X_u$ is a linear combination of $(\Re(X_{jk}))_{j \le k}$ and $(\Im(X_{jk}))_{j \le k}$, so $X_u$ is also sub-gaussian.  For any $u,v \in T$, 
\begin{align*}
X_u - X_v &= \sum_{j=1}^m X_{jj} ( | u_j |^2 - |v_j|^2 ) + 2\sum_{j > k} \Re(X_{jk}) \Re( \bar{u}_j u_k - \bar{v}_j v_k ) \\
& - 2\sum_{j > k} \Im(X_{jk}) \Im(\bar{u}_j  u_k - \bar{v}_j v_k).
\end{align*}
So, by independence, we can bound the $\psi_2$ norm of $X_u - X_v$ (Lemma 5.9 in \cite{Ver10}).
\begin{align*}
\| X_u - X_v \|_{\psi_2}^2 &\le C^2 \sum_{j=1}^m ( | u_j |^2 - |v_j|^2 )^2 + 4C^2 \sum_{j > k} | \bar{u}_j u_k - \bar{v}_j v_k |^2 \\
&\le 2C^2 \| uu^* - vv^* \|_F^2 \\
&\le 8C^2 \rad(T)^2 \| u - v \|_2^2,
\end{align*}
where $C>0$ is an absolute constant. We used the fact that $\| u \|_2 \le \rad(T), \| v \|_2 \le \rad(T)$ in the last inequality. Let $G_u := 4C \Re \langle u, g \rangle$ be a Gaussian process, where $g$ is a standard complex Gaussian vector. It is straightforward to check that $\| G_u - G_v \|_2^2 = 8C^2\|u - v\|_2^2$, so (\ref{ineqn::increment}) holds with $M=\rad(T)$. From Fernique-Talagrand's comparison theorem, we conclude 
\begin{equation*}
\bb{E} \sup_{u \in T} X_u \lesssim \rad(T) \, \bb{E} \sup_{u \in T} \Re \langle u, g \rangle,
\end{equation*}
which results in the desired inequality.
\end{proof}

~

\begin{proof}[Proof of Theorem \ref{thm::opNorm}]
In this proof, we use $C_1, C_2,\ldots,C_5 > 0$ and $c>0$ to denote different absolute constants.

(i) Notice that for $p \in [2,\infty)$,
\begin{equation*}
\| X \|_{p',p} = \sup_{\| u \|_{p'} \le 1} \| X u \|_{p} = \sup_{\|u\|_{p'} \le 1, \|v\|_{p'} \le 1} | \langle v, Xu \rangle | \le  4\sup_{ \| u \|_{p'} \le 1} | \langle u, Xu \rangle |,
\end{equation*}
where we used duality of $L^p$ spaces and polarization identity for Hermitian matrix $X$:
\begin{equation*}
4\langle v, Xu \rangle = X_{u+v} - X_{u-v} +i X_{u +iv} - iX_{u-iv},
\end{equation*}
where $X_u := \langle u, Xu \rangle$. Since $X_u$ takes real values, we can apply Chevet's inequality (Theorem \ref{thm::Chevet}) for both $X$ and $-X$, and obtain 
\begin{align*}
\bb{E} \| X \|_{p',p}  &\le 4 \bb{E} \sup_{\eta \in \{ \pm 1\}} \sup_{ \| u \|_{p'} \le 1} \langle u, \eta Xu \rangle \le 4\bb{E}  \sup_{ \| u \|_{p'} \le 1} \langle u,  Xu \rangle  + 4\bb{E}  \sup_{ \| u \|_{p'} \le 1} \langle u, -Xu \rangle  \\
& \lesssim w(T) \rad(T),
\end{align*} 
where $T= \{ u \in \bb{C}^m:  \| u \|_{p'} \le 1 \}$ is the unit $\ell^{p'}$ ball. Since $1 < p' \le 2$, the radius $\rad(T)$ is bounded by $1$, due to the trivial inequality $\| u \|_{2} \le \| u \|_{p'}$. To bound $w(T)$, we use Jensen's inequality to derive
\begin{equation*}
w(T) = \bb{E} \sup_{\| u \|_{p'} \le 1} | \langle u, g \rangle | = \bb{E} \| g \|_{p} \le ( \bb{E} \| g \|_p^p )^{1/p} = C_p^{1/p} n^{1/p},
\end{equation*}
where $C_p$ is the $p$'th moment of a complex standard Gaussian variable. A Gaussian variable is of course subgaussian, so $C_p^{1/p} \lesssim \sqrt{p}$. Thus, we have proved $\bb{E} \| X \|_{p',p} \lesssim \sqrt{p}\, n^{1/p}$, which concludes the first part of this theorem.\\

(ii) To study the concentration of $\ell^{p'} \to \ell^p$ norm, first we introduce a truncated random matrix $\widetilde{X}$, defined by 
\begin{equation*}
\tilde{X}_{jk} = \Re(X_{jk}) \mathbf{1}\{ |\Re(X_{jk}) | \le \tau \sqrt{\log n} \} + i \Im(X_{jk}) \mathbf{1} \{ |\Im(X_{jk})| \le \tau \sqrt{\log n} \}, 
\end{equation*}
for any $1 \le j \le k \le n$, where $\tau$ is to be determined later. For $j > k$ define $\tilde{X}_{jk} = \overline{ \tilde{X}_{kj}}$. This definition assures that $\widetilde{X}$ is Hermitian, that $\Re(\tilde{X}_{jk}), \Im(\tilde{X}_{jk})$ are independent for $1 \le j \le k \le n$, and that both real and imaginary parts are uniformly bounded by $\tau \sqrt{\log n}$. Though $\widetilde{X}_{jk}$ is no longer centered, its mean is negligible when $\tau$ is large. To justify this, for any centered random variable $\xi$ with $\| \xi \|_{\psi_2} \le 1$, we have tail probability bound $\bb{P} ( | \xi | > t ) \le \exp( 1 - t^2 / C_1^2)$ for all $t>0$ \citep{Ver10}, and this leads to
\begin{align}
~~~~ & ~ 
%| \bb{E} \, \xi \mathbf{1} \{ | \xi | \le \tau \sqrt{\log m} \} | = | \bb{E} \, \xi \mathbf{1} \{ | \xi | > \tau \sqrt{\log m} \} | \le 
\bb{E} \, | \xi | \mathbf{1} \{ | \xi | > \tau \sqrt{\log n} \}  \notag \\
= & ~ \tau \sqrt{\log n} \, \bb{P} ( | \xi | > \tau \sqrt{\log n} ) + \int_{0}^{\infty} \bb{P} ( | \xi | > \tau \sqrt{\log n} + t) \; dt \notag \\
\le& ~ \tau \sqrt{\log n}\, e^{1 - \tau^2 \log n / C_1^2} + e^{1 - \tau^2 \log n / C_1^2} \int_0^\infty e^{-t^2/C_1^2} \; dt \notag \\
\le & ~ (e \tau \sqrt{\log n}  + eC_1 \sqrt{\pi}/2) n^{-\tau^2/C_1^2} \notag \\
\lesssim & ~ n^{-2}, \label{ineqn::truncDev}
\end{align}
where we chose $\tau = 3C_1$ in the last inequality. To invoke Talagrand's concentration inequality, we observe that for any Hermitian matrix $M \in \bb{C}^{n \times n}$,
\begin{equation*}
\| M \|_{p',p} \le 4 \sup_{\| u \|_{p'} \le 1 } | \langle u, Mu \rangle | \le 4 \sup_{\| u \|_2 \le 1 } | \langle u, Mu \rangle | \le 4 \| M \|_F.
\end{equation*}
Thus for any Hermitian $M_1,M_2 \in \bb{C}^{n \times n}$, 
\begin{align}
| \| M_1 \|_{p',p} - \| M_2 \|_{p',p} | &\le \| M_1 - M_2 \|_{p',p} \le 4\|M_1 - M_2 \|_F \notag \\
&\le 8 \big( \sum_{j \le k} |(M_1 - M_2)_{jk} |^2 \big)^{1/2}, \label{ineqn::p'pLip}
\end{align}
so $\| \cdot \|_{p',p}$ is $8$-Lipschitz as a function of $\{ \Re(M_{jk}), \Im(M_{jk}) \}_{j \le k}$. Moreover, $\| \cdot \|_{p',p}$ is a convex function since it is a norm. Applying Talagrand's concentration inequality \citep{Tal95} to $\| \tilde{X} \|_{p',p}$, we deduce 
\begin{equation*}
\bb{P}( | \| \tilde{X} \|_{p',p} - \bb{E} \|  \tilde{X} \|_{p',p} | \ge t\tau \sqrt{\log n} ) \le C_2 \exp \left( -c t^2 \right),
\end{equation*}
Setting $t = C_3\sqrt{\log n}$ where $C_3 > 0$ is some large constant, we have $ | \| \tilde{X} \|_{p',p} - \bb{E} \|  \tilde{X} \|_{p',p} | \le 2C_1C_3 \log n$ with probability $1 - O(n^{-4})$. To relate $\tilde{X}$ to $X$, we use (\ref{ineqn::p'pLip}) to derive 
\begin{equation*}
\big|  \|  \tilde{X} \|_{p',p} -  \|  X\|_{p',p} \big| \le 8 \big( \sum_{j \le k} |(\tilde{X} - X)_{jk} |^2 \big)^{1/2} \le 8  \sum_{j \le k}|(\tilde{X} - X)_{jk} |,
\end{equation*}
and therefore, using the bound (\ref{ineqn::truncDev}) for both the real and imaginary parts, we have
\begin{equation*}
\big| \bb{E} \|  \tilde{X} \|_{p',p} - \bb{E} \| X \|_{p',p} \big| \le 8 \sum_{j \le k} \bb{E} |(\tilde{X} - X)_{jk} | \le 16\sum_{j \le k}  C_4 n^{-2} \le  16C_4,
\end{equation*}
where $C_4 > 0$ is the constant hiding in (\ref{ineqn::truncDev}). Finally, we can bound $\| X \|_{p',p} - \bb{E} \| X \|_{p',p}$ using (i) and the concentration of $\tilde{X}$. Let us write $A_{jk} = \{ \Re(\tilde{X}_{jk}) \neq \Re(X_{jk}) \text{ or } \Im(\tilde{X}_{jk}) \neq \Im(X_{jk}) \}$ for shorthand. From the tail probability bound of sub-gaussian variables and the choice of $\tau$, we have $\max_{j\le k} \bb{P}(A_{jk}) \lesssim n^{-6}$. Thus, for a large absolute constant $C_5$ such that $C_5 \log n - 16C_4 \ge 2C_1C_3 \log n$, we derive
\begin{align*}
~~~~ & ~  \bb{P}(  | \| X \|_{p',p} - \bb{E} \| X \|_{p',p} | \ge C_5 \log n ) \\
\le& ~  \bb{P}(  | \| \tilde{X} \|_{p',p} - \bb{E} \| X \|_{p',p} | \ge C_5 \log n ) + \bb{P}( \bigcup_{j \le k} A_{jk} ) \\
\le & ~ \bb{P}( | \| \tilde{X} \|_{p',p} - \bb{E} \| \tilde{X} \|_{p',p} | \ge C_5 \log n - 16C_4 ) + \sum_{j \le k} \bb{P}(A_{jk}) \\
\lesssim & ~ n^{-4} + \sum_{j \le k} n^{-6} \lesssim n^{-4}.
\end{align*} 
This finishes the proof of part (ii). To conclude the proof, we combine the two bounds in (i) and (ii) and derive
\begin{equation*}
\| X \|_{p',p} \lesssim \log n + \sqrt{p}\, n^{1/p} \lesssim \sqrt{p \log n}\, n^{1/p},
\end{equation*}
where the second $\lesssim$ is due to $\sqrt{\log n} \lesssim \min_{p \in [2,\infty]}  \sqrt{p}\, n^{1/p}$.
\end{proof}

~

\begin{proof}[Proof of Corollary \ref{cor::conctrNorm}]
Without loss of generality we can assume that $\xi$ is a real vector (incurring a constant of at most $2$). Let us consider a special random matrix $X$ in Theorem \ref{thm::opNorm}. Let $X_{11} = 0, X_{j+1,1} = \xi_j$ for $j=1,\ldots,n$, and $X_{jk} = 0 $ for $2 \le j,k \le n+1$. We only need to show $\| \xi \|_p \le \| X \|_{p',p}$. This is true because
\begin{equation*}
\| X \|_{p',p} = \sup_{\substack{\| u \|_{p'} \le 1 \\ u \in \bb{C}^{n+1}}} \| X u \|_p \ge \sup_{\substack{\| u \|_{p'} \le 1 \\ u \in \bb{C}^{n+1}}} | (X u)_1 |  \ge \sup_{\substack{\| u \|_{p'} \le 1 \\ u \in \bb{C}^{n}}} | \langle \xi, u \rangle | = \| \xi \|_p,
\end{equation*}
where we used duality of $L_p$ spaces.
\end{proof}

~

\begin{proof}[Proof of Theorem \ref{thm::eigvalPtb}]

Without loss of generality we can assume $\mu_1 \ge \ldots \ge \mu_n > 0$. Let us denote $\rho(z) := z^* D_\mu z = \sum_{j=1}^n \mu_j |z_j|^2$ for $z \in \bb{C}^n$; we also denote $X_z := z^* X z + \tau \| z \|_2 \Re(g^* z)$. First observe that the desired inequality (\ref{ineqn::eigval1}) is equivalent to
\begin{align*}
& \qquad ~~ \sup_{z \in \bb{C}^n} (X_z - \rho(z)) \le 0, \\
&\Leftrightarrow ~ \sup_{z: \rho(z) = \mu_n} ( X_z - \rho(z) ) \le 0, \\
&\Leftrightarrow ~ \sup_{z: \rho(z) \le \mu_n}  X_z \le \mu_n,
\end{align*}
where we used homogeneity of $\rho(z)$ and $X_z$, meaning that $\rho(az) = a^2 \rho(z)$ and $X_{az} = a^2 X_z$ for any $a \in \bb{R}^+$; and the second equivalence is due to the fact that maxima must be attained  either when $z=0$ or on the boundary of the set $\{z:\rho(z) \le \mu_n\}$. We can identify $\bb{C}^n$ with $\bb{R}^{2n}$ in the obvious way (and view $z$ also as an $2n$-dimensional real vector). Then the index set $\{z:\rho(z) \le \mu_n\}$ becomes
\begin{equation*}
\Omega := \big\{ x \in \bb{R}^{2n}: \sum_{j=1}^n \frac{ |x_{2j-1}|^2 +  |x_{2j}|^2}{ \mu_n / \mu_j} \le 1 \big\}.
\end{equation*}
Note that $\Omega \subset B_1^{2n}$, the unit ball in $\bb{R}^{2n}$. To investigate $\sup_{z\in \Omega}  X_z$, we observe that $X_z$ is a subgaussian process, which allows us to employ powerful tools developed in supreme of processes. For any complex $u,v$ with $\| u \|_2 \le 1, \| v \|_2 \le 1$, $X_u - X_v$ is a linear combination of independent subgaussian variables, so $X_u - X_v$ is subgaussian, and 
\begin{align*}
\| X_u - X_v \|_{\psi_2}^2 &\le C^2 \sum_{j=1}^n ( | u_j |^2 - |v_j|^2 )^2 + 4C^2 \sum_{j > k} | \bar{u}_j u_k - \bar{v}_j v_k |^2 \\
&~+ C^2\tau^2 \sum_{j=1}^n \big| \| u \|_2 u_j - \| v \|_2 v_j \big|^2 \\
&\le 2C^2 \| uu^* - vv^* \|_F^2 +2C^2 \| u - v \|_2^2 ( \| u \|_2^2 + \|v \|_2^2)\\
&\le 12C^2 \| u - v\|_2^2.
\end{align*}

The classical chaining argument (\cite{pisier1983some}; also see \cite{Van14} Theorem 5.29) shows 
\begin{equation}\label{ineqn::chain}
\bb{P}( \sup_{z \in \Omega}  X_z \ge C_0 \int_0^{\infty} \sqrt{\log N(\Omega, \| \cdot \|_2, \varepsilon)} \; d\varepsilon + t ) \le C_0e^{-t^2/C_0},
\end{equation} 
Note $C, C_0>0$ are absolute constants. Apparently, since $\rad(\Omega) \le 1$, when $\varepsilon > 1$ the covering number $N(\Omega, \| \cdot \|_2, \varepsilon) = 1$. For $\varepsilon \le 1$, we let the ellipsoid $E_a = \Omega/\varepsilon$; in other words, the axes of the ellipsoid are $a_{2j-1} = a_{2j} = \varepsilon^{-1} \sqrt{\mu_n/\mu_j} $ for $j=1, \ldots, n$. Applying Lemma \ref{lem::covering} to $E_a$ and setting $\theta = 1/4$, we obtain
\begin{equation*}
\log N(\Omega, \| \cdot \|_2, \varepsilon) \le 2\log h(a) \cdot   | \{ j: \mu_n / \mu_j > \varepsilon^2 \}|  + 2\log(4\bar{C}) \cdot | \{ j: \mu_n / \mu_j > \varepsilon^2/2 \}|
\end{equation*}
for any $\varepsilon > 0$. Using a trivial inequality $\sqrt{x + y} \le \sqrt{x} + \sqrt{y}$, we only need to bound two integrals
\begin{equation}\label{eqn::Int}
I_1:= \int_0^{1} \sqrt{ | \{ j: \mu_n / \mu_j > \varepsilon^2/2 \}| } \; d\varepsilon,  \quad 
I_2:= \int_0^1  \sqrt{ 
\log h(a) \cdot  | \{ j: \mu_n / \mu_j > \varepsilon^2 \}|}\; d\varepsilon .
\end{equation}
The first integral $I_1$ is bounded by $\int_0^{\sqrt{2}} \sqrt{ | \{ j: \sqrt{2\mu_n / \mu_j} > \varepsilon \}| } \; d\varepsilon$, whose integrand is a step function with jumps at $\sqrt{2\mu_n / \mu_j}, j=1,\ldots,n$. So an upper bound of $I_1$ is
\begin{align*}
I_1 &\le  \sum_{k=1}^n \sqrt{n-k+1} ( \sqrt{2 \mu_n / \mu_k} - \sqrt{2 \mu_n / \mu_{k-1}}) \qquad \mu_0 :=\infty \\
&= \sum_{k=1}^n \sqrt{2 \mu_n/ \mu_k} (\sqrt{n-k+1} - \sqrt{n-k})\\
&\le \sqrt{2} + \sqrt{\mu_n/2} \sum_{k=1}^{n-1} ( (n-k) \mu_k)^{-1/2}.
\end{align*}
where we used $\sqrt{n-k+1} - \sqrt{n-k} \le (2\sqrt{n-k})^{-1}$ in the last inequality.

To bound the second integral $I_2$, we use $\log h(a) \le \log a_{2n} = \log (\varepsilon^{-1})$. Thus, 
\begin{equation*}
\log h(a) \cdot  | \{ j: \mu_n / \mu_j > \varepsilon^2 \}| \le \begin{cases}
n \log (\varepsilon^{-1}), & 0 \le \varepsilon \le 1/n,  \\
\log n \cdot | \{ j: \sqrt{\mu_n / \mu_j} > \varepsilon \}|, & 1/n < \varepsilon \le 1.
\end{cases}
\end{equation*}
So we can split $I_2$ into integrals over subinterval $[0,1/n]$ and $[1/n, 1]$, and this yields
\begin{align*}
I_2 &\le \sqrt{n} \int_0^{1/n} \sqrt{ \log (\varepsilon^{-1}) }\; d\varepsilon + \sqrt{\log n} \int_{1/n}^1 | \{ j: \sqrt{\mu_n / \mu_j} > \varepsilon \}| \; d\varepsilon \\
&\le \sqrt{n} \int_0^{1/n} \sqrt{ \log (\varepsilon^{-1}) }\; d\varepsilon+ \sqrt{\log n} \int_0^1 | \{ j: \sqrt{\mu_n / \mu_j} > \varepsilon \}| \; d\varepsilon \\
&\le \sqrt{n} \int_0^{1/n} \sqrt{ \log (\varepsilon^{-1}) }\; d\varepsilon + \sqrt{\log n  \cdot \mu_n} \sum_{k=1}^{n-1} ( (n-k) \mu_k)^{-1/2} /2 + \sqrt{\log n}.
\end{align*}
The last inequality is derived similarly as we have shown for $I_1$. By change of variables,
\begin{equation*}
\sqrt{n} \int_0^{1/n} \sqrt{ \log \varepsilon^{-1} }\; d\varepsilon = n^{-1/2} \int_0^1 \sqrt{ \log (n/z)} \; dz \le n^{-1/2} \int_0^1 \sqrt{\log n} + \sqrt{\log(z^{-1})} \;dz.
\end{equation*}
Since $\int_0^1 \sqrt{\log (z^{-1})} \; dz < \infty$, we have $\sqrt{n} \int_0^{1/n} \sqrt{ \log \varepsilon^{-1} }\; d\varepsilon  \le \sqrt{\log n/n} + C_1 n^{-1/2}$, where $C_1>0$ is an absolute constant. Now with these upper bounds on $I_1$ and $I_2$, we are able to bound $\int_0^{\infty}\sqrt{\log N(\Omega, \| \cdot \|_2, \varepsilon)} \; d\varepsilon$.
\begin{equation*}
\int_0^{\infty}\sqrt{\log N(\Omega, \| \cdot \|_2, \varepsilon)} \; d\varepsilon \le C_2 \sqrt{\log n \cdot \mu_n} \sum_{k=1}^{n-1} ( (n-k) \mu_k)^{-1/2} + C_2\sqrt{\log n} ,
\end{equation*}
where $C_2 >0$ is an absolute constant. For any $p \in [2, \infty)$, we can use H\"{o}lder's inequality to derive
\begin{align*}
\sum_{k=1}^{n-1} ( (n-k) \mu_k)^{-1/2} &\le \big( \sum_{k=1}^{n-1} (n-k)^{-p/(p+2)} \big)^{(p+2)/2p} \cdot \big( \sum_{k=1}^{n-1} \mu_k^{-p/(p-2)} \big)^{(p-2)/2p} \\
&\le \big( \int_0^{n-1} x^{-p/(p+2)} \; dx\big)^{(p+2)/2p} \cdot \| \ddiag(D_\mu^{-1}) \|_{p/(p-2)}^{1/2} \\
&\le C_p n^{1/p} \| \ddiag(D_\mu^{-1}) \|_{p/(p-2)}^{1/2},
\end{align*}
where $C_p = ( (p+2)/2 )^{(p+2)/2p}$. Note that the bound is valid for $p=2$, in which case $\| \cdot \|_{p / (p-2)}$ is simply the $\ell^\infty$ norm. Since $(p+2)/2 \asymp p$ and $p^{1/p} \asymp 1$, we have $C_p \asymp \sqrt{p}$. Suppose the absolute constant $c_0>0$ is small enough. Then Assumption \ref{assmp::mu} implies the following inequalities
\begin{equation}\label{ineqn::eigvalCond}
\begin{cases}
C_p \sqrt{\log n}\, n^{1/p} \| \ddiag(D_\mu^{-1}) \|_{p/(p-2)} \le 1/4C_0C_2, \\
\mu_n^{-1} \sqrt{\log n} \le 1/4C_0C_2, \\
\mu_n^{-1} \sqrt{\log n} \le 1 / 4 \sqrt{C_0}.
\end{cases}
\end{equation}
The first inequality in (\ref{ineqn::eigvalCond}) implies 
\begin{equation}
C_p \sqrt{\log n}\, n^{1/p} \mu_n^{-1} \le C_p \sqrt{\log n}\, n^{1/p} \| \ddiag(D_\mu^{-1}) \|_{p/(p-2)} \le 1/4C_0C_2.
\end{equation}
It follows that $C_p^2 \log n \cdot n^{2/p} \| \ddiag(D_\mu^{-1}) \|_{p/(p-2)} \le \mu_n/(4C_0C_2)^2$. Taking square roots on both sides yields
\begin{equation*}
C_2C_p \sqrt{\log n \cdot \mu_n}\, n^{1/p} \| \ddiag(D_\mu^{-1}) \|_{p/(p-2)}^{1/2} \le \mu_n/4C_0.
\end{equation*}
The second inequality in (\ref{ineqn::eigvalCond}) implies  $C_2\sqrt{\log n} \le \mu_n/4C_0$. Thus we deduce
\begin{equation}\label{ineqn::coveringItgbound}
\int_0^{\infty}\sqrt{\log N(\Omega, \| \cdot \|_2, \varepsilon)} \; d\varepsilon \le \mu_n/2C_0.
\end{equation}
If $p = \infty$, H\"{o}lder's inequality gives
\begin{equation*}
\sum_{k=1}^{n-1}( (n-k)\mu_k)^{-1/2} \le  \big( \sum_{k=1}^{n-1} (n-k)^{-1} \big)^{1/2} \cdot \big( \sum_{k=1}^{n-1} \mu_k^{-1} \big)^{1/2} \le \sqrt{1 + \log n} \cdot \| \ddiag(D_{\mu}^{-1}) \|_{1}^{1/2}.
\end{equation*}
For suitably small $c_0 > 0$, the following holds
\begin{equation}\label{ineqn::eigvalCond2}
\begin{cases}
\sqrt{\log n ( 1 + \log n)}\,  \| \ddiag(D_\mu^{-1}) \|_{1} \le 1/4C_0C_2, \\
\mu_n^{-1} \sqrt{\log n} \le 1/4C_0C_2, \\
\mu_n^{-1} \sqrt{\log n} \le 1 / 4 \sqrt{C_0}.
\end{cases}
\end{equation}
As before, the first two inequalities above imply (\ref{ineqn::coveringItgbound}).

To conclude with a `high probability' result, we set $t = \mu_n/2$ in (\ref{ineqn::chain}). For $p \in [2,\infty]$, from (\ref{ineqn::eigvalCond}) and (\ref{ineqn::eigvalCond2}) we know $\mu_n \ge 4 \sqrt{C_0\log n}$, we conclude that with probability $1 - O(n^{-4})$, $\sup_{z \in \Omega} X_z \le \mu_n$, which finishes the proof.

\end{proof}

\subsection{Additional Proofs in Section \ref{sec::L-1}}

\begin{proof}[Proof of Lemma \ref{lem::L-1norm}]
Since $D$ is invertible, $\cal{L}$ is invertible if and only if $I_{n-1} - E_{22} D^{-1}$ is invertible. For any $y \in \bb{C}^{n-1}$, it is easy to see that $u \mapsto y + E_{22} D^{-1} u$ is a contraction mapping under $\ell^p$ norm, so by contraction mapping theorem, there is a unique fixed-point, which is a solution to equation $(I_{n-1} - E_{22} D^{-1})x = y$. This proves invertibility of $\cal{L}$. 
%From Lemma (\ref{lem::indct}) (i) we know that $\| E_{22} D^{-1} \|_{p,p} \le 1/2$ holds with probability $1 - O(n^{-2})$. 
From the identity $(I_{n-1} - E_{22} D^{-1} ) D \cal{L}^{-1} u = u$, we have
\begin{equation*}
\| u \|_p \ge \| D \cal{L}^{-1} u \|_p - \| E_{22}D^{-1} \cdot D\cal{L}^{-1} u \|_p \ge \| D \cal{L}^{-1} u \|_p / 2.
\end{equation*}
Thus, $\| D \cal{L}^{-1} u \|_p \le 2\|u\|_p$.
\end{proof}

The next two easy lemmas are useful for norm conversion.

\begin{lem}\label{lem::diagMatNorm}
Suppose $p \in [2,\infty]$, and $z \in \bb{C}^n$ is a vector. There is an identity between matrix operator norm of the diagonal matrix $\diag(z)$ and vector norm of $z$: $\| \diag(z) \|_{p,p'} = \| z \|_{p/(p-2)}$.
\end{lem}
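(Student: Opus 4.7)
The plan is to prove the equality by directly expanding the operator norm as a supremum, applying Hölder's inequality with a carefully chosen exponent for the upper bound, and then exhibiting an extremizer for the matching lower bound.

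First, I would unfold the definition. Writing $q := p/(p-2)$ and using the entrywise action $(\diag(z)u)_i = z_i u_i$, we have
\begin{equation*}
\|\diag(z)\|_{p,p'} = \sup_{\|u\|_p \le 1} \Bigl( \sum_i |z_i|^{p'} |u_i|^{p'} \Bigr)^{1/p'}.
\end{equation*}
The endpoint cases $p=2$ (where $q=\infty$ and the identity reduces to the well-known $\|\diag(z)\|_{2,2} = \|z\|_\infty$) and $p=\infty$ (where $q=1$ and it reduces to $\|\diag(z)\|_{\infty,1} = \|z\|_1$) can be disposed of separately to avoid juggling infinite exponents; the main work is the interior range $p \in (2,\infty)$.

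For the upper bound, I would apply Hölder's inequality to the sum $\sum_i |z_i|^{p'} \cdot |u_i|^{p'}$ with exponents $\alpha, \beta$ satisfying $1/\alpha + 1/\beta = 1$ chosen so that $p'\beta = p$ (so the second factor becomes $\|u\|_p^{p'} \le 1$). A short computation using $p' = p/(p-1)$ gives $\beta = p/p'$ and $p'\alpha = p/(p-2) = q$, yielding exactly
\begin{equation*}
\sum_i |z_i|^{p'} |u_i|^{p'} \le \|z\|_q^{p'}\,\|u\|_p^{p'}.
\end{equation*}
Taking $p'$-th roots and supremizing over $\|u\|_p \le 1$ gives the inequality $\|\diag(z)\|_{p,p'} \le \|z\|_q$.

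For the matching lower bound, I would exhibit an extremizer that saturates Hölder. The natural choice is $u_i = \|z\|_q^{-q/p}\,|z_i|^{q/p}\,\varepsilon_i$, where $\varepsilon_i$ is any unimodular phase (the particular choice is irrelevant since the norm depends only on $|z_i u_i|$). Then $\|u\|_p^p = \|z\|_q^{-q} \sum_i |z_i|^q = 1$, and a direct calculation, using the identity $p'(1+q/p) = q$ that falls out of the same arithmetic as above, shows $\|\diag(z) u\|_{p'} = \|z\|_q$. The main potential pitfall is purely bookkeeping — making sure the exponent identities $p'\alpha = q$ and $p'(1+q/p) = q$ are verified cleanly, and handling the degenerate case $z = 0$ and the endpoints $p \in \{2,\infty\}$ by separate inspection so that the formulas involving $q = \infty$ or $q = 1$ are interpreted correctly.
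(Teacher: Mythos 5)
Your proposal is correct and follows essentially the same route as the paper's proof: Hölder's inequality with the conjugate pair determined by $p'\beta = p$ for the upper bound (the paper's $q, q'$ are your $\beta, \alpha$), the same extremizer $|u_j| \propto |z_j|^{q/p} = |z_j|^{1/(p-2)}$ for equality, and separate treatment of the endpoints $p \in \{2,\infty\}$. The exponent identities you flag as the main bookkeeping risk all check out.
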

\begin{proof}
This identity is trivial for $p=\infty$ and $p=2$. For $p \in (2,\infty)$, we let $q = p/p'$, where $p'$ satisfies $1/p + 1/p' = 1$. Since $q \in (1,\infty)$, we can also let $q' \in (1,\infty)$ be such that $1/q + 1/q' = 1$. For any $u \in \bb{C}^n$ with $\| u \|_p \le 1$, by H\"{o}lder's inequality,
\begin{equation*}
\| \diag(z) u \|_{p'}^{p'} = \sum_{j=1}^n |z_j u_j |^{p'} \le \big( \sum_{j=1}^n |z_j|^{p'q'} \big)^{1/q'} \big( \sum_{j=1}^n |u_j|^{p'q} \big)^{1/q}.
\end{equation*}
Since $p'q = p$ and $\| u \|_p \le 1$, it follows that $\| \diag(z) u \|_{p'} \le \| z \|_{p'q'} = \| z \|_{p/(p-2)}$. The equality is attained for $u$ with $\| u \|_p = 1$ and $|u_j| \propto |z_j|^{1/(p-2)}$. This proves the identity $\| \diag(z) \|_{p,p'} = \| z \|_{p/(p-2)}$.
\end{proof}

\begin{lem}\label{lem::simpleNormBound}
Suppose $x \in \bb{R}^N$, and $p \in [2,\infty)$. Then, 
\begin{equation*}
\| x \|_2 \le N^{1/p} \| x \|_{p/(p-2)}, \qquad \forall p \in [2,\infty).
\end{equation*}
\end{lem}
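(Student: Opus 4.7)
\textbf{Proof plan for Lemma \ref{lem::simpleNormBound}.} The bound reduces cleanly to H\"older's inequality together with monotonicity of $\ell^s$ norms on $\bb{R}^N$, so the argument is essentially a bookkeeping exercise in exponents and I do not anticipate a substantive obstacle. Let $q := p/(p-2)$, so $q$ ranges over $[1,\infty]$ as $p$ ranges over $[2,\infty)$, with the convention $q=\infty$ at $p=2$. I would split into two regimes according to whether $q\ge 2$ or $q\le 2$.

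The easy regime is $p\ge 4$, where $q\le 2$. Monotonicity of $\ell^s$ norms in $s$ on a finite index set gives $\|x\|_2\le\|x\|_q$ directly, and since $N^{1/p}\ge 1$ the desired inequality follows at once without using any randomness or finer structure.

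The nontrivial regime is $2\le p\le 4$, where $q\ge 2$. Here I would apply H\"older's inequality to $\|x\|_2^2=\sum_{j=1}^N |x_j|^2\cdot 1$ with conjugate exponents $q/2$ and $q/(q-2)$:
\[
\|x\|_2^2 \;\le\; \Bigl(\sum_{j=1}^N |x_j|^q\Bigr)^{2/q} N^{(q-2)/q} \;=\; \|x\|_q^{2}\,N^{(q-2)/q}.
\]
Taking square roots yields $\|x\|_2\le N^{(q-2)/(2q)}\|x\|_q$. Substituting $1/q=(p-2)/p$ gives $(q-2)/(2q)=1/2-1/q=(4-p)/(2p)$, and the elementary inequality $(4-p)/(2p)\le 1/p$ is equivalent to $p\ge 2$, which holds throughout. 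Combining these bounds yields $\|x\|_2\le N^{1/p}\|x\|_q$, as claimed. The endpoint $p=2$ with $q=\infty$ amounts to the standard Cauchy--Schwarz bound $\|x\|_2\le\sqrt{N}\,\|x\|_\infty$, which can be handled directly or recovered by continuity. The only thing to watch is the endpoint convention for $q$; otherwise the proof is a one-line application of H\"older.
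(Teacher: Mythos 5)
Your proposal is correct and follows essentially the same route as the paper: split into the cases $p\ge 4$ (where $q=p/(p-2)\le 2$ and monotonicity gives $\|x\|_2\le\|x\|_q$) and $2\le p<4$ (where the H\"older-based norm comparison $\|x\|_2\le N^{1/2-1/q}\|x\|_q$ plus the observation $(4-p)/(2p)\le 1/p$ finishes the job). The paper simply cites the standard norm-comparison inequality $\|y\|_s\le N^{1/s-1/r}\|y\|_r$ rather than re-deriving it via H\"older, but the underlying argument is identical.
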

\begin{proof}
For any vector $y \in \bb{R}^N$ and $1 \le s \le r \le \infty$, there is a trivial bound between vector norms $\| y \|_r \le \| y \|_s \le N^{1/s - 1/r} \| x \|_r$. If $p \ge 4$, the result is trivial; if $p \in [2,4)$, since $1/2 - (p-2)/p = (4-p)/2p \le 1/p$, the result also follows.
\end{proof}

Now we can prove Lemma \ref{lem::indct}. The proof relies on Theorem \ref{thm::opNorm} and basic properties of subgaussian variables.
\begin{proof}[Proof of Lemma \ref{lem::indct}]
In this proof, $C_0,C_1,\ldots, C_6$ are also some positive absolute constants.\\
(i) From an equivalent definition of subgaussian random variables (see \citep{Ver10}), each entry of $\tilde{E}$ can be bounded by $C_0\sqrt{\log n}$ with probability $1- O(n^{-6})$, where $C_0>0$ is an absolute constant. Thus it follows from union bound that with probability $1 - O(n^{-4})$, $\| \tilde{E} \|_{\infty} \le C_0 \sqrt{\log n}$ .\\

(ii) Note that $D = D_\lambda + E_{11}I_{n-1}$. With suitably small constant $c_0$, Assumption \ref{assmp::lambda} implies $\delta \ge 2C_0  \sqrt{\log n}$. As a consequence of part (i), with probability $1 - O(n^{-4})$, each diagonal entry of $D$ satisfies $D_{jj} \ge (D_\lambda)_{jj}/2$, so $\| D^{-1} \|_{p,p'} \le 2 \| D_\lambda^{-1} \|_{p,p'}$. To prove (\ref{ineqn::lemIndct1}), let us first consider $2 \le p < \infty$. From Theorem \ref{thm::opNorm}, we know that with probability $1 - O(n^{-4})$, $\| E_{22} \|_{p',p} \le C_1\sqrt{p \log n}\, n^{1/p}$. Thus,
\begin{equation*}
\| E_{22} D^{-1} \|_{p,p} \le \| E_{22} \|_{p',p} \| D^{-1} \|_{p,p'} \le 2C_1\sqrt{p \log n}\, n^{1/p}\| D_\lambda^{-1} \|_{p,p'} . 
\end{equation*}
By Lemma \ref{lem::diagMatNorm}, $\| D_\lambda^{-1} \|_{p,p'} = \| \ddiag(D_\lambda^{-1}) \|_{p/(p-2)}$. Thus, under Assumption \ref{assmp::lambda}, $\| E_{22} D^{-1} \|_{p,p} \le 2c_0C_0C_1$, which is smaller than $1/2$ when $c_0$ is small enough. This proves the first inequality in (\ref{ineqn::lemIndct1}), and the second inequality follows similarly.
 
By Corollary \ref{cor::conctrNorm}, we have $\| E_{21} \|_p \le C_2 \sqrt{p \log n}\, n^{1/p}$. In the same way as just derived, we have
\begin{equation*}
\| D^{-1}E_{21} \|_{p'} \le \| D^{-1} \|_{p,p'} \| E_{21} \|_{p} \le 2C_2\sqrt{p \log n}\, n^{1/p} \| \ddiag(D_\lambda^{-1}) \|_{p/(p-2)} \le 1/2
\end{equation*}
for a small constant $c_0$. 

For the case $p = \infty$, we deduce from part (i) that, with probability $1 - O(n^{-4})$,
\begin{equation} \label{ineqn::E22InfNorm}
\max\{\| E_{22} D^{-1} \|_{\infty, \infty},  \| E_{22}^{(m)} D^{-1} \|_{\infty, \infty}, \| D^{-1} E_{21} \|_1\} \le C_0 \sqrt{\log n} \| \ddiag(D^{-1}) \|_{1} \le 1/2,
\end{equation}
The second inequality follows from $ \| \ddiag(D^{-1}) \|_{1} \le 2 \| \ddiag(D_\lambda^{-1}) \|_{1}$ and Assumption \ref{assmp::lambda} with constant $c_0$ being small enough.\\

(iii) First consider $2 \le p < \infty$. Without loss of generality, we can assume that $\xi$ is a deterministic vector, since we can always condition on $\xi$ in the first place.  The $m$'th coordinate of $\Delta E_{22}^{(m)} D^{-1} \xi$ is $\sum_{j=1}^{n-1}(E_{22})_{mj} \xi_j / D_{jj}$, which is a sum of independent subgaussian random variables. So we can bound its $\psi_2$ norm (see \cite{Ver10}) as follows.
\begin{equation*}
\big\| \sum_{j=1}^{n-1}(E_{22})_{mj} \xi_j / D_{jj} \big\|_{\psi_2}^2 \le C_3 \sum_{j=1}^{n-1}  D_{jj}^{-2} | \xi_j|^2 \le C_3 \sum_{j=1}^{n-1}  D_{jj}^{-2} \| \xi \|_{\infty}^2.
\end{equation*}
It follows that with probability $1-O(n^{-4})$, 
\begin{equation*}
\big| \sum_{j=1}^{n-1}(E_{22})_{mj} \xi_j / D_{jj} \big| \le C_4 \sqrt{\log n}\, \big( \sum_{j=1}^{n-1} D_{jj}^{-2} \big)^{1/2} \| \xi \|_{\infty}.
\end{equation*}
For $j \neq m$, the $j$'th coordinate of $\Delta E_{22}^{(m)} D^{-1} \xi$ is $(E_{22})_{jm} \xi_m / D_{mm}$. From Corollary \ref{cor::conctrNorm}, we can bound the norm of $m$'th column of $\Delta E_{22}^{(m)}$: $\| (\Delta E_{22}^{(m)})_{\cdot m} \|_p \le C_5 \sqrt{p \log n}\, n^{1/p}$, and this leads to
\begin{align*}
\| \Delta E_{22}^{(m)} D^{-1} \xi \|_p &\le | (\Delta E_{22}^{(m)} D^{-1} \xi)_m | + \Big( \sum_{j \ne m} |(\Delta E_{22}^{(m)} D^{-1} \xi)_j|^p \Big)^{1/p} \\
&\le C_4 \sqrt{\log n}\, \big( \sum D_{jj}^{-2} \big)^{1/2} \| \xi \|_{\infty} + C_5\sqrt{p \log n}\, n^{1/p} D_{mm}^{-1} | \xi_m| \\
&\le 2C_4  \sqrt{\log n}\, \| \ddiag(D_\lambda^{-1}) \|_2 \| \xi \|_{\infty} + 2C_5 \sqrt{p \log n}\, n^{1/p} \delta^{-1} \| \xi \|_{\infty} 
\end{align*}
with probability $1-O(n^{-4})$, where we used $D_{jj} \ge \lambda_1 - \lambda_j - C_0\sqrt{\log n} \ge \delta/2$ due to part (i) and Assumption \ref{assmp::lambda}. Lemma \ref{lem::simpleNormBound} gives a useful conversion between vector norms, and we have 
\begin{equation}\label{DlambdaConv}
\| \ddiag(D_\lambda^{-1}) \|_2 \le n^{1/p} \| \ddiag(D_\lambda^{-1}) \|_{p/(p-2)}, \qquad \forall p \in [2,\infty).
\end{equation}
We conclude that for some small constant $c_0$, $\| \Delta E_{22}^{(m)} D^{-1} \xi \|_p \le \| \xi \|_{\infty}/12$ holds with probability $1-O(n^{-4})$. 

If $p = \infty$, from part (i) and (ii) we know that for any $m = 1,\ldots, n-1$, with probability $1 - O(n^{-4})$,
\begin{equation*}
\| \Delta E_{22}^{(m)} D^{-1} \|_{\infty, \infty} \le C_0 \sqrt{\log n} \| \ddiag(D^{-1}) \|_1 \le 2C_0 \sqrt{\log n} \| \ddiag(D_\lambda^{-1}) \|_1,
\end{equation*}
which is smaller than $1/12$ under Assumption \ref{assmp::lambda}, where the constant $c_0$ is taken to be small enough. Consequently, the desired bound (\ref{ineqn::sparse}) follows. 
\end{proof}

\subsection{Additional Proofs in Section \ref{sec::match}}

\begin{proof}[Proof of Theorem \ref{thm::match}]
We follow our plan as outlined in Section \ref{sec::match}. We will prove that with probability $1 - O(n^{-1})$, (i) $\tilde{\lambda} > (\lambda_1 + \lambda_2)/2$; (ii) for any unit vector $z$ orthogonal to $\tilde{u}$, $z^* \tilde{A} z \le (\lambda_1 + \lambda_2)/2$. \\
%Assume that the constant $c_0$ in Assumption \ref{assmp::lambda} is sufficiently small so that (\ref{ineqn::match4}) holds.  \\

Let us first consider (i). We claim the following identity
\begin{equation}\label{eqn::tildeA1}
\tilde{u}^* \tilde{A} \tilde{u} = (\lambda_1 + E_{11}) + E_{12}q^\infty,
\end{equation}
which helps us to relate $\tilde{\lambda}$ to $\lambda_1$. To verify this identity, we substitute (\ref{eqn::tildeu}) into the left-hand side of (\ref{eqn::tildeA1}):
\begin{align*}
&~( u + U_\bot q^\infty)^* (A+E) ( u + U_\bot q^\infty) \\
= & ~ (\lambda_1 +E_{11}) + (q^\infty)^* \diag(\lambda_2,\ldots, \lambda_n)q^\infty + (q^\infty)^* E_{21} + E_{12}q^\infty + (q^\infty)^* E_{22}q^\infty,
\end{align*}
where we used the notation in (\ref{eqn::EblockMat}). Since $q^\infty$ is a solution to the quadratic equations (\ref{eqn::quad}), we have
\begin{equation*}
(\lambda_1 + E_{11}) \| q^\infty \|_2^2 - (q^\infty)^* (\diag(\lambda_2,\ldots, \lambda_n) + E_{22}) q^\infty = (q^\infty)^* E_{21} - \| q^\infty \|_2^2 E_{12}q^\infty.
\end{equation*}
After rearrangement and substitution, we can simplify:
\begin{equation*}
( u + U_\bot q^\infty)^* (A+E) ( u + U_\bot q^\infty) = (1 + \| q^\infty \|_2^2) \big( (\lambda_1 + E_{11}) + E_{12}q^\infty \big).
\end{equation*}
Dividing both sides by $1 + \| q^\infty \|_2^2$ yields (\ref{eqn::tildeA1}). To prove (i), we need to show $|E_{11} + E_{12}q^\infty| < ( \lambda_1-\lambda_2)/2$. Lemma \ref{lem::indct} says that with probability $1 - O(n^{-2})$, $\| D^{-1} E_{21} \|_{p'} \le 1/2$, so by H\"{o}lder's inequality
\begin{equation*}
| E_{12}q^\infty | = |(D^{-1}E_{21})^* (Dq^\infty) | \le \| D^{-1} E_{21} \|_{p'} \| D q^\infty \|_p \le n^{1/p} \| Dq^\infty \|_\infty / 2.
\end{equation*}
Recall that in Theorem \ref{thm::indctQs}, we established $\| D q^\infty \|_\infty \le 3\kappa_2 \sqrt{\log n}$; and by Lemma \ref{lem::indct} part (i), $ | E_{11} | \le C_0 \sqrt{\log n}$, where both bounds succeed with probability at least $1 - O(n^{-1})$. Therefore, 
\begin{equation}\label{ineqn::match4}
| E_{11} + E_{12}q^\infty | \le C_0 \sqrt{\log n} + 3\kappa_2 \sqrt{\log n}\, n^{1/p} / 2 < \delta/2,
\end{equation}
with sufficiently small constant $c_0$ in Assumption \ref{assmp::lambda}. This implies $\tilde{\lambda} > \lambda_1 - \delta / 2 = (\lambda_1 + \lambda_2)/2$. \\
%
%To prove the claim (ii), we recall the $(\tilde{u}, \tilde{U}_\bot)$ forms an orthogonal basis of $\bb{C}^n$, so any vector orthogonal to $\tilde{u}$ lies in the span of column vectors of $\tilde{U}_\bot$. Thus, to show (ii), it is equivalent to show for any unit vector $y \in \bb{C}^{n-1}$,
%\begin{equation*}
%y^* \tilde{U}_\bot ^* \tilde{A}  \tilde{U}_\bot y < \lambda_2 + \delta/2,
%\end{equation*}
%or equivalently $  \tilde{U}_\bot ^* \tilde{A}  \tilde{U}_\bot \prec (\lambda_2 + \delta/2) I_{n-1}$. Let us expand  $  \tilde{U}_\bot ^* \tilde{A}  \tilde{U}_\bot$ according to (\ref{eqn::tildeuBot}),
%\begin{align*}
% \tilde{U}_\bot ^* \tilde{A}  \tilde{U}_\bot 
%\end{align*} 

To prove claim (ii), we will show that with probability $1 - O(n^{-1})$, for any $a \in \bb{C}$ with $|a| \le \sqrt{2}/4$ and any unit vector $x \in \bb{S}^{n-2}$, the unit vector 
\begin{equation}\label{eqn::zform}
z = au_1 + \sqrt{ 1 - |a|^2 }\, U_\bot x
\end{equation}
satisfies $z^* \tilde{A} z \le \lambda_2 + \delta/2$. This readily implies our claim (ii), because $\|q^\infty \|_2 \le 1/4$ by Corollary \ref{cor::q2norm}. In fact, for any $z$ of the form (\ref{eqn::zform}) with $z \bot \tilde{u}$,
\begin{equation*}
\langle \tilde{u}, z \rangle = 0 ~ \Leftrightarrow ~ a + \sqrt{1 - |a|^2} \langle q^\infty, x \rangle = 0  ~ \Rightarrow  ~ |a| (1 - |a|^2)^{-1/2} \le \|q^\infty \|_2.
\end{equation*}
So it must satisfy $|a| (1 - |a|^2)^{-1/2} \le 1/4$, and thus $|a| \le \sqrt{2}/4$. Hence, we only need to bound $z^* \tilde{A} z$ for those $z \in \bb{S}^{n-1}$ with $|a| \le \sqrt{2}/4$. 

Before manipulating $z^* \tilde{A} z$, let us make a useful observation: when we replace $z$ with $e^{i \theta}z$ $(\theta \in \bb{R})$, the value of $z^* \tilde{A} z $ does not change, so we can assume without loss of generality that $a \in \bb{R}^+$. Now we expand $z^* \tilde{A} z$,
\begin{align*}
& ~~~~ z^* ( A + E ) z \\
&= a^2 (\lambda_1 + E_{11}) + (1 - a^2)x^* \big( \diag(\lambda_2,\ldots,\lambda_n)+E_{22} \big) x +  2 a \sqrt{1 - a^2}\, \Re(E_{12}x) \\
&= a^2 \lambda_2 +( 1 - a^2) \left( x^* \big( \diag(\lambda_2,\ldots,\lambda_n)+E_{22} \big) x + 2a/\sqrt{1-a^2}\,  \Re(E_{12}x) \right)   \\
&+  a^2 (\delta + E_{11}).
\end{align*}
From Lemma \ref{lem::indct} part (i), with probability $1 - O(n^{-4})$ we have $|E_{11}| \le C_0\sqrt{\log n}$, which is smaller than $\delta$ for suitable choice of constant $c_0$. Thus the last term above has an upper bound $a^2 (\delta + E_{11}) \le \delta / 4$. If we can show that, with probability $1 - O(n^{-1})$, for any $a \in [0,\sqrt{2}/4]$ and unit vector $x \in \bb{S}^{n-1}$,
\begin{equation}\label{ineqn::opta}
x^* \big( \diag(\lambda_2,\ldots,\lambda_n)+E_{22} \big) x + 2a/\sqrt{1-a^2}\,  \Re(E_{12}x) \le  \lambda_2 + \delta/4, 
\end{equation}
then the desired bound $z^* \tilde{A} z \le \lambda_2 + \delta/2$ follows immediately. Let us write $\tau_a = 2a/\sqrt{1-a^2}$ and $D_\mu = \diag(\mu_1,\ldots, \mu_{n-1})$ where $\mu_j = \delta/4 + \lambda_2 - \lambda_{1+j}$. Since $\| x \|_2 = 1$, we can rearrange (\ref{ineqn::opta}) into an equivalent form:
\begin{equation}\label{ineqn::muPtb}
x^* E_{22} x + \tau_a \| x \|_2 \Re(E_{12}x) \le x^* D_\mu x.
\end{equation}
Note that the left-hand side of (\ref{ineqn::opta}) attains maximum if and only if $a$ is one of the two endpoints of $[0,\sqrt{2}/4]$. This says we only need to apply Theorem \ref{thm::eigvalPtb} for $a=0$ and $a = \sqrt{2}/4$. To do so, we check its condition: since $\mu_j = \delta/4 + \lambda_2 - \lambda_{1+j} \ge (\lambda_1 - \lambda_{1+j})/4$, we have
\begin{equation*}
\| \ddiag(D_\lambda^{-1}) \|_{p/(p-2)} \ge \| \ddiag(D_\mu^{-1}) \|_{p/(p-2)}/4.
\end{equation*}
We require the absolute constant $c_0 \le c_0'/4$, where constant $c'_0$ appears in Assumption \ref{assmp::mu}. With this choice the condition for Theorem \ref{thm::eigvalPtb} is satisfied. Applying Theorem \ref{thm::eigvalPtb} twice, for $a = 0$ and $a = \sqrt{2}/4$, we deduce that (\ref{ineqn::muPtb}) holds with probability $1 - O(n^{-1})$ for any $x \in \bb{C}^{n-1}$ and $a \in [0, \sqrt{2}/4 ]$. This completes the proof of claim (ii).

\end{proof}

\section*{Acknowledgement}
The author would like to thank Jianqing Fan, Ramon van Handel, Jiaoyang Huang, Kaizheng Wang, Ziquan Zhuang, and Yantao Wu for helpful discussions.

\bibliographystyle{ims}
\bibliography{references}

\end{document}